\documentclass[a4paper]{article}

    \usepackage{geometry}
    \usepackage{amsfonts,amsmath,amsthm,amssymb,mathtools}
    \usepackage{graphicx}
    \usepackage{mathrsfs}
    \usepackage{lmodern}
    \usepackage[shortlabels]{enumitem}
    \usepackage{bm}
    \usepackage{eucal}
    \usepackage{tikz,tikz-cd}
    \usepackage{subcaption}
    
    \definecolor{allrefcolors}{rgb}{.05,.45,.6}
    \usepackage[colorlinks=true,allcolors=allrefcolors]{hyperref}
    \usepackage[nameinlink,noabbrev,capitalise]{cleveref}
    
    \usetikzlibrary{calc,intersections}
    \usetikzlibrary{decorations.markings,arrows}
    \usetikzlibrary{shapes.geometric}

    \tikzset{-<-/.style={decoration={
    	markings,
    	mark=at position 0.5 with {\arrow{<}}},postaction={decorate}}
    	}
    \tikzset{->-/.style={decoration={
    	markings,
    	mark=at position 0.5 with {\arrow{>}}},postaction={decorate}}
    	}

    \setcounter{MaxMatrixCols}{11}
    \newsavebox\CBox
    \newcommand\hcancel[2][0.5pt]{%
    	\ifmmode\sbox\CBox{$#2$}\else\sbox\CBox{#2}\fi%
    	\makebox[0pt][l]{\usebox\CBox}%
    	\rule[0.5\ht\CBox-#1/2]{\wd\CBox}{#1}}
    
    \newcommand{\hooklongrightarrow}{\lhook\joinrel\longrightarrow}
    
    \DeclareMathOperator*{\hocolim}{hocolim}
    \DeclareMathOperator*{\holim}{holim}
    \DeclareMathOperator{\Rhom}{Rhom}
    
    \newtheorem{theorem}{Theorem}
    \newtheorem{corollary}[theorem]{Corollary}
    \newtheorem{definition}[theorem]{Definition}
    \newtheorem{lemma}[theorem]{Lemma}
    \newtheorem{proposition}[theorem]{Proposition}

    \newtheorem{question}[theorem]{Question}
    \newtheorem{remark}[theorem]{Remark}

    \crefname{theorem}{Theorem}{Theorems}
    \crefname{corollary}{Corollary}{Corollaries}
    \crefname{definition}{Definition}{Definitions}
    \crefname{lemma}{Lemma}{Lemmas}
    \crefname{proposition}{Proposition}{Propositions}
    \crefname{assumption}{Assumption}{Assumptions}
    \crefname{conjecture}{Conjecture}{Conjectures}
    \crefname{question}{Question}{Questions}
    \crefname{remark}{Remark}{Remarks}
    \crefname{example}{Example}{Examples}
    \crefname{claim}{Claim}{Claims}
    \crefname{section}{Section}{Sections}
    \crefname{figure}{Figure}{Figures}

    \AddToHook{env/proposition/begin}{\crefalias{theorem}{proposition}}
    \AddToHook{env/lemma/begin}{\crefalias{theorem}{lemma}}
    \AddToHook{env/corollary/begin}{\crefalias{theorem}{corollary}}
    \AddToHook{env/definition/begin}{\crefalias{theorem}{definition}}
    \AddToHook{env/example/begin}{\crefalias{theorem}{example}}
    \AddToHook{env/notation/begin}{\crefalias{theorem}{notation}}
    \AddToHook{env/assumption/begin}{\crefalias{theorem}{assumption}}
    \AddToHook{env/remark/begin}{\crefalias{theorem}{remark}}
    \AddToHook{env/claim/begin}{\crefalias{theorem}{claim}}
    \AddToHook{env/conjecture/begin}{\crefalias{theorem}{conjecture}}
    \AddToHook{env/question/begin}{\crefalias{theorem}{question}}
    
    \numberwithin{equation}{section}
    \numberwithin{theorem}{section}
    
    \begin{document}
    	
    \title{\textbf{Persistence of unknottedness of clean Lagrangian intersections}}\author{Johan Asplund
    	\and
    	Yin Li
    }
    \newcommand{\Addresses}{{
    		\bigskip
    		\footnotesize
    		
    		Johan Asplund, \textsc{Department of Mathematics, Stony Brook University, Stony Brook, NY 11794, United States}\par\nopagebreak
    		\textit{E-mail address}: \texttt{johan.asplund@stonybrook.edu}
    		
    		\medskip
    		
    		Yin Li, \textsc{Department of Mathematics, Uppsala University, 753 10 Uppsala, Sweden}\par\nopagebreak
    		\textit{E-mail address}: \texttt{yin.li@math.uu.se}
    		
    }}
    \date{}\maketitle
    \begin{abstract}
    Let $Q_0$ and $Q_1$ be two Lagrangian spheres in a $6$-dimensional symplectic manifold. Assume that $Q_0$ and $Q_1$ intersect cleanly along a circle that is unknotted in both $Q_0$ and $Q_1$. We prove that there is no nearby Hamiltonian isotopy of $Q_0$ and $Q_1$ to a pair of Lagrangian spheres meeting cleanly along a circle that is knotted in either component, answering a question of Smith. The proof is based on a classification of the spherical summands in the prime decomposition of an exact Lagrangian in the Stein neighborhood of the union $Q_0\cup Q_1$ and the deep result that lens space rational Dehn surgeries characterize the unknot.
    \end{abstract}
    
    \section{Introduction}
    
    \subsection{Summary of results}
    Let $(M,\omega)$ be a symplectic manifold, and $L \subset M$ a Lagrangian subset. A Hamiltonian isotopy in $M$ is said to be a \textit{nearby Hamiltonian isotopy} of $L$ if it is supported in a Stein neighborhood of $L$. A Hamiltonian diffeomorphism is said to be \textit{nearby} if it is the time-$1$ flow of a nearby Hamiltonian isotopy.
    
    The purpose of this paper is to prove the following result
    \begin{theorem}\label{theorem:main_intro}
        Let $(M,\omega)$ be a $6$-dimensional symplectic manifold, and let $Q_0, Q_1 \subset M$ be two Lagrangian spheres that intersect cleanly along a circle that is unknotted in both $Q_0$ and $Q_1$. There is no nearby Hamiltonian diffeomorphism $\varphi$ of $Q_0 \cup Q_1$ such that the intersection $Q_0 \cap \varphi(Q_1)$ is clean and is a knotted circle in either $Q_0$ or $Q_1$.
    \end{theorem}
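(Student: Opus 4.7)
The plan is to argue by contradiction, reducing the nonexistence statement to a topological constraint on the intersection circle by combining Polterovich's Lagrangian surgery, the prime-decomposition classification announced in the abstract, and the Kronheimer--Mrowka--Ozsv\'ath--Szab\'o theorem on lens space surgeries.

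Suppose for contradiction that a nearby Hamiltonian diffeomorphism $\varphi$ exists with $C := Q_0 \cap \varphi(Q_1)$ clean and knotted in, say, $Q_0 \cong S^3$. Fix a Stein neighborhood $W$ of $Q_0 \cup Q_1$; since $\varphi$ is nearby, $\varphi(Q_1) \subset W$, so one may work with the pair $(Q_0, \varphi(Q_1)) \subset W$. The symplectomorphism type of $W$ is fixed \emph{a priori} by the original unknotted clean intersection of $Q_0$ and $Q_1$, so any Fukaya-categorical invariant of $W$ is determined in advance by the unknotted model.

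Next, apply Polterovich's Lagrangian surgery along $C$ to smooth the intersection $Q_0 \cup \varphi(Q_1)$, producing an exact Lagrangian $3$-manifold $L \subset W$. Smoothly, $L$ is obtained by gluing $Q_0 \setminus \nu(C)$ to $\varphi(Q_1) \setminus \nu(C)$ along their torus boundaries, which is a rational Dehn surgery on $C \subset S^3$; in particular the diffeomorphism type of $L$ faithfully records the knot type of $C$.

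Finally, invoke the paper's classification of spherical summands in the prime decomposition of exact Lagrangians in $W$: each such summand must be a lens space. Hence $L$ is a connect sum of lens spaces, and the Kronheimer--Mrowka--Ozsv\'ath--Szab\'o theorem forces $C$ to be the unknot, contradicting the assumption. The principal obstacle, and technical heart of the argument, is establishing this classification of spherical prime summands, which amounts to a Fukaya-categorical analysis of exact Lagrangians in the Stein neighborhood $W$ and is the deep Floer-theoretic input underlying \Cref{theorem:main_intro}.
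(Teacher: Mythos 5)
Your overall strategy---reduce to the model Stein neighborhood $W_k$ of the unknotted configuration, smooth the new clean intersection by Lagrangian surgery, classify the prime summands of the resulting exact Lagrangian $L$, and feed the answer into Kronheimer--Mrowka--Ozsv\'ath--Szab\'o---is indeed the paper's strategy for \cref{theorem:main}. However, two of your steps would fail as stated. First, the classification is not ``every prime summand is a lens space'': \cref{lemma:classification} also permits $S^1\times S^2$ summands and prism-manifold summands, and these must be excluded by a separate argument. The paper does this by computing the Massey products of the Fukaya algebra of the two spheres (\cref{lemma:lambda}), which shows both that the gluing parameter of the new clean intersection is again $k$ (hence the surgery slope is the integer $k$---something you also need in order to apply KMOS) and that $H^1(L;\mathbb{Z})\cong\mathbb{Z}_k$; the latter rules out $S^1\times S^2$ summands and, since $H_1$ of a prism manifold is never cyclic, prism summands.

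Second, and more seriously, the deduction ``$L$ is a connected sum of lens spaces, so KMOS forces $C$ to be the unknot'' is invalid. The KMOS theorem characterizes the unknot among knots whose $p/q$-surgery is diffeomorphic to $S^3_{p/q}(U)$; it says nothing about reducible surgeries, and connected sums of two lens spaces genuinely arise from integral surgery on nontrivial knots (e.g.\ $pq$-surgery on the $(p,q)$-torus knot yields $L(p,q)\#L(q,p)$). The paper must therefore eliminate the connected-sum case by a different mechanism: it transfers the cyclic quasi-dilation to the classifying space of $\pi_1(K_0)\ast\pi_1(K_1)$ and derives a contradiction from the triviality of the center of the group algebra of a free product. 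Relatedly, your description of $L$ as a Dehn surgery on $C\subset S^3$ only applies when $C$ is unknotted in one of the two spheres; if it is knotted in both, $L$ is a splice of two nontrivial knot complements, hence toroidal, and the paper handles this case separately via \cref{proposition:cover} and \cref{lemma:aspherical}, which force $L$ to be spherical and yield a contradiction with the incompressible torus.
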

    
    This result answers the following question of Ivan Smith in the negative, in the case where $Q_0\cap Q_1$ is an unknot and the Hamiltonian isotopies are nearby.
    \begin{question}[{\cite[Question 1.5]{gp}}]
    In a $6$-dimensional symplectic manifold $M$, if a pair of Lagrangian spheres $Q_0$ and $Q_1$ intersect cleanly along a circle, can the isotopy class of the knot $Q_0\cap Q_1$ change under (nearby) Hamiltonian isotopies in $Q_0$ or $Q_1$?
    \end{question}
    
    With the setting as in \cref{theorem:main_intro}, let $Z \coloneqq  Q_0 \cap Q_1$. Let $\nu_{Z/Q_i}$ denote the normal bundle of $Z \subset Q_i$ for $i\in \{0,1\}$. Fixing an identification $\eta\colon\nu_{Z/Q_0}\xrightarrow{\cong}\nu_{Z/Q_1}$ of the normal bundles of $Z$ in $Q_0$ and $Q_1$ respectively, we construct a Stein manifold $W_\eta$ by plumbing $T^\ast Q_0$ and $T^\ast Q_1$ along $Z$, see \cref{section:plumbings} for details. The possible choices of $\eta$ are identified with $\mathbb{Z}$ and we normalize so that the Lagrangian surgery of $Q_0$ and $Q_1$ along $Z$ in $W_k$ is diffeomorphic to $S^1\times S^2$ if $k = 0$ and the lens space $L(k,1)$ for $k\geq 1$. Since any Stein neighborhood of $Q_0\cup Q_1\subset M$ is symplectically equivalent to some $W_k$, our main result \cref{theorem:main_intro} follows from the following result.
    
    \begin{theorem}[{\cref{theorem:main}}]\label{theorem:unknot}
    Let $k\in\mathbb{Z}_{\geq0}$ be any non-negative integer. There is no Hamiltonian isotopy of the core spheres $Q_0,Q_1\subset W_k$ to a pair of exact Lagrangian spheres meeting cleanly in a circle that is knotted in either component.
    \end{theorem}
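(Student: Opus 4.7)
Suppose for contradiction that $\varphi$ is a nearby Hamiltonian diffeomorphism of $Q_0\cup Q_1\subset W_k$ with $Z'\coloneqq Q_0\cap\varphi(Q_1)$ a clean circle that is knotted in at least one of $Q_0,\varphi(Q_1)$. My plan is to compare the exact Lagrangians obtained by Lagrangian surgery along $Z$ and along $Z'$. Let $L\subset W_k$ denote the closed exact Lagrangian produced by Lagrangian surgery of $Q_0\cup Q_1$ along $Z$, and let $L'\subset W_k$ denote the one produced by surgery of $Q_0\cup\varphi(Q_1)$ along $Z'$. Since Lagrangian surgery along a clean intersection is natural with respect to Hamiltonian isotopy of its input, $L$ and $L'$ are Hamiltonian isotopic and in particular diffeomorphic. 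The normalization recalled in the introduction identifies $L\cong S^1\times S^2$ when $k=0$ and $L\cong L(k,1)$ when $k\geq 1$; hence the same holds for $L'$.

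The second step is to compute the diffeomorphism type of $L'$ directly in terms of $Z'$. The local model of Lagrangian surgery along a clean circle in dimension three removes open tubular (solid-torus) neighborhoods of $Z'$ in each of $Q_0$ and $\varphi(Q_1)$ and glues the two resulting pieces along their boundary tori via the symplectic framing. Since $Q_0$ and $\varphi(Q_1)$ are $3$-spheres, $L'$ is realized as the splice of two knot complements $S^3\setminus N(Z'\subset Q_0)$ and $S^3\setminus N(Z'\subset\varphi(Q_1))$. If $Z'$ were knotted in both spheres, the splicing torus would be incompressible in $L'$ (boundary tori of non-trivial knot exteriors in $S^3$ are incompressible), contradicting the atoroidality of $L(k,1)$ and of $S^1\times S^2$. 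Thus $Z'$ must be unknotted in exactly one of the two spheres, say in $\varphi(Q_1)$; then $\varphi(Q_1)\setminus N(Z')$ is a solid torus, and $L'$ is realized as a rational Dehn surgery $S^3_r(K)$ on the knot $K\coloneqq(Z'\subset Q_0)$, where the slope $r\in\mathbb{Q}$ is determined by the symplectic framing data of the plumbing $W_k$ and so depends only on $k$.

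The final step is to invoke the deep characterization of the unknot by its Dehn surgeries. Since the slope $r$ is dictated solely by $k$, the unknotted reference case yields $S^3_r(U)\cong L$, hence $S^3_r(K)\cong S^3_r(U)$. By the theorem of Kronheimer--Mrowka--Ozsv\'ath--Szab\'o that lens-space rational Dehn surgeries characterize the unknot (and, for $k=0$, Gabai's resolution of Property R), the knot $K$ must be the unknot, contradicting the hypothesis on $Z'$.

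The main obstacle I anticipate is the passage from the \emph{diffeomorphism type} of $L'$ to the identification $S^3_r(K)\cong S^3_r(U)$ with a canonical slope independent of $\varphi$. Hamiltonian invariance of Lagrangian surgery must be used not merely up to diffeomorphism but in a way that transports the symplectic framing data, and one must rule out the a priori possibility that $L'$ is a non-trivial connected sum whose spherical prime summand only accidentally matches $L(k,1)$. This is exactly the role I expect the paper's classification of the spherical summands in the prime decomposition of exact Lagrangians in $W_k$ to play: it should ensure that the prime piece against which we compare Dehn surgeries is rigidly pinned down, so that the KMOS input can be applied to the correct labeled surgery on $K$.
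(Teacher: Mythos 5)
Your overall shape (reduce to a Dehn surgery statement and invoke Kronheimer--Mrowka--Ozsv\'ath--Szab\'o) matches the paper's strategy, but the first step of your argument contains a genuine gap that, as written, makes the rest collapse. You claim that because Lagrangian surgery along a clean intersection is ``natural with respect to Hamiltonian isotopy of its input,'' the surgeries $L$ of $Q_0\cup Q_1$ along $Z$ and $L'$ of $Q_0\cup\varphi(Q_1)$ along $Z'$ are Hamiltonian isotopic, hence diffeomorphic, hence $L'\cong L(k,1)$ (or $S^1\times S^2$). This is unjustified and essentially begs the question. The isotopy moves only $Q_1$; at intermediate times the intersection with $Q_0$ need not be clean, so the surgery is not defined along the deformation, and there is no ambient symplectomorphism carrying the configuration $(Q_0,Q_1,Z)$ to $(Q_0,\varphi(Q_1),Z')$. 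If such naturality held, the theorem would follow in a few lines and none of the machinery of the paper would be needed. What Hamiltonian invariance actually gives is only that $Q_1$ and $\varphi(Q_1)$ are quasi-isomorphic objects of the compact Fukaya category, so Floer-theoretic invariants of the pair agree; the paper exploits exactly this via the Massey products of \cref{lemma:lambda}, which pin down only the \emph{integral cohomology} of the surgered manifold $K$ (a homology lens space with $H^1(K;\mathbb{Z})\cong\mathbb{Z}_k$) and the framing integer $n=k$ of the embedded plumbing $W_n(\kappa_0,\kappa_1)\subset W_k$ --- not its diffeomorphism type.

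Consequently the real content of the proof is precisely the part you defer to ``the role I expect the paper's classification to play'': one must show that the homology lens space $K$, which is a closed exact Lagrangian in $W_k$, is genuinely a lens space and not a prism manifold, a spherical manifold of type \textbf{T}, \textbf{O}, or \textbf{I}, a connected sum of lens spaces, or an aspherical manifold, all of which are consistent with the homological data. This is done via \cref{lemma:aspherical,proposition:TOI,proposition:lensprism}, which rest on the existence of a cyclic quasi-dilation over every field (\cref{lemma:cyclic-d}), which in turn rests on the wrapped Fukaya category computation of \cref{section:computation}. Your proposal names this input but does not integrate it: in your logic the classification is redundant (since you already ``know'' $L'\cong L(k,1)$), and once the false step is removed, the proposal does not supply the argument that replaces it. Two smaller points: the incompressible-torus contradiction in the doubly-knotted case cannot be run against ``atoroidality of $L(k,1)$'' for the same reason (the paper instead combines the incompressible torus with the domination result of \cref{proposition:cover} and \cref{lemma:aspherical}); and the surgery slope is not automatically ``determined by $k$'' --- identifying $n=k$ is itself a consequence of \cref{lemma:lambda}.
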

    \begin{remark}
        \begin{enumerate}
            \item \Cref{theorem:unknot} was proven in the case $k = 0$ by Smith--Wemyss \cite[Proposition 4.10]{sw} and in the case $k = 1$ by Ganatra--Pomerleano \cite[Proposition 6.29]{gp}.
            \item Okamoto \cite{yo} studies the rigidity of knot types under Hamiltonian isotopies using augmentation varieties associated to Legendrian knots and deduced closely related results.
        \end{enumerate}
    \end{remark}
    
    As we shall explain in \cref{section:outline_of_proof}, part of the proof of \cref{theorem:unknot} consists of a direct computation of the wrapped Fukaya category of $W_k$ for any $k\geq 0$. This computation also yields new instances of homological mirror symmetry that we now describe.
    
    Let $\mathbb{K}$ be any field. The mirror of $W_k$ is a crepant (partial) resolution $Y_k\rightarrow\mathit{Spec}(R_k)$ of the compound $A_2$ singularity
    \begin{equation}\label{eq:sing}
    R_k=\frac{\mathbb{K}[u,v,x,y]}{\left(uv-xy\left((x+1)^k+y-1\right)\right)}, \quad k\geq1
    \end{equation}
    at the origin, with the exceptional locus consisting of a pair of $(-1,-1)$ rational curves meeting at a single point, see \cref{section:contraction} for details. The resolution $Y_k$ is a normal algebraic variety which may be smooth or has isolated singularities, depending on $k$. Associated to the resolution $Y_k$ is a non-positively graded dg algebra $\mathcal{A}_k$, called the \textit{derived contraction algebra}, which prorepresents the derived deformations of the irreducible components of the reduced exceptional fiber of the contraction. Its zeroth cohomology $H^0(\mathcal{A}_k)$ recovers the contraction algebra introduced by Donovan--Wemyss \cite{dw}. See \cite{mb} for its construction and \cref{section:contraction} for a convenient model which works in our case. There is an equivalence
    \[
    D^\mathit{perf}(\mathcal{A}_k)\cong D^b\mathit{Coh}(Y_k)/\langle\mathcal{O}_{Y_k}\rangle
    \]
    between the derived category of perfect modules over $\mathcal{A}_k$ and the \textit{relative singularity category} studied by Kalck--Yang \cite{ky}. By identifying $\mathcal{A}_k$ with the Ginzburg dg algebra $\mathcal{G}_k$ in the statement of \cref{theorem:contraction}, we prove in \cref{section:contraction} the following
    
    \begin{theorem}[{\cref{theorem:MS}}]\label{theorem:MS_intro}
    Let $\mathbb{K}$ be any field and $k\geq1$. There is an equivalence
    \[
    D^\mathit{perf}\mathcal{W}(W_k;\mathbb{K})\cong D^b\mathit{Coh}(Y_k)/\langle\mathcal{O}_{Y_k}\rangle
    \]
    between the derived wrapped Fukaya category of $W_k$ and the relative singularity category of its mirror.
    \end{theorem}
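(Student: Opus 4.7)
The plan is to realize $D^\mathit{perf}\mathcal{W}(W_k;\mathbb{K})$ as perfect modules over an explicit dg algebra $\mathcal{B}_k$, to quasi-isomorphically identify $\mathcal{B}_k$ with the Ginzburg dg algebra $\mathcal{G}_k$, and then to invoke the identification $\mathcal{G}_k\simeq\mathcal{A}_k$ from \cref{theorem:contraction} together with the Kalck--Yang equivalence $D^\mathit{perf}(\mathcal{A}_k)\cong D^b\mathit{Coh}(Y_k)/\langle\mathcal{O}_{Y_k}\rangle$. Chaining these three equivalences gives the statement.

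First I would pick generic points $q_i\in Q_i\setminus Z$ for $i\in\{0,1\}$ and take the cotangent fibers $F_0,F_1\subset W_k$ at $q_0,q_1$ as a generating set for the wrapped Fukaya category. Generation follows from Chantraine--Dimitroglou Rizell--Ghiggini--Golovko and Ganatra--Pardon--Shende applied to a Weinstein handle decomposition of $W_k$ in which $F_0$ and $F_1$ are Hamiltonian isotopic to the cocores of the two top handles attached along Legendrian lifts of $Q_0$ and $Q_1$ above the isotropic circle~$Z$. Setting $\mathcal{B}_k\coloneqq\mathrm{end}_{\mathcal{W}(W_k;\mathbb{K})}(F_0\oplus F_1)$ therefore yields $D^\mathit{perf}\mathcal{W}(W_k;\mathbb{K})\cong D^\mathit{perf}(\mathcal{B}_k)$, and the problem is reduced to a quasi-isomorphism $\mathcal{B}_k\simeq\mathcal{G}_k$.

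Next, I would compute $\mathcal{B}_k$ via the Legendrian surgery formula of Bourgeois--Ekholm--Eliashberg and Ekholm--Lekili, which expresses the endomorphism algebra of the cocores as the Chekanov--Eliashberg dg algebra of the attaching Legendrian link in the contact boundary of the subcritical part of $W_k$. The two vertices of the quiver correspond to $F_0,F_1$, and the arrows arise from the Reeb chords: the short chords produced by the clean circle intersection $Z$ yield two pairs of arrows between the vertices of appropriate degrees, while the Morse chords on each sphere contribute the remaining loop generators. The parameter $k$, which records the relative framing $\eta$ of $\nu_{Z/Q_0}$ and $\nu_{Z/Q_1}$, enters the differential through the count of rigid holomorphic strips and disks whose boundary wraps~$Z$. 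Because $W_k$ is a $3$-dimensional Weinstein manifold with trivial canonical bundle, the resulting dg algebra inherits a cyclic $3$-Calabi--Yau structure, so the output is of Ginzburg type.

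The main obstacle will be matching $\mathcal{B}_k$ with $\mathcal{G}_k$ as $A_\infty$-algebras rather than merely as graded algebras, since the higher Massey products on $\mathcal{B}_k$ are encoded by multi-level disk counts that are hard to access directly. To sidestep a term-by-term computation, I would argue as follows: identify the underlying graded quiver and the quadratic/cubic part of the potential by a localized computation near $Z$, and then appeal to the uniqueness of $3$-Calabi--Yau completions of a finite-dimensional formal model (in the sense of Keller) with prescribed truncated potential. This forces $\mathcal{B}_k\simeq\mathcal{G}_k$ and, upon combining with \cref{theorem:contraction} and Kalck--Yang, completes the proof.
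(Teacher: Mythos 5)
Your overall architecture (generate by the two cocores, compute their endomorphism algebra, identify it with $\mathcal{G}_k\simeq\mathcal{A}_k$, then pass to $D^b\mathit{Coh}(Y_k)/\langle\mathcal{O}_{Y_k}\rangle$) is reasonable, and the paper itself remarks that a Legendrian-surgery computation of the cocore algebra would give an alternative proof of \cref{theorem:ginzburg}. But the step where you actually pin down the dg algebra contains a genuine gap. You propose to determine only the graded quiver and the quadratic/cubic truncation of the potential ``by a localized computation near $Z$'' and then invoke uniqueness of $3$-Calabi--Yau completions with prescribed truncated potential. This cannot work: such uniqueness statements (Van den Bergh's ``strong CY implies Ginzburg form,'' Keller's CY completions, Derksen--Weyman--Zelevinsky right-equivalence of potentials) live in the \emph{complete} (pseudo-compact/formal) setting, and in that setting the theorem you need is false in the relevant sense --- over a field of characteristic $0$ the completed Ginzburg algebras $\widehat{\mathcal{G}}_k$ are quasi-isomorphic to $\widehat{\mathcal{G}}_1$ for all $k\geq1$, because the lowest-order term of $w_k$ is a nonzero multiple of $(ef)^2$. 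So any argument that only sees a truncation of the potential, or only sees a neighborhood of $Z$, necessarily loses the parameter $k$, which is precisely the content of the theorem (the mirrors $Y_k$ are genuinely different varieties). The $k$-dependence sits in the higher-order terms of $w_k$ and, geometrically, in counts of holomorphic sections that wrap globally around the fibration and pick up holonomy $x_1^k$ --- in the paper these appear in \cref{proposition:g} and \cref{theorem:dga}; they are not local data near the clean intersection. Relatedly, your claim that the Chekanov--Eliashberg output ``inherits a cyclic $3$-CY structure, so is of Ginzburg type'' again only yields the completed Ginzburg algebra; in the paper the exact CY structure is \emph{deduced from} the identification with $\mathcal{G}_k$ (\cref{lemma:cyclic-d}), not used to establish it.

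For comparison, the paper takes a different computational route: it builds a Morse--Bott--Lefschetz fibration $\pi\colon W_k\to\mathbb{C}$ with three singular fibers, computes the fiberwise wrapped category \`a la Abouzaid--Auroux and then localizes, obtaining the explicit dg algebra of \cref{theorem:dga}; a change of variables matches this with the Evans--Lekili model of the derived contraction algebra $\mathcal{A}_k$ (\cref{lemma:wfuk_contraction}); finally the equivalence $D^\mathit{perf}(\mathcal{A}_k)\cong D^b\mathit{Coh}(Y_k)/\langle\mathcal{O}_{Y_k}\rangle$ is not a black-box citation but requires Van den Bergh's tilting bundle together with a verification that $Y_k$ is normal (it may be singular, being only a partial resolution) --- a point your appeal to ``Kalck--Yang'' also elides. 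To repair your approach you would need to carry out the actual holomorphic disk count in the surgery formula (or an equivalent global computation) that produces the full potential $w_k$, not just its leading term.
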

    \begin{remark}
    \begin{enumerate}
    \item Let $p$ be a prime and $n\in \mathbb{Z}_{\geq 1}$. If $k=p^n$ and $\mathbb{K}$ is a field of characteristic $p$, or when $k=1$ and $\mathbb{K}$ is any field, the singularity \eqref{eq:sing} becomes
    \begin{equation}\label{eq:sing1}
    \frac{\mathbb{K}[u,v,x,y]}{\left(uv-xy(x^k+y)\right)},\quad k\geq1.
    \end{equation}
    In this case, \cref{theorem:MS_intro} reduces to \cite[Corollary 1.3]{sw}.
    \item By the work of Evans--Lekili \cite{evle}, \cref{theorem:MS_intro} also proves \cite[Conjecture E]{lese} in this particular case, see \cref{section:contraction}.
    \end{enumerate}
    \end{remark}
    
    Let $\mathit{PBr}_3$ denote the pure braid group on three strands, and let $\mathit{Symp}_\mathit{gr}(W_k)$ be the group of graded symplectomorphisms of $W_k$ (see \cite[Section 2.b]{sg}); the latter is a central $\mathbb Z$-extension of $\mathit{Symp}(W_k)$ and acts by autoequivalences on the compact Fukaya category. By \cite[Lemma 4.15]{sw} there is a natural representation 
    \[
        \rho_k \colon \mathit{PBr}_3\longrightarrow\pi_0\mathit{Symp}_\mathit{gr}(W_k)
    \]
    whose image is generated by $\tau_0$, $\tau_1$ and $\tau_0\tau_1^2 \tau_0^{-1}$, where $\tau_i$ is the Dehn twist on the core sphere $Q_i$ of $W_k$.
    
    \begin{corollary}
    For any $k\geq1$, the natural representation $\rho_k$ is faithful.
    \end{corollary}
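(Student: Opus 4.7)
The plan is to transport the question, via \cref{theorem:MS_intro}, into a faithfulness problem for a spherical twist braid representation on a $3$-Calabi-Yau Ginzburg dg algebra, where classical faithfulness results apply. Composing $\rho_k$ with the natural action of $\pi_0\mathit{Symp}_\mathit{gr}(W_k)$ on the derived wrapped Fukaya category, applying the equivalence of \cref{theorem:MS_intro}, and invoking the identification $\mathcal{A}_k\cong\mathcal{G}_k$ of \cref{theorem:contraction}, one obtains a representation
\[
\tilde\rho_k\colon\mathit{PBr}_3\longrightarrow\mathit{Auteq}\bigl(D^\mathit{perf}\mathcal{G}_k\bigr),
\]
which is faithful if and only if $\rho_k$ is. Under this transport, the Dehn twist $\tau_i$ along the core Lagrangian sphere $Q_i$ is intertwined with the Seidel--Thomas spherical twist $T_{E_i}$ along the simple $\mathcal{G}_k$-module $E_i$ at the $i$-th vertex.

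The pair $(E_0,E_1)$ forms a $3$-Calabi--Yau spherical collection of $A_2$ Dynkin type: on cohomology the $\mathrm{Ext}$-algebra satisfies $\mathrm{Ext}^1(E_i,E_j)=\mathbb{K}$ for $i\neq j$ and has no other non-trivial $\mathrm{Ext}$ groups in degrees $1$ or $2$. For any such spherical collection on a pretriangulated dg category, the full braid group $\mathit{Br}_3$ acts faithfully by spherical twists on the triangulated hull of $\{E_0,E_1\}$; in the formal case this is the Seidel--Thomas theorem, while the non-formal $3$-Calabi--Yau Ginzburg setting is covered by Keller--Yang and Qiu. Since the image of $\tilde\rho_k$ sits inside the subgroup of $\mathit{Auteq}\bigl(D^\mathit{perf}\mathcal{G}_k\bigr)$ generated by $T_{E_0}$ and $T_{E_1}$, and $\mathit{PBr}_3$ embeds in $\mathit{Br}_3$, faithfulness of $\tilde\rho_k$ follows.

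The main obstacle is the first reduction: showing that Dehn twists along Lagrangian spheres which meet cleanly along a circle correspond under homological mirror symmetry to Seidel--Thomas spherical twists along the simples. A Picard--Lefschetz analysis is standard for transverse intersections, but in our clean circle setting the identification must be verified directly inside the wrapped Fukaya category. The case $k=1$ is essentially treated by Smith--Wemyss \cite{sw}, and I would extend their argument to all $k\geq 1$ by observing that the local model near the intersection circle is $k$-independent; only the global Calabi--Yau potential on $\mathcal{G}_k$ varies with $k$, and this variation does not affect the identification of Dehn twists with spherical twists along the simples $E_0$ and $E_1$.
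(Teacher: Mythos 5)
Your overall strategy---transport $\rho_k$ through \cref{theorem:MS_intro} and \cref{theorem:contraction} to an action by twist functors on $D^\mathit{perf}\mathcal{G}_k$ and then quote a known faithfulness theorem---is the same in spirit as the paper's proof, which simply defers to \cite[Corollary 1.5]{sw} once \cref{theorem:MS_intro} is known for all $k\geq 1$. However, there is a genuine gap in the faithfulness input you invoke. The pair of simples $(E_0,E_1)$ over the Ginzburg dg algebra of the $2$-cycle quiver is \emph{not} an $A_2$ spherical collection: since the quiver has an arrow in each direction, one computes $\mathrm{Ext}^1_{\mathcal{G}_k}(E_0,E_1)\cong\mathbb{K}$ \emph{and} $\mathrm{Ext}^2_{\mathcal{G}_k}(E_0,E_1)\cong\mathbb{K}$, so $\mathrm{Ext}^\ast(E_0,E_1)$ is two-dimensional. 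This is forced by the geometry: the mirror statement is $HF^\ast(Q_0,Q_1)\cong H^{\ast-1}(Z;\mathbb{K})$ with $Z=S^1$ a clean circle intersection, which is rank two, not rank one as for transversely intersecting spheres. Consequently the hypotheses of the Seidel--Thomas $A_m$-configuration theorem (and of its non-formal Calabi--Yau refinements by Keller--Yang and Qiu, which also assume $\dim\mathrm{Ext}^\ast(E_i,E_j)=1$ for adjacent vertices) fail, and in fact $T_{E_0}$ and $T_{E_1}$ do not satisfy the braid relation here. This is precisely why the representation in question is of $\mathit{PBr}_3\cong F_2\times\mathbb{Z}$ on the three generators $\tau_0$, $\tau_1$, $\tau_0\tau_1^2\tau_0^{-1}$, rather than of $\mathit{Br}_2$ or $\mathit{Br}_3$ on two twists.

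The correct replacement for your second paragraph is the mechanism used in \cite{sw}: under the equivalence of \cref{theorem:MS_intro} the twists correspond to the flop/mutation functors of the two-curve flopping contraction $Y_k\to\mathit{Spec}(R_k)$, and faithfulness of the resulting $\mathit{PBr}_3$-action is the Hirano--Wemyss theorem on faithful actions of fundamental groups of complexified complements of (here, the $A_2$) hyperplane arrangements arising from flops. Your first reduction (matching Dehn twists with the algebraic twist functors) is indeed the step requiring care, and is carried out in \cite{sw}; but without swapping out Seidel--Thomas for the Hirano--Wemyss input, the argument does not close.
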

    \begin{proof}
        The proof is the same as the one of \cite[Corollary 1.5]{sw}. The conclusion holds for any $k\geq 1$ since \cref{theorem:MS_intro} holds for any $k\geq 1$.
    \end{proof}
    
    \subsection{Outline of the proof of the main result}\label{section:outline_of_proof}
    
    We now give an outline of the proof of \cref{theorem:unknot}.
    
    Roughly speaking, both of the proofs in \cite{sw} and \cite{gp} rely on the existence of \textit{dilations}, which is a degree $1$ class $b\in\mathit{SH}^1(M;\mathbb{K})$ in the symplectic cohomology of some Liouville manifold $M$ whose image under the BV (Batalin--Vilkovisky) operator $\Delta\colon \mathit{SH}^\ast(M;\mathbb{K})\rightarrow\mathit{SH}^{\ast-1}(M;\mathbb{K})$ gives the identity $1\in\mathit{SH}^0(M;\mathbb{K})$, see \cite{ps2,ss} for details. The existence of a dilation imposes strong restrictions on the topology of Lagrangian submanifolds in $M$, which in particular excludes certain Dehn surgeries, therefore also knot types on which the surgery is performed. When the BV operator hits an arbitrary invertible element $h\in\mathit{SH}^0(M;\mathbb{K})^\times$ instead of the identity, we say that $M$ admits a \textit{quasi-dilation}. In \cite[Lemma 4.9]{sw}, Smith and Wemyss proved that $W_0$ admits a quasi-dilation over a field $\mathbb{K}$ of characteristic zero by embedding it into the total space of a Lefschetz fibration whose smooth fibers are $4$-dimensional $A_3$ Milnor fibers. For $W_1$, Ganatra and Pomerleano proved the existence of a dilation in $\mathit{SH}^1(W_1;\mathbb{F}_3)$, where $\mathbb{F}_3$ is a field of order $3$, see \cite[Section 6.4]{gp}. However, when $k\geq2$, it is not clear whether $W_k$ admits a dilation over some field $\mathbb{K}$ due to the lack of affine realizations. 
    
    It turns out that in order to study the rigidity of knot types of $Z\subset W_k$, we can use a much weaker notion---a \emph{cyclic quasi-dilation}, introduced by the second author in \cite{yle}. It is defined using the $S^1$-equivariant symplectic cohomology $\mathit{SH}_{S^1}^\ast(M;\mathbb{K})$, and the marking map $\mathbf{B}\colon\mathit{SH}_{S^1}^1(M;\mathbb{K})\rightarrow\mathit{SH}^0(M;\mathbb{K})$ as a stand-in for the BV operator, see \cref{section:dilations} for details.
    
    More precisely, we show that the Stein manifold $W_k$ for any $k\geq 0$ admits a cyclic quasi-dilation over \textit{any} field $\mathbb{K}$, from which the following classification result of exact Lagrangians in $W_k$ follows (cf.\@ \cref{lemma:aspherical,proposition:TOI,proposition:lensprism}) after a detailed analysis of the fundamental group rings of its prime summands.
    
    \begin{lemma}\label{lemma:classification}
    Let $k \in \mathbb{Z}_{\geq 1}$, and $L\subset W_k$ is a closed oriented exact Lagrangian submanifold. Then any summand in the prime decomposition of $L$ is diffeomorphic to either:
    \begin{enumerate}
    \item $S^1\times S^2$,
    \item a lens space such that if $p$ is a prime factor of $|\pi_1(L)|$, then $p$ also divides $k$, or
    \item a prism manifold such that $\pi_1(L)/Z(\pi_1(L))\cong D_{2n}$ for some even number $n$.
    \end{enumerate}
    \end{lemma}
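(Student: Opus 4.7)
The plan is to combine the prime decomposition theorem for closed oriented $3$-manifolds with the topological rigidity coming from the cyclic quasi-dilation on $W_k$ that has been constructed earlier in the paper. The latter gives, through the closed-open string map into the loop homology of any closed exact Lagrangian $L\subset W_k$, an algebraic constraint on the group ring $\mathbb{K}[\pi_1(L)]$ that is valid over every field $\mathbb{K}$, and the three bullet points in the conclusion are exactly the prime $3$-manifolds whose fundamental group rings can satisfy such a constraint.

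First I would recall the prime decomposition theorem, writing $L$ uniquely as a connect sum of prime pieces belonging to one of three families: $S^1\times S^2$, aspherical primes (hyperbolic, toroidal, or infinite Seifert fibered), or spherical space forms. The spherical family further decomposes into lens spaces (cyclic $\pi_1$), prism manifolds (binary dihedral $\pi_1$), and tetrahedral, octahedral or icosahedral space forms (the remaining binary polyhedral cases). Item (1) of the lemma accounts for the first family, items (2) and (3) retain subsets of lens and prism summands, and the task is to exclude all the other possibilities.

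The next step is to invoke \cref{lemma:aspherical} to discard every aspherical summand: for such an $L'$, the Pontryagin algebra of $\Omega L'$ is infinite-dimensional over $\mathbb{K}$ in a way incompatible with the identity produced by the marking map of the cyclic quasi-dilation on $W_k$, so no connect sum containing an aspherical factor can embed as a closed exact Lagrangian in $W_k$. This leaves only $S^1\times S^2$ and spherical summands.

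Finally, \cref{proposition:TOI} rules out the binary polyhedral cases via a direct computation in $\mathbb{K}[\pi_1(L')]$ showing that the cyclic quasi-dilation equation has no solution in those group rings, regardless of the characteristic of $\mathbb{K}$. \Cref{proposition:lensprism} then analyzes the surviving cyclic and binary dihedral cases: the divisibility condition in item (2) arises because the equation in $\mathbb{K}[\mathbb{Z}/n]$ is solvable only when every prime factor of $n$ divides $k$, and the dihedral-quotient condition in item (3) drops out of a parity constraint on the binary dihedral group required for solvability. I expect the main obstacle to be this last group-ring analysis, which must be uniformly characteristic-free and simultaneously sharp enough to tie the torsion in $\pi_1(L)$ to the plumbing parameter $k$.
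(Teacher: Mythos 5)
Your proposal is correct and follows the same route as the paper: the lemma is exactly the combination of the prime decomposition theorem (with the standard trichotomy $S^1\times S^2$ / aspherical / spherical space form), \cref{lemma:aspherical} to kill aspherical summands, \cref{proposition:TOI} to kill the type \textbf{T}, \textbf{O}, \textbf{I} space forms, and \cref{proposition:lensprism} for the divisibility and parity constraints on the surviving lens and prism summands. Your paraphrases of the mechanisms behind those cited results are slightly loose (e.g.\@ the aspherical case runs through \"{O}inert's triviality of central units in group rings of torsion-free groups plus finite-dimensionality of $H^0(\mathcal{W}_k)$, not directly through the Pontryagin algebra), but since you are citing the already-established statements this does not affect the argument.
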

    
    \begin{remark}
    When $k=1$ or $k>2$ is a prime number $p$, there is no exact Lagrangian $S^1\times S^2$ with vanishing Maslov class in $W_k$ by \cite[Corollary 1.10]{sw}. Their proof is purely algebraic and relies on the classification of fat-spherical objects in the compact Fukaya category $\mathcal{F}(W_k;\mathbb{F}_p)$. Intuitively, one may expect that any prime exact Lagrangian submanifold $L\subset W_k$ is diffeomorphic either to a sphere or a lens space $L(k,1)$, although this may be out of the reach with current techniques.
    \end{remark}
    
    Now, \cref{theorem:unknot} is deduced as a consequence of \cref{lemma:classification} and the proof of Gordon's conjecture by Kronheimer--Mrowka--Ozsv\'{a}th--Szab\'{o} \cite[Theorem 1.1]{kmos}, see \cref{section:proof} for details.
    
    The existence of a cyclic quasi-dilation for $W_k$ for any $k\in \mathbb{Z}_{\geq 1}$ is proved via \cite[Proposition 4]{yle}, by showing that the wrapped Fukaya category $\mathcal{W}(W_k;\mathbb{K})$ admits an exact Calabi--Yau structure. This is done by showing that the wrapped Fukaya category is equivalent to perfect modules over a Ginzburg dg algebra which is known to admit an exact Calabi--Yau structure \cite[Theorem 4.3.8]{bd}. The wrapped Fukaya category of $W_k$ is generated by the two Lagrangian cocores $L_0$ and $L_1$ (i.e., the cotangent fibers over $Q_0$ and $Q_1$) \cite{cdrgg,gps}. Thus, it suffices to compute the endomorphism $A_\infty$-algebra of their union $L_0 \cup L_1$ 
    \[
    \mathcal{W}_k\coloneqq \bigoplus_{i,j \in \{0,1\}}\mathit{CW}^\ast(L_i,L_j).
    \]
    Our particular choices of $L_0$ and $L_1$ will be made precise in \cref{section:setup}. The computation of $\mathcal W_k$ uses a Morse--Bott--Lefschetz fibration
    \[
    \pi \colon W_k\longrightarrow\mathbb{C},
    \]
    and (slight modifications of) techniques of Abouzaid--Auroux \cite{aa}. It is carried out in \cref{section:computation}. The outcome is the following result.
    
    \begin{theorem}[{\cref{theorem:ginzburg}}]\label{theorem:contraction}
    Let $\mathbb{K}$ be any field and $k\geq1$. The Fukaya $A_\infty$-algebra $\mathcal{W}_k$ over the semisimple ring $\Bbbk=\mathbb{K}e_0\oplus\mathbb{K}e_1$ is quasi-isomorphic to the Ginzburg dg algebra $\mathcal{G}_k$ associated to the $2$-cycle quiver
    \[
    \begin{tikzcd}
    \bullet\arrow[r,bend left,"e"] & \bullet \arrow[l,bend left,"f"]
    \end{tikzcd}
    \]
    with potential
    \[
    w_k=efe\left(1+(fe+1)+\cdots+(fe+1)^{k-1}\right)f.
    \]
    \end{theorem}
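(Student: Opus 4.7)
The plan is to compute the Fukaya $A_\infty$-algebra $\mathcal{W}_k$ directly from a geometric model and identify it on the chain level with $\mathcal{G}_k$. The principal tool is the Morse--Bott--Lefschetz fibration $\pi\colon W_k \to \mathbb{C}$ introduced above, which permits a mild adaptation of Abouzaid--Auroux's techniques \cite{aa} for computing wrapped Fukaya categories of Liouville manifolds admitting such fibrations.

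First, I would identify generators. After positioning the Lagrangian cocores $L_0, L_1$ as in \cref{section:setup} so that each projects to a properly embedded arc in the base and is fibered over it by Lefschetz thimbles, an action-filtration analysis of Reeb chords at infinity identifies a basis of $\mathit{CW}^\ast(L_i, L_j)$ consisting of: the idempotents $e_0, e_1$; degree-zero chords $e$ and $f$ crossing the plumbing locus once in opposite directions; their Poincar\'e duals $e^\ast, f^\ast$ in degree $-1$; and a pair of Morse--Bott families at the clean intersection circle contributing degree $-2$ loops $t_0, t_1$. This yields a degree-preserving bijection between a basis of $\mathcal{W}_k$ over $\Bbbk$ and the generators of $\mathcal{G}_k$.

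Second, I would match the $A_\infty$-operations by enumerating pseudo-holomorphic disks. Disks whose projections remain in a regular fiber of $\pi$ reproduce the free path algebra $\Bbbk\langle e, f\rangle$ together with the standard $3$-Calabi--Yau relations $\mu^1(t_i) = e_i([e,e^\ast] + [f,f^\ast])e_i$. The potential-producing disks are those whose projections sweep across the critical values of $\pi$. The key geometric input is that parallel transport across each Morse--Bott critical locus acts on the relevant Floer groups by the operator $fe + 1$, so that for each $1 \le j \le k$, disks of winding number $j$ around the chain of singular fibers contribute $efe\cdot(fe+1)^{j-1}\cdot f$ to $\mu^1(e^\ast)$. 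Summing over $j$ produces exactly the cyclic derivative $\partial_e w_k$; the differential of $f^\ast$ is handled symmetrically.

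The main obstacle will be rigorously establishing the monodromy factor $(fe+1)$ and checking that higher $A_\infty$-products do not introduce additional terms. The first requires a Picard--Lefschetz-type computation in the clean intersection setting together with careful sign and orientation analysis in the $\mathbb{Z}$-graded Fukaya package; the second can be handled by a minimal-model argument appealing to the intrinsic $3$-Calabi--Yau structure of $\mathcal{G}_k$ and a degree count that forces all $\mu^{\geq 3}$ corrections to be formally absorbed into the Ginzburg differential. Combining these ingredients produces the desired quasi-isomorphism $\mathcal{W}_k \simeq \mathcal{G}_k$.
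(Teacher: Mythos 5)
Your overall strategy---compute $\mathcal{W}_k$ via the Morse--Bott--Lefschetz fibration $\pi$ and Abouzaid--Auroux-type arguments---is the right starting point and is what the paper does, but the proposal has a genuine gap at its core: the natural output of that computation is \emph{not} the Ginzburg presentation, and the step that bridges the two is missing. Concretely, the fibration technique produces a dg algebra generated over $\Bbbk$ by degree-$0$ arrows $a,b$, fiberwise monomial generators $x_1^{\pm1},x_2^{\pm1}$ (coming from wrapping in the $(\mathbb{C}^\ast)^2$-fibers), and degree $-1$ loops $\alpha,\beta$ with $d\alpha=(x_2+1)e_0$, $d\beta=(x_1^k+x_2)e_1$ and relations $ab=(x_1-1)e_0$, $ba=(x_1-1)e_1$ (\cref{theorem:dga}). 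There is no geometric generator playing the role of $e^\ast$, $f^\ast$, or $h_v$: in the actual quasi-isomorphism $\phi\colon\mathcal{G}_k\to\mathcal{A}_k\cong\mathcal{W}_k$ one has $e^\ast\mapsto\beta b-b\alpha$, $f^\ast\mapsto a\beta-\alpha a$ and, crucially, $h_0,h_1\mapsto 0$. So your claimed ``degree-preserving bijection'' between Reeb chords and Ginzburg generators---in particular the degree $-2$ Morse--Bott loops $t_0,t_1$ matching $h_0,h_1$---does not reflect how the identification works, and the asserted monodromy factor $(fe+1)$ only appears after the change of variables $x_1=ab+1$; stating it as the ``key geometric input'' begs the question rather than proving it.

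Two further steps cannot be waved through. First, the vanishing of higher $A_\infty$-operations is proved in the paper for the \emph{fiberwise} category $\mathcal{W}_{k,\pi}$, whose generators sit only in degrees $0$ and $-1$, by an open-mapping-theorem argument (\cref{lemma:dg}); once degree $-2$ elements are present (as in your generator list), a bare degree count no longer forces $\mu^{\geq3}=0$, and ``absorbing corrections into the Ginzburg differential'' is not a construction. Second, even granting an algebra map matching generators and relations, a quasi-isomorphism of non-positively graded dg algebras is not verified on generators alone: the paper first checks the induced map on $H^0$ and then propagates to all negative degrees by an induction exploiting the homological smoothness of $\mathcal{G}_k$ (via \cite[Proposition 2.5]{ky}), using that each $H^{-j-1}$ is an $H^0$ of a perfect module built from the diagonal bimodule. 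Some substitute for this argument (or for the $2$-periodicity of $H^\ast(\mathcal{A}_k)$) is needed and is absent from your proposal. A genuinely direct geometric route closer to what you envision does exist in principle---via the Chekanov--Eliashberg dg algebra and the surgery formula, as the paper remarks---but setting that up is a different argument from the one you sketch.
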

    
    \begin{remark}
    When $\mathrm{char}(\mathbb{K})=0$, since the lowest order term in the potential $w_k$ is some non-zero multiple of $(ef)^2$, the completion $\widehat{\mathcal{G}}_k$ of $\mathcal{G}_k$ is quasi-isomorphic to the complete Ginzburg dg algebra $\mathcal{G}_1\cong\widehat{\mathcal{G}}_1$ for any $k\geq1$. This is compatible with \cite[Theorem 1.1]{sw}, which shows that the Fukaya $A_\infty$-algebra $\mathcal{Q}_k$ of the two core spheres in the double bubble plumbing $W_k$ is always quasi-isomorphic to $\mathcal{Q}_1$ for $k\geq1$ and $\mathrm{char}(\mathbb{K})=0$. In fact, there is a quasi-isomorphism
    \[
    \mathcal{Q}_k\cong \Rhom_{\mathcal{W}_k}(\Bbbk,\Bbbk)
    \]
    due to Ekholm--Lekili \cite{ekle}.
    
    Also note that when $\mathbb K$ is a field of characteristic $p$ for some prime $p$, and $k=p^n$ for some $n \in \mathbb{Z}_{\geq 1}$, the potential $w_k$ becomes $(ef)^{k+1}$ over $\mathbb{K}$. In this case, our computation recovers the quiver with potential that appears in \cite[Section 2.3]{sw}.
    \end{remark}
    
    Identifying the wrapped Fukaya categories (or Chekanov--Eliashberg dg algebras) with Ginzburg dg algebras may be of some interest. Any (relative) Ginzburg dg algebra associated to a quiver with trivial potential is quasi-isomorphic to some Chekanov--Eliashberg dg algebra \cite[Theorem 1.1]{ar}, but very few such examples with \textit{non-trivial} potentials are known \cite[Theorem 1.2]{ylk}.
    
    \subsection*{Conventions}
    Throughout the paper we assume that $\mathbb K$ denotes an arbitrary field, although some constructions hold more generally over commutative rings.
    
    Since $c_1(W_k)=0$ for all $k\geq0$, all of our symplectic invariants, including the compact Fukaya category $\mathcal{F}(W_k;\mathbb{K})$, the wrapped Fukaya category $\mathcal{W}(W_k;\mathbb{K})$ and the symplectic cohomologies $\mathit{SH}^\ast(W_k;\mathbb{K})$ and $\mathit{SH}_{S^1}^\ast(W_k;\mathbb{K})$ considered in this paper will be $\mathbb{Z}$-graded. Moreover, all the Lagrangian submanifolds of $W_k$ considered in this paper will be exact, and cylindrical if they are non-compact, which allows us to define the compact and wrapped Fukaya categories over $\mathbb{Z}$, see for example \cite{gps1}. For an $A_\infty$-algebra $\mathcal{A}$, we shall write $b\cdot a$ or simply $ba$ for the product $\mu^2_\mathcal{A}(b,a)$.
    
    \subsection*{Outline}
        In \cref{section:computation} we compute the Fukaya $A_\infty$-algebra $\mathcal W_k$ and prove \cref{theorem:MS_intro}. In \cref{section:contraction_and_ginz} we relate $\mathcal W_k$ to a certain derived contraction algebra and a Ginzburg dg algebra. Finally, \cref{section:proof} is devoted to proving the classification result \cref{lemma:classification} and our main result \cref{theorem:main_intro}.
    
    \subsection*{Acknowledgments}
    
    The usages of dilations over fields of finite characteristics was asked by Mohammed Abouzaid during a conversation with the second author in 2022. He also benefited from discussions about central units in group rings on Mathematics Stack Exchange, and in particular indebted to Jeremy Rickard for \cref{remark:JR}. We thank Georgios Dimitroglou Rizell and Yank{\i} Lekili for many useful discussions and suggestions. We thank the anonymous referee who brought our attention to a gap in the earlier version of \cref{proposition:lensprism} and whose numerous comments improved the exposition of the paper.
    
    \section{The wrapped Fukaya category of double bubble plumbings}\label{section:computation}
    
    The goal of this section is to compute the wrapped Fukaya category of the double bubble plumbings $W_k$ for $k\geq 0$. We exploit the existence of a Morse--Bott--Lefschetz fibration $\pi \colon W_k\rightarrow\mathbb{C}$ whose associated fiberwise wrapped Fukaya category we compute following techniques introduced by Abouzaid--Auroux \cite{aa}. The wrapped Fukaya category of $W_k$ is obtained as a certain localization of the fiberwise wrapped Fukaya category.
    
    In \cref{section:plumbings} we introduce and give definitions of the double bubble plumbings $W_k$. In \cref{section:setup} we give a definition of the fiberwise wrapped Fukaya category for the Morse--Bott--Lefschetz fibration $\pi$. The computation of the fiberwise wrapped Fukaya category occupies \cref{section:single,section:product}, and in \cref{subsection:wrapping} we discuss the localization of the fiberwise wrapped Fukaya category of $W_k$.
    
    \subsection{Double bubble plumbings}\label{section:plumbings}
    Following \cite[Appendix A]{ap} and in particular \cite[Section 2]{sw} we now give the definition of the so-called double bubble plumbing.
    
    Generally, consider two smooth $n$-dimensional manifolds $Q_0$ and $Q_1$ and two embeddings $\kappa_i \colon Z \hookrightarrow Q_i$, $i\in \{0,1\}$. Let $\nu_{Z/Q_i}$ denote the normal bundle of the embedding $\kappa_i$, and fix an identification $\eta \colon \nu_{Z/Q_0} \overset{\cong}{\to} \nu_{Z/Q_1}$. The Weinstein neighborhood theorem implies that there is a tubular neighborhood $U(Z)_i \subset DT^\ast Q_i$ of $DT^\ast Z$ in $DT^\ast Q_i$ that is symplectomorphic to the pullback of the disk subbundle of $\iota_i^\ast(\nu_{Z/Q_i} \otimes \mathbb{C})$ where $\iota_i$ is the composition
    \[
    Z \overset{\kappa_i}{\hooklongrightarrow} Q_i \hooklongrightarrow DT^\ast Q_i,
    \]
    where the second map is the zero section. We now define the Stein domain
    \begin{equation}\label{eq:plumb}
        DT^\ast Q_0 \#_{Z,\eta} DT^\ast Q_1
    \end{equation}
    to be the result of gluing $DT^\ast Q_0$ and $DT^\ast Q_1$ using the identification
    \[
    \eta \otimes i \colon \iota_0^\ast(\nu_{Z/Q_0}\otimes \mathbb{C}) \overset{\cong}{\longrightarrow} \iota_1^\ast(\nu_{Z/Q_1}\otimes \mathbb{C}).
    \]
    We now define the plumbing $W_\eta(\kappa_0,\kappa_1)$ to be the completion of the Stein domain \eqref{eq:plumb}.
    
    \begin{definition}[Double bubble plumbing {\cite[Section 2.2]{sw}}]\label{def:double_bubble}
    Let $Q_0 = Q_1 = S^3$, $Z = S^1$ and let $\kappa_i \colon Z \hookrightarrow Q_i$ for $i\in \{0,1\}$ be two embeddings. For any identification $\eta \colon \nu_{Z/Q_0} \overset{\cong}{\to} \nu_{Z/Q_1}$ of the two normal bundles of $\kappa_0$ and $\kappa_1$, the Stein manifold $W_\eta(\kappa_0,\kappa_1)$ is called a \emph{double bubble plumbing}.
    
    When both $\kappa_0$ and $\kappa_1$ are unknotted, we denote the corresponding double bubble plumbing by $W_\eta$.
    \end{definition}
    
    Note that by construction, the two exact Lagrangians $Q_0,Q_1 \subset W_\eta$ intersect cleanly along the submanifold $Z$. Performing Lagrangian surgery of $Q_0$ and $Q_1$ along their clean intersection as described in \cite[Section 2.3]{mw} preserves exactness \cite[Lemma 6.2]{mw}. Since the normal bundle $\nu_{S^1/S^3} \cong S^1 \times D^2$ is trivial, the set of identifications $\eta \colon\nu_{S^1/S^3} \overset{\cong}{\to}\nu_{S^1/S^3}$ form a torsor for the integers. We fix an identification of such $\eta$ with $\mathbb{Z}$ in such a way that the Lagrangian surgery of $Q_0$ and $Q_1$ along $Z$ yields $S^1 \times S^2$ for $\eta = 0$, and the lens space $L(\eta,1)$ for $\eta \geq 1$. With this normalization, there is a symplectomorphism $W_{-k} \cong W_k$ for any $k\geq 1$ since the underlying smooth manifolds are diffeomorphic. Therefore it suffices to consider $W_k$ for $k\in \mathbb{Z}_{\geq 0}$.
    
    \subsection{The fiberwise wrapped Fukaya category}\label{section:setup}
    
    Abouzaid--Auroux \cite{aa} defined a partially wrapped Fukaya category associated to a toric Landau--Ginzburg model, called the \textit{fiberwise wrapped Fukaya category}. In this paper, we consider a similarly constructed partially wrapped Fukaya category, but associated to a Morse--Bott--Lefschetz fibration on $W_k$.
    \subsubsection{A Morse--Bott--Lefschetz fibration}\label{section:morse-bott-lefschetz}
        A Morse--Bott--Lefschetz fibration is a holomorphic map $p \colon X \to \mathbb{C}$ with transversely non-degenerate critical manifolds, meaning that its complex Hessian is pointwise non-degenerate as a complex bilinear form on each fiber of the complex normal bundle over the critical locus. Near any point in the critical locus, there are coordinates $(z_1,\ldots,z_n)$ so that $p$ is given by $(z_1,\ldots,z_n) \mapsto z_1^2 + \cdots + z_k^2$ for some $k \in \{1,\ldots,n\}$; if $k = n$ near every point in the critical locus, $p$ is a Lefschetz fibration. We say that a Morse--Bott--Lefschetz fibration $p$ is \emph{elementary} if there is a unique critical value of $p$.
    
        Following \cite[Section 4.1]{sw}, the Stein manifold $W_k$ for any $k\in \mathbb{Z}_{\geq 0}$ admits a Morse--Bott--Lefschetz fibration
        \[
        \pi \colon W_k \longrightarrow \mathbb{C}
        \]
        with generic fiber $(\mathbb{C}^\ast)^2$ and three singular fibers over the points $-1,0,1 \in \mathbb{C}$. The singular fibers are all isomorphic to $(\mathbb{C} \vee \mathbb{C}) \times \mathbb{C}^\ast$, and the vanishing cycles are $a$, $b$ and $a+kb$, respectively with respect to some basis $a,b$ of $H_1(T^2;\mathbb{Z})$, chosen consistently for the fibers of $\pi$, where $a$ is the meridian and $b$ is a longitude.
    
    \subsubsection{General setup and admissible Lagrangians}\label{section:general_setup}
    The goal of this section is to compute the fiberwise wrapped Fukaya category $\mathcal W(W_k,\pi)$. In order for this $A_\infty$-category to be well-defined, certain maximum principles for solutions of Floer equations in the construction of $\mathcal W(W_k,\pi)$ are employed. To be able to prove such maximum principles, we follow \cite[Section 3.1]{aa} and fix the following data:
    \begin{itemize}
    	\item an almost complex structure $J$ on $W_k$ compatible with the symplectic form such that $\pi$ is $J$-holomorphic outside of some small disks $\Delta_{-1},\Delta_0,\Delta_1\subset\mathbb{C}$ centered at $-1,0,1\in \mathbb{C}$,
    	\item a continuous weakly $J$-plurisubharmonic function $h\colon W_k\rightarrow[0,\infty)$,
    	\item a non-negative wrapping Hamiltonian $H\colon W_k\rightarrow[0,\infty)$,
    	\item a closed subset $W_k^\mathit{in}\subset W_k$, whose intersection with every fiber of $\pi$ is a sublevel set of $h$,
    \end{itemize}
    such that
    \begin{enumerate}[(i)]
     \item The restrictions of $h$ and $H$ to each fiber of $\pi$ are proper.
     \item The fibers of $\pi$ are preserved by the Hamiltonian flow of $H$, and outside of $W_k^\mathit{in}$, the level sets of $h$ are preserved by the Hamiltonian flow of $H$. Namely, $d\pi(X_H) = 0$, and $dh(X_H) = 0$ outside of $W_k^\mathit{in}$.
     \item Horizontal parallel transport preserves the function $H$ everywhere, and preserves the function $h$ outside of $W_k^\mathit{in}\cup\bigcup_{i\in\{-1,0,1\}}\pi^{-1}(\Delta_i)$. Namely, if $\xi^\#$ is the horizontal lift of some vector on $\mathbb{C}$, then
     $dH(\xi^\#)=0$, and $dh(\xi^\#)=0$ outside of $W_k^\mathit{in}\cup\bigcup_{i\in\{-1,0,1\}}\pi^{-1}(\Delta_i)$.
     \item The $1$-form $d^\mathbb{C}h$ vanishes on the symplectic orthogonal to the fibers of $\pi$ outside of $W_k^\mathit{in}\cup\bigcup_{i\in\{-1,0,1\}}\pi^{-1}(\Delta_i)$, i.e., $d^\mathbb{C}h(\xi^\#) = 0$ outside of $W_k^\mathit{in}\cup\bigcup_{i\in\{-1,0,1\}}\pi^{-1}(\Delta_i)$.
     \item The $1$-form $d^\mathbb{C}h$ is preserved by the Hamiltonian flow of $H$, and by horizontal parallel transport, i.e., $\mathcal{L}_{X_H}(d^\mathbb{C}h)=\mathcal{L}_{\xi^\#}(d^\mathbb{C}h)=0$.
     \item $d^\mathbb{C}h(X_H)\geq0$.
    \end{enumerate}
    In the above, (ii) and (iii) essentially mean that the functions $\pi$, $H$ and $h$ are Poisson commutative outside of $W_k^\mathit{in}\cup\bigcup_{i\in\{-1,0,1\}}\pi^{-1}(\Delta_i)$. (iv) and (v) are needed to ensure that the admissibility of the objects in the fiberwise wrapped Fukaya category (cf.\@ \cref{definition:admissible_lag}) is preserved under various Hamiltonian isotopies. Together with (vi), they also guarantee that the maximum principle holds for solutions of the perturbed Cauchy--Riemann equations relevant for the definition of the fiberwise wrapped Fukaya category. We note the difference between the above and the data used in \cite[Section 3.1]{aa}. In \cite{aa} the functions $h$ and $H$ are constructed for toric Landau--Ginzburg models, where the superpotential has a unique critical value. Our situation is different in the sense that $W_k$ is not toric and the Morse--Bott--Lefschetz fibration $\pi$ has multiple critical values.

    \begin{lemma}\label{lemma:setup}
    There exist functions $h,H\colon W_k\rightarrow[0,\infty)$ satisfying the conditions (i)--(vi) above.
    \end{lemma}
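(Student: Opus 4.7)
The plan is to build $h$ fiberwise and extend it globally via horizontal parallel transport, and then define $H$ as a function of $h$ supported outside a large sublevel set. This follows the blueprint of Abouzaid--Auroux \cite[Section 3.2]{aa}, with modifications to accommodate the three disjoint critical values of $\pi$ and its Morse--Bott nature. The key observation is that the monodromies of $\pi$ around $-1, 0, 1$ are compactly supported generalized Dehn twists along the vanishing cycles $a$, $b$, $a+kb$ in the generic fiber $F \cong (\mathbb{C}^\ast)^2$; hence any fiberwise $J$-plurisubharmonic exhaustion of $F$ that agrees with $\tfrac{1}{2}((\log|w_1|)^2 + (\log|w_2|)^2)$ outside a compact subset is automatically invariant under the monodromies near infinity of each fiber.

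First, I would fix disjoint small disks $\Delta_{-1}, \Delta_0, \Delta_1 \subset \mathbb{C}$ centered at the critical values, trivialize $\pi$ over $\mathbb{C} \setminus \bigcup \Delta_i$, choose such an $h_F$ on $F$, and extend it via horizontal parallel transport to a function $h$ on $\pi^{-1}(\mathbb{C} \setminus \bigcup \Delta_i)$, well-defined outside a compact ``core'' region in each fiber and extended smoothly inside so that fiberwise properness is preserved. To extend $h$ across $\pi^{-1}(\Delta_i)$, I would arrange the disks $\Delta_i$ to be small enough that outside $\{h \leq R\}$ parallel transport extends across $\pi^{-1}(\Delta_i)$ and still preserves $h$; inside $\{h \leq R\} \cap \pi^{-1}(\Delta_i)$ I would use Morse--Bott--Lefschetz normal coordinates $(z_1, \ldots, z_n) \mapsto z_1^2 + z_2^2$ together with the standard local plurisubharmonic function $\sum_j |z_j|^2$, glued to the global picture via a cutoff. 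Setting $W_k^{\mathit{in}} \coloneqq \{h \leq R\}$ then gives conditions (iii) and (iv) for $h$ on the complement of $W_k^{\mathit{in}} \cup \bigcup \pi^{-1}(\Delta_i)$ by horizontal invariance and the $J$-holomorphicity of $\pi$ there (which makes the horizontal distribution $J$-invariant, so $d^\mathbb{C} h(\xi^\#) = dh(J\xi^\#) = 0$).

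Next, I would take $H \coloneqq \rho(h)$ where $\rho \colon [0, \infty) \to [0, \infty)$ is smooth, non-decreasing, vanishing on $[0, R]$, and growing at infinity. Condition (i) then follows from properness of $h$ on each fiber. Since $X_H = \rho'(h) X_h$, level sets of $h$ are automatically preserved by $X_H$; $X_H$ vanishes inside $W_k^{\mathit{in}}$, and outside $W_k^{\mathit{in}}$ horizontal invariance of $h$ propagates both to $d\pi(X_H) = 0$ (completing (ii)) and to the full version of (iii) for $H$. Condition (v) follows because $d^\mathbb{C} h$ is itself horizontally and $X_H$-invariant wherever $X_H$ is nonzero, and (vi) reduces to $\rho'(h) \, d^\mathbb{C} h(X_h) \geq 0$, which holds by plurisubharmonicity of $h$ with the standard sign convention for $d^\mathbb{C}$.

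The hard part will be controlling the interplay near the critical fibers so that all six conditions persist simultaneously after gluing the Morse--Bott local model to the parallel-transport extension, together with simultaneously arranging $J$, the disks $\Delta_i$, and the cutoff regions compatibly. Disjointness of $\Delta_{-1}, \Delta_0, \Delta_1$ permits localizing the Abouzaid--Auroux construction to each critical value independently, so the only genuinely new feature compared to \cite[Section 3.2]{aa} is the presence of Morse--Bott directions transverse to the singular locus; these are handled by the same standard plurisubharmonic germ $\sum_j |z_j|^2$ as in the Lefschetz case, and the argument goes through with only mild adjustments.
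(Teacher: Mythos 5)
Your construction of $h$ follows essentially the same route as the paper: a fiberwise K\"ahler potential extended by parallel transport over each of the three elementary pieces and patched together. One caveat: the paper deliberately chooses the potential $|z_1|^2+|z_2|^2+|z_1|^{-2}+|z_2|^{-2}$ \emph{because} it is invariant under the collapsing circle and hence extends smoothly over each singular fiber with no interpolation needed. Your plan to glue the local model $\sum_j|z_j|^2$ to the parallel-transport extension ``via a cutoff'' is the step you yourself flag as hard, and it is genuinely problematic: a cutoff interpolation between two plurisubharmonic functions is not weakly plurisubharmonic in general, and weak plurisubharmonicity of $h$ is required globally, not just outside $W_k^{\mathit{in}}$. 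Choosing a single potential that already descends to the singular fiber removes the need for any gluing.

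The more serious gap is the choice $H=\rho(h)$. Your justification of condition (v) is circular (you assert that $d^{\mathbb C}h$ is $X_H$-invariant, which is exactly what (v) states), and the condition in fact \emph{fails} for a generic non-decreasing $\rho$. On a cylindrical end of the fiber, write $\lambda=d^{\mathbb C}h$ for the fiberwise Liouville form and $s$ for the cylindrical coordinate, so that $h\sim\tfrac12\|s\|^2$; a direct computation of $\mathcal L_{X_{\rho(h)}}\lambda=d\bigl(\lambda(X_{\rho(h)})\bigr)+\iota_{X_{\rho(h)}}d\lambda$ shows that it vanishes only when $\rho(h)$ is asymptotically \emph{linear} in $\|s\|=\sqrt{2h}$, i.e.\ homogeneous of degree one in the Liouville coordinate --- not for an arbitrary $\rho$ ``growing at infinity'' (e.g.\ $\rho=\mathrm{id}$ already fails, since then $\mathcal L_{X_h}\lambda=d(\|dh\|^2)-dh\neq0$). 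This is precisely why the paper does not take $H$ to be a function of $h$: it uses the product structure $E_{-1}\cong\mathbb C^2\times\mathbb C^\ast$ to build $H$ from a fiberwise Hamiltonian that is linear with positive slope on the cylindrical end of $T^\ast S^1$, for which the flow is Reeb-type and preserves $d^{\mathbb C}h$. Your approach can be repaired by imposing $\rho(u)\sim c\sqrt{2u}$ at infinity, but as written the verification of (v) (and, downstream, the slope control on $H$ needed for the Floer computations in Section 2.3) does not go through.
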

    \begin{proof}
    Consider the standard complex structure on $W_k$. To construct the required functions $h$ and $H$, note that $\pi\colon W_k\rightarrow\mathbb{C}$ is the fiber sum of three elementary Morse--Bott--Lefschetz fibrations $\pi_j\colon E_j\rightarrow\mathbb{C}$ for $j\in\{-1,0,1\}$, where $E_j\cong\mathbb{C}^2\times\mathbb{C}^\ast$ and $\pi_j$ has $j\in\mathbb{C}$ as its unique critical value. Note that when taking the boundary connected sum between two different elementary Morse--Bott--Lefschetz fibrations, fibers are identified via a biholomorphism of $(\mathbb{C}^\ast)^2$. We start with the function $h_{-1}\colon F_{-1}\rightarrow[0,\infty)$, defined on a smooth fiber $F_{-1} \subset E_{-1}$, which takes the form of the standard K\"{a}hler potential
    \[
    \phi_{-1}(z_1,z_2)\coloneqq|z_1|^2+|z_2|^2+\frac{1}{|z_1|^2}+\frac{1}{|z_2|^2},
    \]
    where $z_1,z_2$ are coordinates on $(\mathbb{C}^\ast)^2$. Extend $h_{-1}$ to all smooth fibers of $\pi_{-1}$ by parallel transport. Note that $\phi_{-1}$ extends smoothly over the unique singular fiber of $\pi_{-1}$ as the critical locus is given by collapsing the circle $|z_2|=1$ to a point. Hence we obtain a map $h_{-1}\colon E_{-1} \to [0,\infty)$. Using the fiberwise biholomorphisms on $E_{-1}\cap E_0$ and $E_0\cap E_1$ to patch together the locally defined plurisubharmonic functions $h_{-1}$, $h_0$ and $h_1$, we obtain a function $h\colon W_k\rightarrow[0,\infty)$. 
    
    Since $\phi_{-1}$ is plurisubharmonic, it follows that $h_{-1}$, and hence $h$, is plurisubharmonic. It is clear from the construction that the restriction of $h$ to each fiber of $\pi$ is proper. Let $W^\mathit{in}_k$ be the subset where $h\leq r(|\pi|)$, with $r$ being a non-decreasing function of $|\pi|$, which is constant over $[0,R_0]$ for some $R_0\gg1$. Since $h$ is defined locally by parallel transport, and the global parallel transport on $W_k$ is defined by gluing together the locally defined parallel transports on $\pi_j \colon E_j \to \mathbb{C}$ (cf. \cite{sw}, Lemma 4.1), we have $dh(\xi^\#)=0$. The fact that $d^\mathbb{C}h(\xi^\#)=0$ follows from \cite[Remark 3.1]{aa}. In our case, $d^\mathbb{C}h_{-1}$ gives the fiberwise Liouville $1$-form for $\pi_{-1}$, so the Hamiltonian vector field of $d^\mathbb{C}h$ is tangent to the fibers of $\pi$, from which it follows that $\mathcal{L}_{\xi^\#}(d^\mathbb{C}h)=0$.
    	
    To define the wrapping Hamiltonian $H$, note that the elementary Morse-Bott fibration $\pi_{-1}\colon E_{-1}\rightarrow\mathbb{C}$ can be identified with the product of the standard Lefschetz fibration $p_{-1}\colon \mathbb{C}^2\rightarrow\mathbb{C}$ with a unique singular fiber $\mathbb{C}^\ast$. We define a Hamiltonian $H_{\mathbb{C}^2}$ as follows: start with a Hamiltonian $\hbar_{-1}$ on the smooth fiber of $p_{-1}$ that vanishes near the zero section of $T^\ast S^1$ and is linear with positive slope on the cylindrical end. Then extend $\hbar_{-1}$ to the total space $\mathbb{C}^2$ of $p_{-1}$ by horizontal parallel transport. Recall that $E_{-1} \cong \mathbb{C}^2 \times \mathbb{C}^\ast$, and denote by $\pi_{\mathbb{C}^2}\colon E_{-1}\rightarrow\mathbb{C}^2$ and $\pi_{\mathbb{C}^\ast}\colon E_{-1}\rightarrow\mathbb{C}^\ast$ the natural projections. Then defining $H_{-1}\coloneqq \pi_{\mathbb{C}^2}^\ast\hbar_{-1}+\pi_{\mathbb{C}^\ast}^\ast\hbar_{-1}$ yields a Hamiltonian on $E_{-1}$. Then, we extend $H_{-1}$ to $W_k$ using the coordinate transformations between different $E_i$'s. Finally, we perturb the extension of $H_{-1}$ slightly to get our wrapping Hamiltonian $H\colon W_k\rightarrow[0,\infty)$ so that it has Morse critical points. Then it follows that $H$ is proper and $d\pi(X_H)=0$. By our choice of the function $h$, $X_H$ preserves the level sets of $h$ outside of $W_k^\mathit{in}$, and it follows that $dh(X_H)=0$. The fact that $\mathcal{L}_{X_H}(d^\mathbb{C}h)=0$ is also obvious from our choices of $h$ and $H$.
    \end{proof}
    
    \Cref{lemma:setup} ensures that the maximum principles proved in \cite[Propositions 3.10 and 3.11]{aa} can be applied to the solutions of Floer equations in the constructions of the Fukaya categories below.
    
    We now introduce a class of Lagrangians called \textit{admissible}, which will later be the objects of the $A_\infty$-category $\mathcal{W}(W_k,\pi)$.
    \begin{definition}[{\cite[Definition 3.4]{aa}}]
        An \emph{admissible arc} is a proper embedding $\gamma\colon [0,\infty)\rightarrow\mathbb{C}$ that is disjoint from the critical values of $\pi$ and the ray $(-\infty,-1)$, and along which the distance from the origin is strictly increasing outside of the large disk of radius $R_0$ centered at the origin.
    \end{definition}
    \begin{definition}[{\cite[Definition 3.5]{aa}}]\label{definition:admissible_lag}
        An \emph{admissible Lagrangian submanifold} is a properly embedded exact Lagrangian submanifold $L\subset W_k$ such that
        \begin{itemize}
        	\item $\pi(L)\subset\mathbb{C}$ agrees outside of a compact subset $\Delta\subset\mathbb{C}$ with a finite union of admissible arcs which do not reenter $\Delta$,
        	\item $d^\mathbb{C}h|_L$ vanishes outside of $W_k^\mathit{in}$.
        \end{itemize}
    \end{definition}
    Given an admissible Lagrangian $L\subset W_k$ and an isotopy $\rho^t$ of $\mathbb{C}$, preserving $\Delta\cup\mathit{Crit}(\pi)$ pointwise and the ray $(-\infty,-1)\subset \mathbb{C}$ setwise, there is a unique Lagrangian isotopy of $L$, which we denote by $\rho^t(L)$, such that
    \begin{itemize}
    	\item $\rho^t(L)=L$ in $\pi^{-1}(\Delta)$,
    	\item $\rho^t(L)$ fibers over the collection of arcs which is the image of $\pi(L)$ under $\rho^t$ outside of $\pi^{-1}(\Delta)$.
    \end{itemize}
    Suppose further that the isotopy $\rho^t$ maps radial lines to radial lines away from a compact subset, and moves all radial lines other than $(-\infty,-1) \subset \mathbb{C}$ in the counterclockwise direction. In particular, $\rho^t$ preserves the admissibility of the arcs. Define
    \begin{equation}\label{eq:L_t}
        L(t)\coloneqq \phi^t(\rho^t(L)),
    \end{equation}
    where $\phi^t$ is the flow of the wrapping Hamiltonian $H$ chosen above. It follows from the commutativity of $\phi^t$ and $\rho^t$ (\cite[Lemma 3.7]{aa}) that $L(t)(t')=L(t+t')$. We in particular consider admissible Lagrangians that fiber over admissible arcs in $\mathbb{C}$.
    
    As is standard in Floer theory, we study solutions to the perturbed Cauchy--Riemann equations using the same setup as in \cite[Section 3.2]{aa} that we now give a brief review of. Let $\Sigma$ denote the complement of finitely many boundary marked points on a compact Riemann surface with boundary. Suppose that $\overline{L}$ is a smooth family of admissible Lagrangian submanifolds in $W_k$ that is parametrized by $\partial \Sigma$ such that the family is constant near the ends of $\partial \Sigma$, and such that it varies along each component of $\partial \Sigma$ by a combination of
    \begin{itemize}
        \item the flow of $X_H \otimes \eta$ where $\eta$ is a one-form on $\partial \Sigma$ such that $\int_{\partial \Sigma}\eta \leq 0$, and
            \item the lift of an admissible isotopy of $\mathbb{C}$ with support away from $\Delta \cup \Delta'$, where $\Delta'$ is a small neighborhood of $\mathit{Crit}(\pi)$.
    \end{itemize}
    Denote by $\xi^t$ the vector field that generates the isotopy $\rho^t$, and let $(\xi^t)^\#$ denote its horizontal lift to $W_k$. Let $\tau \colon \Sigma \to \mathbb{R}$ denote a function that is constant near the boundary punctures of $\Sigma$ and satisfies that along each component of $\partial \Sigma$, the arcs $\rho^\tau(\pi(\overline{L}))$ vary by an admissible isotopy that moves in the clockwise direction outside of a compact set.
    
    Let $\alpha$ be a $1$-form on $\Sigma$ such that $d\alpha$ is compactly supported, $d\alpha \leq 0$ and $\alpha|_{\partial \Sigma} \geq \eta$ pointwise. A map $u \colon \Sigma \to W_k$ satisfies the \textit{Floer equation} if
    \begin{equation}\label{equation:floer}
        (du-X_H \otimes \alpha + (\xi^\tau)^\# \otimes d \tau)^{0,1} = 0.
    \end{equation}
    
    If $L_0$ and $L_1$ are two transversely intersecting Lagrangians that are admissible in the sense of \cref{definition:admissible_lag}, we define $\mathit{CF}^\ast(L_0,L_1)$ to be the chain complex over $\mathbb{K}$ freely generated by the set $L_0 \cap L_1$, and differential defined by counting solutions $u \colon \mathbb{R} \times [0,1] \to W_k$ to the Floer equation \eqref{equation:floer} that have Lagrangian boundary conditions on $L_0 \cup L_1$, i.e., $u(\mathbb{R} \times \{i\}) \subset L_i$ for $i\in \{0,1\}$, and are asymptotic to $x_{\pm} \in L_0 \cap L_1$ as $s \to \pm \infty$, where $s$ is a coordinate in the $\mathbb{R}$-factor in the domain of $u$.
    
    \subsubsection{The category, quasi-units and continuations}
    
    The fiberwise wrapped Fukaya category $\mathcal{W}(W_k,\pi)$ is defined as a certain localization of the Fukaya--Seidel category $\mathcal{F}(\pi)$.
    
    Let $\varepsilon > 0$ be such that for each pair of admissible Lagrangians $(L_0,L_1)$ and integers $m_0 \neq m_1$, the projections of $L_0(-\varepsilon m_0)$ and $L_1(-\varepsilon m_1)$ under $\pi$ are asymptotic to different radial lines in $\mathbb{C}$, and $L_0(-\varepsilon m_0)\cap L_1(-\varepsilon m_1)$ is compact (the existence is guaranteed by \cite[Lemma 3.16]{aa}). We use the notation
    \[
    L^t \coloneqq  L(-\varepsilon t).
    \]
    \begin{definition}[Fukaya--Seidel category]\label{definition:fukaya-seidel_cat}
        The Fukaya--Seidel category $\mathcal{F}(\pi)$ is defined as the directed $A_\infty$-category whose objects are admissible Lagrangian submanifolds equipped with $\mathit{Spin}$ structures and gradings. The morphisms in $\mathcal F(\pi)$ are given by
    \begin{align*}
        \hom_{\mathcal{F}(\pi)}(L_0^{m_0},L_1^{m_1}) = \begin{cases}
    	    \mathit{CF}^\ast(L_0^{m_0},L_1^{m_1}) & \text{if } m_0 < m_1, \\
            \mathbb{K} \cdot \mathit{id} & \text{if }m_0=m_1\text{ and }L_0=L_1\\
            0 & \textrm{otherwise}
    	\end{cases}.
    \end{align*}
    The $A_\infty$-structure on $\mathcal{F}(\pi)$ is defined by solutions to the Floer equation \eqref{equation:floer} with boundary on $L_0^{m_0}\cup \dots \cup L_r^{m_r}$ for $r \in \mathbb{Z}_{\geq 2}$, whose images are contained in a bounded subset of $W_k^\mathit{in}$ by \cite[Propositions 3.10 and 3.11]{aa}.
    \end{definition}
    
    A standard construction in Floer theory (see e.g.\@ \cite[Section 6.2]{sgs}, \cite[Section 3.3]{gps1}, and \cite[Section 3.4]{aa}) produces a quasi-unit $e_m\in\mathit{CF}^0(L^m,L^{m+1})$ for every $m \in \mathbb{Z}$. In short, they are defined by counting maps $u\colon \Sigma \to W_k$ that are solutions of the Floer equation \eqref{equation:floer} where $\Sigma$ a disk with a single boundary puncture regarded as an output and moving Lagrangian boundary conditions on $L^t = L(-\varepsilon t)$ for $t\in [m,m+1]$ (see \cite[Section (8k)]{sf}). Denote by $\mathcal{Z}$ the set of all quasi-units.
    
    \begin{definition}[Fiberwise wrapped Fukaya category]\label{definition:fiberwise_wfuk}
        The \emph{fiberwise wrapped Fukaya category} associated to the Morse--Bott--Lefschetz fibration $\pi \colon W_k \to \mathbb{C}$ is defined as the localization of the Fukaya--Seidel category $\mathcal F(\pi)$ by the quasi-units
        \[
        \mathcal{W}(W_k,\pi)\coloneqq \mathcal{Z}^{-1}\mathcal{F}(\pi),
        \]
        in the sense of Lyubashenko--Ovsienko \cite{lo}. We shall use the notation
        \[
        \mathit{CW}^\ast_{\pi}(L_0,L_1) \coloneqq  \hom_{\mathcal W(W_k,\pi)}(L_0,L_1).
        \]
    \end{definition}
    
    There are also continuation maps
    \[
    F_{L_0^k,L_1^j} \colon \mathit{CF}^\ast(L_0^k, L_1^j) \longrightarrow \mathit{CF}^\ast(L_0^{k+1}, L_1^{j+1}),
    \]
    that are defined by a count of maps $u \colon \mathbb{R} \times [0,1] \to W_k$ that are solutions to the Floer equation \eqref{equation:floer} and have moving Lagrangian boundary conditions on $L_0^t$ and $L_1^t$ for $t\in [k,k+1]$ along $\mathbb{R} \times \{0\}$ and $t\in [j,j+1]$ along $\mathbb{R} \times \{1\}$, respectively. The continuation maps $F_{L_0^k,L_1^j}$ yield morphisms in $\mathcal F(\pi)$ by extending them by the identity map.
    
    For the purpose of explicitly calculating certain endomorphism $A_\infty$-algebras in $\mathcal W(W_k,\pi)$ in \cref{section:single,section:product}, we use the description of the morphisms between two objects $L_0$ and $L_1$ in $\mathcal W(W_k,\pi)$ using homotopy colimits described in \cite[Section 3.5]{aa}. In particular, the following result is essentially obtained from the proof of \cite[Corollary 3.24]{aa}.
    
    \begin{lemma}\label{lemma:hocolim}
        For any $k\in \mathbb{Z}_{\geq 0}$ there is a quasi-isomorphism
        \[
        \mathit{CW}^\ast_\pi(L_0,L_1) \cong \hocolim_{j\to \infty} \hom_{\mathcal F(\pi)}(L_0^k,L_1^j),
        \]
        where the connecting maps in the homotopy colimit are given by composition with the quasi-units $e_j \in \mathit{CF}^0(L_0^j, L_0^{j+1})$.
    \end{lemma}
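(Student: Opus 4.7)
The plan is to adapt the proof of \cite[Corollary 3.24]{aa} to our setting. By \cref{definition:fiberwise_wfuk}, $\mathcal{W}(W_k,\pi)$ is the Lyubashenko--Ovsienko localization of $\mathcal{F}(\pi)$ at the collection $\mathcal{Z}$ of quasi-units. Since the Lyubashenko--Ovsienko construction provides an explicit bar-type model for the morphism complexes in the localized $A_\infty$-category, the strategy is to identify this bar complex with the sequential homotopy colimit of $\hom_{\mathcal{F}(\pi)}(L_0^k,L_1^j)$.

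The key structural input is that for each admissible Lagrangian $L$, the quasi-units assemble into a totally ordered directed diagram $L^0\xrightarrow{e_0}L^1\xrightarrow{e_1}L^2\xrightarrow{e_2}\cdots$ in $\mathcal{F}(\pi)$, and by construction every $e_m$ is inverted in $\mathcal{W}(W_k,\pi)$. The Lyubashenko--Ovsienko bar complex a priori involves formal zigzags that invert quasi-units on both the source and target side; the first step is a cofinality argument showing that, because these diagrams are directed sequences, one may without loss of generality fix the source to be any chosen $L_0^k$ and reduce the zigzags to a single directed colimit on the target side. Different choices of $k$ yield quasi-isomorphic complexes because multiplication by the $e_m$'s on the source is an isomorphism in the localization.

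With the source fixed, the remaining sequential colimit can be modeled as a telescope, i.e.\@ a homotopy colimit of chain complexes along the connecting maps induced by post-composition with $e_j\in\mathit{CF}^0(L_1^j,L_1^{j+1})$. This is exactly the assertion of the lemma; the identification of telescope and bar-construction at this stage is formal and already appears in \cite[Section 3.5]{aa}.

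The principal technical obstacle is not the localization algebra, which is essentially formal, but rather ensuring that the Floer-theoretic constructions behind the $A_\infty$-structure on $\mathcal{F}(\pi)$, the quasi-units, and the continuation maps are well-defined in our non-toric setting, where $\pi$ has three critical values rather than the single critical value appearing in the toric Landau--Ginzburg model of \cite{aa}. This is precisely what \cref{lemma:setup} achieves: the data $(J,h,H,W_k^\mathit{in})$ satisfying properties (i)--(vi) ensure that the maximum principles of \cite[Propositions 3.10 and 3.11]{aa} apply to the solutions of the Floer equation \eqref{equation:floer} in $W_k$. Granted these compactness statements, the argument of \cite[Corollary 3.24]{aa} transports to our setup without further modification, yielding the claimed quasi-isomorphism.
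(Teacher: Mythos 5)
Your overall route is the same as the paper's: both reduce to the homotopy-(co)limit description of morphisms in the Lyubashenko--Ovsienko localization from \cite[Section 3.5]{aa}, with \cref{lemma:setup} supplying the maximum principles needed to run that machinery in the non-toric, multi-critical-value setting. The one step where your sketch goes astray is the justification for fixing the source at $L_0^k$. You write that different choices of $k$ give quasi-isomorphic complexes ``because multiplication by the $e_m$'s on the source is an isomorphism in the localization.'' That is circular: the lemma's purpose is to compute the localized morphism space, so invertibility \emph{in the localization} cannot be the reason the computation works. What is actually needed is a statement about the un-localized complexes, namely that pre-composition with the quasi-unit $e_k$,
\[
\hocolim_{j\to\infty}\hom_{\mathcal F(\pi)}(L_0^{k+1},L_1^j)\longrightarrow\hocolim_{j\to\infty}\hom_{\mathcal F(\pi)}(L_0^k,L_1^j),
\]
is a quasi-isomorphism. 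This is genuine Floer-theoretic input, not localization formalism: it is proved using the continuation maps $F_{L_0^k,L_1^j}$ and their compatibility with the quasi-units (\cite[Lemmas 3.19 and 3.23]{aa}), which furnish homotopy inverses at the level of the hocolims. Once that is in place, one must still pass from $\holim_{k\to\infty}\hocolim_{j\to\infty}\hom_{\mathcal F(\pi)}(L_0^k,L_1^j)$ (the output of \cite[Lemma 3.22]{aa}) to a single term of the inverse system; the paper does this by verifying the Mittag-Leffler condition on cohomology so that no $\lim^1$ term obstructs, a point your ``cofinality'' remark does not address. Neither issue is fatal---both are contained in the argument of \cite[Corollary 3.24]{aa} to which you ultimately defer---but as written your sketch replaces the substantive step with a tautology.
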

    \begin{proof}
        First, by \cite[Lemma 3.22]{aa}, we have a quasi-isomorphism
        \[
        \mathit{CW}^\ast_\pi(L_0,L_1) \cong \holim_{k\to\infty}\hocolim_{j\to \infty} \hom_{\mathcal F(\pi)}(L_0^k,L_1^j).
        \]
        The proof of \cite[Lemma 3.23]{aa} shows that the natural map
        \[
        \hocolim_{j\to \infty} \hom_{\mathcal F(\pi)}(L_0^{k+1},L_1^j) \longrightarrow \hocolim_{j\to \infty} \hom_{\mathcal F(\pi)}(L_0^k,L_1^j),
        \]
        is given by pre-composition with the quasi-unit $e_k \in \mathit{CF}^0(L_0^k,L_0^{k+1})$, and is in fact a quasi-isomorphism; it follows from the compatibilities between continuation maps $F_{L_0^k,L_1^j}$ and quasi-units proven in \cite[Lemma 3.19]{aa}. In particular, the induced map on cohomology
        \[
        \hocolim_{j\to \infty} H^\ast\hom_{\mathcal F(\pi)}(L_0^{k+1},L_1^j) \longrightarrow \hocolim_{j\to \infty} H^\ast\hom_{\mathcal F(\pi)}(L_0^k,L_1^j),
        \]
        is onto, and therefore the inverse system $\{ H^\ast\hom_{\mathcal F(\pi)}(L_0^{k},L_1^j)\}_k$ satisfies the Mittag-Leffler condition (see \cite[Definition 3.5.6]{weibel}). Hence the induced composition
        \[
        HW^\ast_\pi(L_0,L_1) \longrightarrow \holim_{k\to\infty}\hocolim_{j\to \infty} H^\ast\hom_{\mathcal F(\pi)}(L_0^k,L_1^j) \longrightarrow\hocolim_{j\to \infty} H^\ast\hom_{\mathcal F(\pi)}(L_0^k,L_1^j)
        \]
        is an isomorphism for any $k\geq 0$, finishing the proof.
    \end{proof}
    
    \subsubsection{The cotangent fibers in $W_k$}\label{section:cotangent_fibers}
    
    We now introduce the specific objects in the $A_\infty$-category $\mathcal{W}(W_k,\pi)$ that are relevant for us. Under the Morse--Bott--Lefschetz fibration $\pi\colon W_k\rightarrow\mathbb{C}$, the two exact Lagrangian spheres $Q_0$ and $Q_1$ in $W_k$ project to line segments $[-1,0]$ and $[0,1]$, respectively. Choose properly embedded curves $\gamma_0,\gamma_1\colon (-\infty,\infty)\rightarrow\mathbb{C}$ such that $\gamma_0$ intersects $(-1,0)$ transversely at $-\frac{1}{2}$, and $\gamma_1$ intersects $(0,1)$ transversely at $\frac{1}{2}$. We require $\gamma_0\cap[0,1]=\emptyset$ and $\gamma_1\cap[-1,0]=\emptyset$, and moreover that the arcs $\gamma_0$ and $\gamma_1$ are asymptotically radial outside of a disk containing the critical values $-1,0,1 \in \mathbb{C}$, with $\arg(\gamma_0(s))=\theta_0$ for $s\ll0$, $\arg(\gamma_0(s))=2\pi-\theta_0$ for $s\gg0$, $\arg(\gamma_1(s))=\theta_1$ for $s\ll0$, and $\arg(\gamma_1(s))=2\pi-\theta_1$ for $s\gg0$, where $0<\theta_0<\theta_1<\frac{\pi}{2}$, see \cref{fig:base}. Note that after removing the points $-\frac{1}{2}$ and $\frac{1}{2}$ from $\gamma_0$ and $\gamma_1$, respectively, each complement is a union of two admissible arcs.
    
    \begin{figure}[!htb]
    	\centering
    	\includegraphics{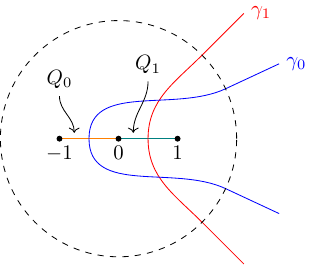}
    	\caption{The projections of $Q_0$, $Q_1$, and the arcs $\gamma_0$, $\gamma_1$.}\label{fig:base}
    \end{figure}
    
    Since the fibers $\pi^{-1}(-\frac{1}{2})$ and $\pi^{-1}(\frac{1}{2})$ are diffeomorphic to $(\mathbb{C}^\ast)^2$, we pick $(\mathbb{R}_+)^2$ as distinguished Lagrangians and denote them by $\ell_0\subset\pi^{-1}(-\frac{1}{2})$ and $\ell_1\subset\pi^{-1}(\frac{1}{2})$, respectively. The parallel transports of $\ell_0$ and $\ell_1$ along $\gamma_0$ and $\gamma_1$ define two Lagrangian submanifolds $L_0$ and $L_1$ in $W_k$. It is clear that both $L_0$ and $L_1$ are admissible Lagrangian submanifolds with respect to the Morse--Bott--Lefschetz fibration $\pi$ in the sense of \cref{definition:admissible_lag}. The compact subset $\Delta\subset\mathbb{C}$ for $L_0$ and $L_1$ are taken to be the points $\{-\frac{1}{2}\}$ and $\{\frac{1}{2}\}$, respectively. By construction $L_0$ and $L_1$ are Hamiltonian isotopic to the cotangent fibers over a point in $Q_0$ and $Q_1$, respectively. Therefore the wrapped Fukaya category $\mathcal{W}(W_k;\mathbb{K})$ is generated by $L_0$ and $L_1$ \cite{cdrgg,gps}.
    
    The goal of the rest of this section is to compute the endomorphism $A_\infty$-algebra 
    \[
    \mathcal{W}_k\coloneqq \bigoplus_{i,j \in \{0,1\}}\mathit{CW}^\ast(L_i,L_j).
    \]
    Recall from \cref{definition:fiberwise_wfuk} that the morphism space $\hom_{\mathcal W(W_k;\pi)}(L_0,L_1)$ is denoted by $\mathit{CW}^\ast_\pi(L_0,L_1)$, and define 
    \[
    \mathcal{W}_{k,\pi}\coloneqq \bigoplus_{i,j \in \{0,1\}}\mathit{CW}^\ast_\pi(L_i,L_j).
    \]
    As outlined at the beginning of the section, the Fukaya $A_\infty$-algebra $\mathcal W_k$ is a certain localization of $\mathcal W_{k,\pi}$. This will be discussed further in \cref{subsection:wrapping}. As a first step we compute $\mathcal W_{k,\pi}$ starting in \cref{section:single}. We end this subsection with a discussion about phase angles of the fiberwise Lagrangians constituting $L_0$ and $L_1$, which will be useful in the computations below.
    
    For $j \in \{0,1\}$, define the \textit{phase angles} of the fiberwise Lagrangian $\ell_{j,c}=\pi^{-1}(c)\cap L_j\subset(\mathbb{C}^\ast)^2$ at a point $c\in\gamma_j$ to be the set of angles
    \[
    \varphi_{j,c}\coloneqq \left\{\arg(x)=(\arg(x_1),\arg(x_2))\mid x=(x_1,x_2)\in\ell_{j,c}\setminus W_k^\mathit{in}\right\}.
    \]
    It follows from the definitions that $\varphi_{0,-\frac{1}{2}} = \varphi_{1,\frac{1}{2}} = \{(0,0)\}$. According to \cite[Corollary 4.19]{aa}, these angles are invariant under local parallel transports along the restrictions of the paths $\gamma_0$ and $\gamma_1$ to the bases of the elementary Morse--Bott--Lefschetz fibrations $\pi_i\colon E_i\rightarrow\mathbb{C}$, where $i \in \{-1,0,1\}$. It follows that for the Lagrangian $L_j\subset W_k$, there are at most three sets of different phase angles for the fiberwise Lagrangians $\ell_{j,c}$. More generally, the phase angles of the perturbed Lagrangian submanifolds $\ell_{j,c}(t)=\phi^t(\ell_{j,c})$ are denoted by $\varphi_{j,c}(t)$. In this case, the phase angles $\varphi_{j,c}(t)$ may depend on $c$, despite the fact that they still remain locally constant. These properties of our Lagrangian submanifolds $\ell_{j,c}(t)$ correspond to the condition of \textit{monomial admissibility} defined in \cite[Definition 4.2]{aa}.
    
    \subsection{The $A_\infty$-algebra of a single Lagrangian}\label{section:single}
    
    In this subsection, we compute the endomorphism $A_\infty$-algebras $\mathit{CW}^\ast_\pi(L_0,L_0)$ and $\mathit{CW}^\ast_\pi(L_1,L_1)$ in the fiberwise wrapped Fukaya category $\mathcal{W}(W_k,\pi)$, where $L_0$ and $L_1$ are the two Lagrangians defined in \cref{section:cotangent_fibers}.
    
    \subsubsection{The case of $L_1$}\label{sec:complex_L1}
    
    Fix $t<0$ and $t'>0$ and let $\gamma_{1,t} \coloneqq  \rho^t(\gamma_1)$. The Lagrangian submanifold $L_1(t)$ (see \eqref{eq:L_t}) is fibered over the path $\gamma_{1,t}$, i.e., it is obtained by parallel transporting of $\ell_{1,\frac 12} \subset (\mathbb{C}^\ast)^2$ along $\gamma_{1,t}$. Depending on the difference of $t'-t$, the paths $\gamma_{1,t}$ and $\gamma_{1,t'}$ may intersect at one or two points. More precisely, if $t_1$ is a value of $t$ such that $\rho^{t}$ pushes the ray $e^{-i\theta_1}\mathbb{R}_+$ past $e^{i\theta_1}\mathbb{R}_+$, then $\gamma_{1,t}\cap\gamma_{1,t'}=\{\frac{1}{2}\}$ for $t'-t<t_1$ and $\gamma_{1,t}\cap\gamma_{1,t'}=\{\frac{1}{2},c_{t',t}\}$ for $t'-t>t_1$. The latter case  is depicted in \cref{fig:L1}. We choose $t$ and $t'$ generically so that $L_1(t)\cap L_1(t')$ is transverse and contained in the compact subset $W_k^\mathit{in}\subset W_k$. Thus the intersections $L_1(t)\cap L_1(t')$ are concentrated in the fibers of $\pi$ over $\gamma_{1,t} \cap \gamma_{1,t'}$. Let $\ell_{1,-}(t'),\ell_{1,+}(t)\subset\pi^{-1}(c_{t',t})$ denote the parallel transports of $\ell_1(t')=\phi^{t'}(\ell_1)$ and $\ell_1(t)=\phi^t(\ell_1)$ from $\frac{1}{2}$ to $c_{t',t}$ along $\gamma_{1,t'}$ and $\gamma_{1,t}$, respectively. By \cite[Proposition 5.2]{aa} it follows that
    \begin{equation}\label{eq:single}
        \mathit{CF}^\ast(L_1(t'),L_1(t)) = \begin{cases}
        \mathit{CF}^\ast(\ell_1(t'),\ell_1(t))\oplus\mathit{CF}^\ast(\ell_{1,-}(t'),\ell_{1,+}(t))[1] & t'-t>t_1, \\
        \mathit{CF}^\ast(\ell_1(t'),\ell_1(t)) & 0<t'-t<t_1.
        \end{cases}
    \end{equation}
    In the above, the gradings on $L_1(t)$ and $L_1(t')$ are determined by the choices of gradings on the Lagrangians $\ell_1(t),\ell_1(t')\subset(\mathbb{C}^\ast)^2$ in the fiber and on the paths $\gamma_{1,t},\gamma_{1,t'}\subset\mathbb{C}$. In particular, since the phase angles of $\gamma_{1,t}$ and $\gamma_{1,t'}$ differ by an amount in $(-\pi,0)$ at $c_{t',t}$, the fiberwise generators above this intersection point have degree $-1$ in the Floer complex $\mathit{CF}^\ast(L_1(t'),L_1(t))$.
    
    \begin{figure}[!htb]
    	\centering
    	\includegraphics{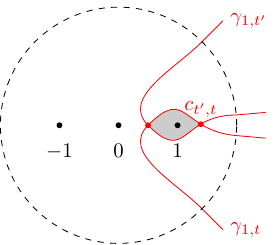}
    	\caption{The arcs $\gamma_{1,t}$ and $\gamma_{1,t'}$ with a shaded bigon contributing to the Floer differential.}\label{fig:L1}
    \end{figure}
    
    We take the almost complex structure $J$ on $W_k$ so that it coincides with the standard one outside of $W_k^\mathit{in}\cap\bigcup_{i\in\{-1,0,1\}}\pi^{-1}(\Delta_i)$. Recall from \cref{section:general_setup} that $\pi$ is $J$-holomorphic away from the disks $\Delta_i \subset \mathbb{C}$ for $i\in \{-1,0,1\}$. Thus, it follows from the open mapping theorem that solutions to the Floer equation \eqref{equation:floer} with boundary components on a union of fibered Lagrangian submanifolds (disjoint from the $\pi^{-1}(\Delta_i)$'s) are either contained in the fibers of $\pi$, or behave (away from the singular fibers) as (multi)sections of $\pi$ over regions of $\mathbb{C}$ bounded by the arcs over which the Lagrangians fiber. 
    
    We consider the case $t'-t>t_1$, where $\mathit{CF}^\ast(L_1(t'),L_1(t))$ is isomorphic to the mapping cone
    \begin{equation}\label{eq:cone}
    \mathit{CF}^\ast(\ell_1(t'),\ell_1(t))\oplus\mathit{CF}^\ast(\ell_{1,-}(t'),\ell_{1,+}(t))[1], \quad \partial=\begin{pmatrix}
    \partial_0 & \mathfrak{s} \\ 0 & \partial_1
    \end{pmatrix},
    \end{equation}
    where the diagonal entries are the Floer differentials on the fiberwise Floer complexes, and the off-diagonal term
    \[
    \mathfrak{s}=\mathfrak{s}_{\ell_1,t',t}\colon \mathit{CF}^\ast(\ell_{1,-}(t'),\ell_{1,+}(t))\longrightarrow\mathit{CF}^{\ast}(\ell_1(t'),\ell_1(t))
    \]
    is a chain map defined by a signed count of $J$-holomorphic sections of $\pi\colon W_k\rightarrow\mathbb{C}$ over the region of $\mathbb{C}$ bounded by $\gamma_{1,t}$ and $\gamma_{1,t'}$ (the shaded bigon in \cref{fig:L1}).
    
    The fiberwise Floer complexes $\mathit{CF}^\ast(\ell_1(t'),\ell_1(t))$ in \eqref{eq:cone} depend on the choice of the fiberwise wrapping Hamiltonian $H$. Since $\ell_1\subset\pi^{-1}(\frac{1}{2})$ has constant phase angles at infinity, one can take $H$ to be a convex Hamiltonian whose slope at infinity in the fiber $\pi^{-1}(\frac{1}{2})$ is chosen so that the fiberwise Lagrangian intersections $\varphi^{t'}(\ell_1(t')) \cap \varphi^t(\ell_1(t))$ lie in a compact subset, where $\varphi^t$ denotes the time-$t$ flow of $X_H$. Next, \cite[Propositions 5.6 and 5.7]{aa} imply that
    \begin{itemize}
    	\item For $t'-t$ generic, there is a square $P_1(t'-t)\subset\mathbb{R}^2$ (centered at the origin) such that the complex $\mathit{CF}^\ast(\ell_1(t'),\ell_1(t))$ is supported in degree $0$ with vanishing differential, and
    	\begin{equation}\label{eq:add}
    	\mathit{CF}^0(\ell_1(t'),\ell_1(t))\cong\bigoplus_{p\in P_1(t'-t)\cap(2\pi\mathbb{Z})^2}\mathbb{K}\cdot\vartheta_p^{t'\rightarrow t},
    	\end{equation}
    	where $\vartheta_p^{t'\rightarrow t}$ corresponds to intersections of $\ell_1(t')$ and $\ell_1(t)$ inside $\pi^{-1}(\frac{1}{2})$.
    	\item For generic choices of $t''>t'>t$, the Floer product is given by
    	\begin{equation}\label{eq:prod}
    	\vartheta_p^{t'\rightarrow t}\cdot\vartheta_{p'}^{t''\rightarrow t'}=\vartheta_{p+p'}^{t''\rightarrow t},
    	\end{equation}
    	where $p\in P_1(t'-t)\cap(2\pi\mathbb{Z})^2$ and $p'\in P_1(t''-t')\cap(2\pi\mathbb{Z})^2$.
    \end{itemize}
    
    For the complex $\mathit{CF}^\ast(\ell_{1,-}(t'),\ell_{1,+}(t))$, since the isotopy $\rho^t$ for $t<0$ does not act as a convex Hamiltonian isotopy, we need to pick $t_1'\geq t_1$ large enough so that there is a convex Hamiltonian isotopy from $\ell_{1,+}(t)$ to $\ell_{1,-}(t')$ in the fiber $\pi^{-1}(c_{t',t})$ if $t'-t>t_1'$. We can then apply \cite[Propositions 5.6 and 5.14]{aa}, which imply that with $t'-t,t''-t',t''-t$ generic, we have
    \begin{itemize}
    	\item For $t'-t>t_1'$, the fiberwise Floer complex $\mathit{CF}^\ast(\ell_{1,-}(t'),\ell_{1,+}(t))$ is supported in degree $0$ with vanishing differential, and there is a square $Q_1(t'-t)\subset\mathbb{R}^2$ such that
    	\begin{equation}\label{eq:add1}
    	\mathit{CF}^0(\ell_{1,-}(t'),\ell_{1,+}(t))\cong\bigoplus_{q\in Q_1(t'-t)\cap(2\pi\mathbb{Z})^2}\mathbb{K}\cdot\zeta_q^{t'\rightarrow t},
    	\end{equation}
    	where $\zeta_q^{t'\rightarrow t}$ corresponds to intersections of $\ell_{1,-}(t')$ and $\ell_{1,+}(t)$ in $\pi^{-1}(c_{t',t})$.
    	\item For $t'-t>t_1'$, the Floer product is given by
    	\begin{equation}\label{eq:p1}
    	\zeta_q^{t'\rightarrow t}\cdot\vartheta_p^{t''\rightarrow t'}=\zeta_{p+q}^{t''\rightarrow t},
    	\end{equation}
    	where $q\in Q_1(t'-t)\cap(2\pi\mathbb{Z})^2$, $p\in P_1(t''-t')\cap(2\pi\mathbb{Z})^2$. Similarly, for $t''-t'>t_1'$,
    	\begin{equation}\label{eq:p2}
    	\vartheta_p^{t'\rightarrow t}\cdot\zeta_q^{t''\rightarrow t'}=\zeta_{p+q}^{t''\rightarrow t},
    	\end{equation}
    	where $p\in P_1(t''-t')\cap(2\pi\mathbb{Z})^2$, $q\in Q_1(t'-t)\cap(2\pi\mathbb{Z})^2$.
    	\item For $t'-t>t_1'$ and $t''-t'>t_1'$,
    	\begin{equation}\label{eq:p3}
    	\zeta_p^{t\rightarrow t'}\cdot\zeta_q^{t''\rightarrow t'}=0
    	\end{equation}
    	for all $p,q\in Q_1(t'-t)\cap(2\pi\mathbb{Z})^2$.
    \end{itemize}
    
    \begin{remark}\label{remark:weight}
    As opposed to \cite[Proposition 5.14]{aa}, no additional constants can appear as coefficients for the products in \eqref{eq:p1} and \eqref{eq:p2} since we do not weight the counts by symplectic areas when working in the exact category. 
    \end{remark}
    
    In particular, for $t'-t>t_1'$ generic, the Floer complex $\mathit{CF}^\ast(L_1(t'),L_1(t))$ is given by the mapping cone
    \begin{equation}\label{eq:vc}
    \left\{\bigoplus_{q\in Q_1(t'-t)\cap(2\pi\mathbb{Z})^2}\mathbb{K}\cdot\zeta_q^{t'\rightarrow t}[1]\overset{\mathfrak{s}}{\longrightarrow}\bigoplus_{p\in P_1(t'-t)\cap(2\pi\mathbb{Z})^2}\mathbb{K}\cdot\vartheta_p^{t'\rightarrow t}\right\},
    \end{equation}
    where the shift $[1]$ is due to the fact that the degree $0$ generator $\zeta_q^{t'\rightarrow t}$ in the fiber has degree $-1$ when regarded as a generator of $\mathit{CF}^\ast(L_1(t'),L_1(t))$. It follows that
    \[
    \mathfrak{s}_{\ell_1,t',t}(\zeta_0^{t'\rightarrow t})=\sum_{p\in P_1(t'-t)\cap(2\pi\mathbb{Z})^2}c_{\bar{p}}\vartheta_p^{t'\rightarrow t}
    \]
    for some $c_{\bar{p}}\in\mathbb{K}$, where $\bar{p}=\frac{p}{2\pi}$. It follows from \eqref{eq:p1}, \eqref{eq:p2} and the Leibniz rule that for any $q\in Q_1(t'-t)\cap(2\pi\mathbb{Z})^2$,
    \[
    \mathfrak{s}_{\ell_1,t',t}(\zeta_q^{t'\rightarrow t})=\sum_{p\in P_1(t'-t)\cap(2\pi\mathbb{Z})^2}c_{\bar{p}}\vartheta_{p+q}^{t'\rightarrow t}.
    \]
    For the statement of the next proposition, we introduce the Laurent polynomial
    \[
    g_2(x)=\sum_{p\in P_1(t'-t)\cap(2\pi\mathbb{Z})^2}c_{\bar{p}}x^{\bar{p}}\in\mathbb{K}[x_1^{\pm1},x_2^{\pm1}].
    \]
    
    \begin{proposition}\label{proposition:diff-b}
    The fiberwise wrapped Fukaya $A_\infty$-algebra $\mathit{CW}^\ast_\pi(L_1,L_1)$ is quasi-isomorphic to the dg algebra with underlying associative algebra $\mathbb{K}[x_1^{\pm1},x_2^{\pm1},\beta]/(\beta^2)$, grading
    \[
    |x_1|=|x_2|=0,\quad |\beta|=-1,
    \]
    and differentials
    \[
    dx_1=dx_2=0, \quad d\beta=g_2(x).
    \]
    \end{proposition}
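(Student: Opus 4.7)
The strategy is to combine the mapping cone description \eqref{eq:vc} of each finite-level Floer complex with the homotopy colimit formula of \cref{lemma:hocolim}, passing to the limit $j\to\infty$ to read off the claimed dg algebra structure. Concretely, fix $k=0$ in \cref{lemma:hocolim} so that $\mathit{CW}^\ast_\pi(L_1,L_1)$ is computed as the homotopy colimit of $\mathit{CF}^\ast(L_1,L_1^j)$ as $j\to\infty$. For each $j$ large enough that $-\varepsilon j<-t_1'$, the complex is the mapping cone \eqref{eq:vc}, with generators $\vartheta_p^{0\to j}$ in degree $0$ indexed by $p\in P_1(-\varepsilon j)\cap(2\pi\mathbb{Z})^2$, generators $\zeta_q^{0\to j}$ in degree $-1$ indexed by $q\in Q_1(-\varepsilon j)\cap(2\pi\mathbb{Z})^2$, and differential $\mathfrak{s}_{\ell_1,0,-\varepsilon j}$.

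Next, I would identify the connecting maps in the colimit as inclusions. The quasi-unit $e_j\in\mathit{CF}^0(L_1^j,L_1^{j+1})$ is represented at the fiber level by the origin generator $\vartheta_0^{(j+1)\to j}$, so by the product formulas \eqref{eq:prod} and \eqref{eq:p2} the map induced by composition with $e_j$ sends $\vartheta_p^{0\to j}\mapsto\vartheta_p^{0\to(j+1)}$ and $\zeta_q^{0\to j}\mapsto\zeta_q^{0\to(j+1)}$. These correspond to the inclusions $P_1(-\varepsilon j)\subset P_1(-\varepsilon(j+1))$ and $Q_1(-\varepsilon j)\subset Q_1(-\varepsilon(j+1))$; since these squares exhaust $\mathbb{R}^2$, the homotopy colimit of this filtered system of injections computes the direct limit, yielding a degree-$0$ part $\bigoplus_{p\in(2\pi\mathbb{Z})^2}\mathbb{K}\cdot\vartheta_p\cong\mathbb{K}[x_1^{\pm1},x_2^{\pm1}]$ via $\vartheta_p\leftrightarrow x^{\bar p}$ and a degree-$(-1)$ part $\beta\cdot\mathbb{K}[x_1^{\pm1},x_2^{\pm1}]$ via $\zeta_q\leftrightarrow\beta x^{\bar q}$.

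The algebra structure is then immediate from \eqref{eq:prod}, \eqref{eq:p1}, \eqref{eq:p2}, \eqref{eq:p3}: the first gives commutative monomial multiplication, \eqref{eq:p1} and \eqref{eq:p2} give the commutation of $\beta$ with the $x_i$'s together with the module structure, and \eqref{eq:p3} gives $\beta^2=0$. The mapping cone differential sends $\zeta_0\mapsto\mathfrak{s}(\zeta_0)=\sum_p c_{\bar p}\vartheta_p$, so $d\beta=g_2(x)$, and the Leibniz rule (consistent with \eqref{eq:p1}--\eqref{eq:p2}) extends this to $d(\beta x^{\bar q})=g_2(x)\cdot x^{\bar q}$ on the whole algebra.

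The main subtlety to address is that the Fukaya $A_\infty$-structure on $\mathit{CW}^\ast_\pi(L_1,L_1)$ is actually quasi-isomorphic to a \emph{strict} dg algebra, i.e., the higher $A_\infty$-operations either vanish or can be gauged away. A degree count shows that the only potentially nonzero higher operation is a $\mu^3$ landing in degree $-1$ with three degree-$0$ inputs; ruling this out (or absorbing it into a gauge equivalence) should use the fact that the mapping cone model at each finite stage has a strictly associative product and a chain map $\mathfrak{s}$ commuting with the products on the nose, and that the colimit structure maps respect these. A secondary point is that $g_2$ stabilizes to a genuine Laurent polynomial in the limit, equivalently that only finitely many of the $c_{\bar p}$ are nonzero; this is a statement about counts of $J$-holomorphic sections of $\pi$ over the bigon region in \cref{fig:L1} and should follow from an energy or projection argument localized near the critical point of $\pi$ in the bigon.
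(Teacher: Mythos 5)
Your proposal follows essentially the same route as the paper: compute $\mathit{CW}^\ast_\pi(L_1,L_1)$ as the homotopy colimit of the mapping cones \eqref{eq:vc}, identify the connecting maps with inclusions of Laurent polynomials supported in growing squares (via composition with $\vartheta_0$), read off the product from \eqref{eq:prod}--\eqref{eq:p3} and the differential from $\mathfrak{s}(\zeta_0)$. The one point you leave open is the vanishing of $\mu^3$, and the route you gesture at (strict associativity of the finite-stage models plus a gauge equivalence) would not by itself suffice: an associative $\mu^2$ is compatible with a nonzero $\mu^3$. The paper's resolution is geometric and shorter: after the degree count reduces the question to $\mu^3$ with three degree-$0$ inputs, one observes that all degree-$0$ generators live in the single fiber $\pi^{-1}(\frac{1}{2})$, so (by the open mapping theorem applied to $\pi\circ u$) any contributing solution is contained in that fiber, where the fiberwise $A_\infty$-operations are trivial; hence $\mu^3=0$ on the nose. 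Your secondary worry about $g_2$ stabilizing is also handled more simply than by an energy argument: $g_2$ is a finite sum at each finite stage by construction, and its independence of $t'-t$ follows because the connecting maps are chain maps, so $\mathfrak{s}_{t'',t}(\zeta_0^{t''\to t})$ is the image of $\mathfrak{s}_{t',t}(\zeta_0^{t'\to t})$ under the inclusion.
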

    \begin{proof}
    By \cref{lemma:hocolim}, $\mathit{CW}^\ast_\pi(L_1,L_1)$ is quasi-isomorphic to
    \[
    \hocolim_{t\to \infty}\mathit{CF}^\ast(L_1(t'),L_1(t)),
    \]
    for $t'-t$ generic. Let $t'-t>t_1'$ and identify $\vartheta_p^{t'\rightarrow t}$ with $x^{\bar{p}}$ for all $p\in P_1(t'-t)\cap(2\pi\mathbb{Z})^2$, and $\zeta_q^{t'\rightarrow t}$ with $x^{\bar{q}}$ for all $q\in Q_1(t'-t)\cap(2\pi\mathbb{Z})^2$, where $\bar p = \frac{p}{2 \pi}$ and $\bar q = \frac{q}{2\pi}$. We may therefore identify $\mathit{CF}^\ast(L_1(t'),L_1(t))$ with a subcomplex of the mapping cone
    \begin{equation}\label{eq:lc}
    \left\{\mathbb{K}[x_1^{\pm1},x_2^{\pm1}]\overset{g_2}{\longrightarrow}\mathbb{K}[x_1^{\pm1},x_2^{\pm1}]\right\},
    \end{equation}
    consisting of those Laurent polynomials whose Newton polytopes are contained in the squares $\frac{1}{2\pi}P_1(t'-t)$ and $\frac{1}{2\pi}Q_1(t'-t)$.
    	
    It follows from \eqref{eq:p1} and \eqref{eq:p2} that under the above identification, the product operations on these Floer complexes are given by multiplications of Laurent polynomials. By \cite[Proposition 5.16]{aa}, the connecting maps in the homotopy colimit are given by composition with $\vartheta_0^{t'\rightarrow t}$. Therefore, it follows that the connecting maps in the homotopy colimit for $t'-t$ sufficiently large are given by inclusion maps, and the homotopy colimit of the complexes \eqref{eq:vc} is therefore quasi-isomorphic to the mapping cone \eqref{eq:lc}. After identifying the image of $\zeta_0^{t'\rightarrow t}$ in the homotopy colimit with $\beta$, we see that the complex $\mathit{CW}^\ast_\pi(L_1,L_1)$ is generated by $x_1^{\pm1},x_2^{\pm1}$ and $\beta$ with the prescribed differentials. The product structures are determined by \eqref{eq:prod}, \eqref{eq:p1}, \eqref{eq:p2} and \eqref{eq:p3}.
    	
    Since the complex $\mathit{CW}^\ast_\pi(L_1,L_1)$ is supported in degrees $-1$ and $0$, the $A_\infty$-operations $\mu^k$ on $\mathit{CW}^\ast_\pi(L_1,L_1)$ must vanish for any $k\geq4$, and $\mu^3$ can be non-zero only when applied to three entries in degree $0$. Since the only degree $0$ generators of $\mathit{CW}^\ast_\pi(L_1,L_1)$ lie in the fiber $\pi^{-1}(\frac{1}{2})$, which has trivial $A_\infty$-operations, we see that $\mu^3$ also vanishes.
    \end{proof}

    \begin{remark}
    Since $\mathit{HW}^\ast_\pi(L_1,L_1)\cong\mathbb{K}[x_1^{\pm1},x_2^{\pm_1}]/(g_2(x))$ is supported in degree $0$, it is not hard to see that the $A_\infty$-algebra $\mathit{CW}_\pi^\ast(L_1,L_1)$ is actually formal.
    \end{remark}
    
    In order to compute $\mathit{CW}^\ast_\pi(L_1,L_1)$, it remains to identify the Laurent polynomial $g_2(x)$ appearing in \cref{proposition:diff-b}. To do this, we appeal to a trick of Abouzaid--Auroux \cite{aa}, where $L_1(t)$ and $L_1(t')$ are replaced with Lagrangian cylinders. For each $x=(x_1,x_2)\in(\mathbb{K}^\ast)^2$, let $\tau_x \subset\pi^{-1}(\frac{1}{2})$ be a product torus in $(\mathbb{C}^\ast)^2$, equipped with a rank $1$ local system over $\mathbb{K}$ whose holonomy around the $i$-th $S^1$-factor is $x_i^{-1}$ for $i\in\{1,2\}$. Note that $\tau_x$ (including its local system) is invariant under parallel transport between the fibers of $\pi$, and we can choose the wrapping Hamiltonian $H$ in \cref{lemma:setup} so that $\tau_x$ is invariant under its flow. For any $t\in \mathbb{R}$, let $T_x(t)$ be the admissible Lagrangian submanifold obtained by parallel transporting $\tau_x\subset\pi^{-1}(\frac{1}{2})$ over the admissible arc $\gamma_{1,t}$.
    
    \begin{remark}
    In \cite{aa}, the authors work in the case where $\mathbb{K}=\mathbb{C}$ or a Novikov field, so they consider product tori $\mathfrak{t}_x\subset(\mathbb{C}^\ast)^n$ with fixed moment map coordinates. Since we do not weight our curve counting by symplectic areas, the position of $\tau_x\subset(\mathbb{C}^\ast)^n$ is not important for us. Also, their torus $\mathfrak{t}_x$ is equipped with a unitary local system since one can always rescale by the weights of symplectic areas to recover any point of $(\mathbb{K}^\ast)^n$, but we consider local systems with holonomies in $(\mathbb{K}^\ast)^2$ due to the lack of rescaling. Despite these minor differences, most of the arguments in \cite{aa} can be applied without essential changes to our case, and when this is the situation, we shall directly cite the results from \cite{aa} below instead of rerunning the same arguments.
    \end{remark}
    
    We now consider the Floer complex $\mathit{CF}^\ast(T_x(t'),T_x(t))$ for $t-t'>t_1$ generic and fixed $x\in(\mathbb{K}^\ast)^2$. Similarly to \eqref{eq:single}, this Floer complex is quasi-isomorphic to the mapping cone of a chain map
    \[
    \mathit{CF}^\ast(\tau_x,\tau_x)\longrightarrow \mathit{CF}^\ast(\tau_x,\tau_x),
    \]
    which is defined by counting $J$-holomorphic sections of $\pi\colon W_k\rightarrow\mathbb{C}$ over the region bounded by $\gamma_{1,t}$ and $\gamma_{1,t'}$.
    
    \begin{proposition}\label{proposition:Tx}
    When $t'-t>t_1$, the complex $\mathit{CF}^\ast(T_x(t'),T_x(t))$ is given by the following mapping cone
    \begin{equation}\label{eq:sx}
    \left\{H^\ast(T^2;\mathbb{K})[1]\overset{\mathfrak{s}_x}{\longrightarrow}H^\ast(T^2;\mathbb{K})\right\},
    \end{equation}
    where the connecting differential $\mathfrak{s}_x$, defined by counting $J$-holomorphic sections of $\pi\colon W_k\rightarrow\mathbb{C}$ over the region bounded by $\gamma_{1,t}$ and $\gamma_{1,t'}$, with coincidence conditions on cycles in $\tau_x$ at $-1$ and $c_{t',t}$, is given by the multiplication by $g_2(x)\in\mathbb{K}$. 
    \end{proposition}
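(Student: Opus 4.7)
The plan is to mimic the proof of \cref{proposition:diff-b} but with the fiberwise Lagrangian $\ell_1$ replaced by the product torus $\tau_x$ carrying a local system. First I would argue that, exactly as in the derivation of \eqref{eq:single}--\eqref{eq:cone}, the two admissible arcs $\gamma_{1,t}$ and $\gamma_{1,t'}$ intersect in the two points $\{\tfrac12,c_{t',t}\}$, and since $\tau_x$ is preserved by parallel transport (as noted in the definition of $T_x(t)$), the $J$-holomorphic open-mapping principle forces all intersection generators of $\mathit{CF}^\ast(T_x(t'),T_x(t))$ to lie in the two fibers $\pi^{-1}(\tfrac12)$ and $\pi^{-1}(c_{t',t})$. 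Applying the analogue of \cite[Proposition 5.2]{aa} then expresses $\mathit{CF}^\ast(T_x(t'),T_x(t))$ as the mapping cone of the section-counting map
\[
\mathfrak{s}_x\colon \mathit{CF}^\ast(\tau_x,\tau_x)[1]\longrightarrow \mathit{CF}^\ast(\tau_x,\tau_x),
\]
where the two copies correspond to the fibers over $c_{t',t}$ and $\tfrac12$, respectively. A small Hamiltonian perturbation of $\tau_x$ inside $(\mathbb{C}^\ast)^2$, together with the fact that the local system on $\tau_x$ is pulled back from itself, identifies $\mathit{CF}^\ast(\tau_x,\tau_x)$ with $H^\ast(T^2;\mathbb{K})$ as in \cite[Proposition 5.6]{aa}, producing the mapping cone displayed in \eqref{eq:sx}.

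The main content is then the identification $\mathfrak{s}_x=g_2(x)$. I would compare the two section-counting chain maps $\mathfrak{s}_{\ell_1,t',t}$ (from the proof of \cref{proposition:diff-b}) and $\mathfrak{s}_x$ through the same moduli space of $J$-holomorphic sections of $\pi$ over the shaded bigon of \cref{fig:L1}. Since $\ell_1=(\mathbb{R}_+)^2$ and $\tau_x$ lie in the same fibers of the toric moment map on $(\mathbb{C}^\ast)^2$, any section contributing to $\mathfrak{s}_{\ell_1,t',t}$ with boundary on $L_1(t)\cup L_1(t')$ admits a unique translate (by the $T^2$-action in the fiber direction) with boundary on $T_x(t)\cup T_x(t')$, and this identification is a bijection of moduli spaces. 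Evaluating the coincidence condition at the intersection point above $\tfrac12$, a section whose output equals $\vartheta_p^{t'\to t}$ in the $\ell_1$ computation contributes, in the $\tau_x$ computation, the holonomy monomial $x^{\bar p}$, because the boundary loops of the translated section on the two copies of $\tau_x$ differ by the homology class in $H_1(T^2;\mathbb{Z})$ represented by $\bar p=p/(2\pi)\in\mathbb{Z}^2$. Summing over $p\in P_1(t'-t)\cap(2\pi\mathbb{Z})^2$ with the same integer coefficients $c_{\bar p}$ that appear in $\mathfrak{s}_{\ell_1,t',t}$ yields precisely $g_2(x)$.

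The step I expect to be the main obstacle is justifying the bijection of moduli spaces together with the holonomy bookkeeping. One must check that the $J$-holomorphic sections are transverse in both setups, that the boundary homology class of a section with output $\vartheta_p^{t'\to t}$ really is $\bar p$, and that signs and the factor $x_i^{-1}$ chosen for the holonomy conventions combine correctly. Here I would invoke the area-insensitive analogues of \cite[Propositions 5.6 and 5.14]{aa}, together with \cref{remark:weight}, to conclude that the counts carry over to the exact setting without additional area weights, so that the coefficient of $x^{\bar p}$ in $\mathfrak{s}_x$ is precisely $c_{\bar p}$. This completes the identification $\mathfrak{s}_x = g_2(x) \in \mathbb{K}$ acting on $H^\ast(T^2;\mathbb{K})$ by scalar multiplication.
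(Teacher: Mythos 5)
The first half of your argument (the concentration of generators in the fibers over $\tfrac12$ and $c_{t',t}$, the mapping cone structure, and the identification of each $\mathit{CF}^\ast(\tau_x,\tau_x)$ with $H^\ast(T^2;\mathbb{K})$ via a perfect Morse perturbation) matches the paper, which achieves the same by choosing the wrapping Hamiltonian to restrict to a perfect Morse function on the tori. The problem is in your identification $\mathfrak{s}_x=g_2(x)$. You assert that $\ell_1=(\mathbb{R}_+)^2$ and $\tau_x$ ``lie in the same fibers of the toric moment map'' and that every section contributing to $\mathfrak{s}_{\ell_1,t',t}$ has a unique fiberwise $T^2$-translate with boundary on $T_x(t)\cup T_x(t')$. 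Neither claim holds: $(\mathbb{R}_+)^2$ is a section of $\arg\colon(\mathbb{C}^\ast)^2\to T^2$ and is transverse to the product torus $\tau_x$ (they meet in a single point of each fiber), and the fiberwise $T^2$-action carries $(\mathbb{R}_+)^2$ to rotated copies $e^{i\theta}\mathbb{R}_+\times e^{i\theta'}\mathbb{R}_+$, never to $\tau_x$. Consequently there is no induced bijection of moduli spaces; indeed the two moduli problems have different expected dimensions (for torus boundary conditions the sections come in $2$-dimensional $T^2$-orbits, cut down by a point constraint as in \cref{proposition:disk}, whereas for the $\ell_1$-type boundary conditions the asymptotics $\zeta_q,\vartheta_p$ already rigidify the problem). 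So the central step of your proof rests on a false premise.

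The comparison the proposition actually requires is established differently, namely by the argument of \cite[Proposition 5.21]{aa} to which the paper defers, and which is carried out explicitly for the analogous polynomial $g_1$ in \cref{lemma:px}: one introduces the mixed complexes $\mathit{CF}^\ast(L_1(t'),T_x(t))$, which have a single generator in each relevant fiber precisely because $(\mathbb{R}_+)^2$ and $\tau_x$ meet in one point, and uses the product formula $\vartheta_p^{t'\to t}\cdot\varepsilon_x = x^{\bar p}\varepsilon_x$ (the analogue of \eqref{eq:id2}, i.e.\@ \cite[Proposition 5.15]{aa}) together with associativity and the Leibniz rule to transfer the coefficients $c_{\bar p}$ of $\mathfrak{s}_{\ell_1,t',t}$ into the torus computation, yielding $\mathfrak{s}_x=\sum_p c_{\bar p}x^{\bar p}=g_2(x)$. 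Note also that the logic of the paper is that the individual counts $c_{\bar p}$ are \emph{not} computed directly; they are only determined a posteriori from \cref{proposition:disk,proposition:g} once $\mathfrak{s}_x=g_2(x)$ is known for all $x$. Your appeal to \cref{remark:weight} for the absence of area weights is correct but does not repair the missing correspondence. To fix the proof, replace the $T^2$-translation step by the module-structure argument sketched above.
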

    \begin{proof}
    One can choose the fiberwise wrapping Hamiltonian $H$ so that it is a perfect Morse function when restricted to the Lagrangian tori $\tau_x$ in the fibers $\pi^{-1}(\frac{1}{2})$ and $\pi^{-1}(c_{t',t})$, so that each Floer complex $\mathit{CF}^\ast(\tau_x,\tau_x)$ can be identified with a copy of $H^\ast(T^2;\mathbb{K})$ (possibly up to a shift). The rest is the same as the proof of \cite[Proposition 5.21]{aa}. Note that \cref{remark:weight} applies here as well, which explains why there is no additional constant before $g_2(x)$ in our statement.
    \end{proof}
    
    Since the differential $\mathfrak{s}_x$ in \eqref{eq:sx} is multiplication by some element of $\mathbb{K}$, it is enough to determine the image of the generator of $H^0(T^2;\mathbb{K})$ under $\mathfrak{s}_x$, which amounts to counting $J$-holomorphic sections of $\pi\colon W_k\rightarrow\mathbb{C}$ whose boundary passes through some prescribed input point in the fiber $\pi^{-1}(\frac{1}{2})$.
    
    \begin{proposition}\label{proposition:disk}
    For $t'-t>t_1$ and $x\in(\mathbb{K}^\ast)^2$, there are two homotopy classes of $J$-holomorphic sections of $\pi\colon W_k\rightarrow\mathbb{C}$ with boundary on $T_x(t')\cup T_x(t)$, corresponding to the two irreducible components of the singular fiber $\pi^{-1}(1)$. For each such class, the moduli space of $J$-holomorphic sections consists of a single orbit under the fiberwise $T^2$-action, and the count of these sections passing through any given point of $\tau_x\subset\pi^{-1}(\frac{1}{2})$ is equal to $1$.
    \end{proposition}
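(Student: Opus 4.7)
The argument follows \cite[Proposition 5.22]{aa} closely, with small modifications to account for the Morse--Bott $\mathbb{C}^\ast$ direction in the singular fiber over $1$.

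The first step is to confine the sections geometrically. Since $\pi$ is $J$-holomorphic outside of $\bigcup_{i\in\{-1,0,1\}}\pi^{-1}(\Delta_i)$, applying the open mapping theorem to $\pi\circ u$ shows that any $J$-holomorphic section $u$ with boundary on $T_x(t')\cup T_x(t)$ must project onto the closure of the bigon $B\subset\mathbb{C}$ bounded by $\gamma_{1,t}$ and $\gamma_{1,t'}$ (cf.\@ \cref{fig:L1}). Because $t'-t>t_1$, this bigon contains the unique critical value $1$ in its interior and is disjoint from $\Delta_{-1}$ and $\Delta_0$, so the only singular fiber encountered by $u$ is $\pi^{-1}(1)$. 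The maximum principles \cite[Propositions 3.10 and 3.11]{aa}, together with the exactness of the boundary conditions, preclude energy escape and guarantee that the relevant moduli space is compact.

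The second step is to localize near $\pi^{-1}(1)$. By construction, $\pi$ agrees near this fiber with the elementary Morse--Bott--Lefschetz fibration $\pi_1\colon E_1\cong\mathbb{C}^2\times\mathbb{C}^\ast\to\mathbb{C}$, and horizontal parallel transport preserves both $\tau_x$ and its local system; hence the boundary conditions may be identified with translates of a fixed product torus. The $\mathbb{C}^\ast$ factor is uncoupled from the Lefschetz direction, and the $\mathbb{C}^\ast$-component of $\tau_x$ is a single invariant circle, so any $J$-holomorphic section of $\pi_1$ with the prescribed boundary splits as the product of a holomorphic section of the standard Lefschetz fibration $p_1\colon\mathbb{C}^2\to\mathbb{C}$ over $B$ (with boundary on two parallel circles) and a constant map into the $\mathbb{C}^\ast$-component of $\tau_x$.

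The third step is the classification for the Lefschetz model. For $p_1$ over a bigon enclosing a single critical value, there are exactly two homotopy classes of holomorphic sections with the given boundary, one for each Lefschetz thimble through the critical point $(0,0)$; in $W_k$ these correspond to the two irreducible components of $\pi^{-1}(1)\cong(\mathbb{C}\vee\mathbb{C})\times\mathbb{C}^\ast$. Each class is cut out transversely (the index calculation for a thimble with circle boundary condition is standard), and the fiberwise $S^1$-action on the boundary circle together with the $\mathbb{C}^\ast$-action on the trivial factor assemble into the fiberwise $T^2$-action, under which the moduli space is a free single orbit. Imposing a generic point constraint in $\tau_x\subset\pi^{-1}(\tfrac12)$ therefore pins down exactly one section in each class.

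The step I expect to be most delicate is the sign computation that turns the a priori $\pm1$ count into $+1$. I would follow the recipe of \cite[Proposition 5.22]{aa}: equip each Lefschetz thimble with its canonical orientation and $\mathit{Spin}$ structure, and check that the chosen gradings and $\mathit{Spin}$ structures on $T_x(t')$ and $T_x(t)$ induce the same sign on the two sections, exploiting the symmetry that exchanges the two irreducible components of $\pi^{-1}(1)$. Combined with the transversality and $T^2$-regularity established above, this yields the stated count of $1$ per homotopy class.
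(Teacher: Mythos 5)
Your proposal reaches the right conclusion but takes a genuinely different reduction from the paper's. You localize near the singular fiber, invoke the elementary model $E_1\cong\mathbb{C}^2\times\mathbb{C}^\ast$, split the section into a holomorphic section of the standard Lefschetz fibration times a constant map into the $\mathbb{C}^\ast$-factor, and classify sections of the Lefschetz model. The paper instead argues homologically and globally: a section over the bigon $S$ has algebraic intersection number $1$ with $\pi^{-1}(1)=F^+\cup F^-$ and $0$ with $\pi^{-1}(0)$ and $\pi^{-1}(-1)$, so by positivity of intersections it meets exactly one of $F^\pm$ once and avoids everything else; the complement $W_k^\pm=W_k\setminus(\pi^{-1}(-1)\cup\pi^{-1}(0)\cup F^\pm)$ is biholomorphic to $(\mathbb{C}\setminus\{-1,0\})\times(\mathbb{C}^\ast)^2$, so the section is tautologically the graph of a holomorphic map $S\to(\mathbb{C}^\ast)^2$ with the given torus boundary conditions, and the count is then \cite[Proposition 5.24]{aa}. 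The intersection-number argument both produces the dichotomy of the two homotopy classes (which in your write-up is asserted via the somewhat imprecise ``one for each Lefschetz thimble through the critical point $(0,0)$'' --- note the critical locus here is a copy of $\mathbb{C}^\ast$, not a point, and the classes are indexed by the components of the singular fiber, as the statement says) and replaces your appeal to the open mapping theorem plus model splitting.

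The one step of yours I would not accept as written is the global product splitting over the bigon and the resulting constancy of the $\mathbb{C}^\ast$-component. The bigon $S$ is not contained in a small neighborhood of the critical value $1$: its vertices are $\tfrac12$ and the far-away point $c_{t',t}$, and $W_k$ is assembled as a fiber sum in which the pieces $E_j$ are glued by nontrivial fiberwise biholomorphisms of $(\mathbb{C}^\ast)^2$. So the identification of $\pi^{-1}(S)$ with $\mathbb{C}^2\times\mathbb{C}^\ast$ over all of $S$, with the boundary tori sitting as products compatibly along both arcs, is exactly the kind of statement that needs an argument; the paper's complement trick is designed to sidestep it, since $W_k^\pm\to\mathbb{C}\setminus\{-1,0\}$ is canonically a trivial $(\mathbb{C}^\ast)^2$-bundle. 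If you supply that identification (or replace your second step by the intersection-number-plus-complement argument), the remainder of your outline, including the deferral of signs and transversality to the Abouzaid--Auroux model computation, matches the paper's level of detail.
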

    \begin{proof}
    Let $S\subset\mathbb{C}$ be the region bounded by $\gamma_{1,t}$ and $\gamma_{1,t'}$. Since a $J$-holomorphic section of $\pi\colon W_k\rightarrow\mathbb{C}$ over $S$ has intersection number $1$ with $\pi^{-1}(1)=F^+\cup F^-$, where $F^+\cong F^-\cong\mathbb{C}^\ast\times\mathbb{C}$, it must intersect exactly one of $F^+$ and $F^-$, and be disjoint from the other irreducible component of $\pi^{-1}(1)$, On the other hand, the section has intersection number $0$ with $\pi^{-1}(-1)$ and $\pi^{-1}(0)$, so it is disjoint from both. It thus suffices to consider $J$-holomorphic disks contained in
    \[
    W_k^\pm\coloneqq W_k\setminus\left(\pi^{-1}(-1)\cup\pi^{-1}(0)\cup F^\pm\right),
    \]
    which is biholomorphic to $(\mathbb{C}\setminus\{-1,0\})\times(\mathbb{C}^\ast)^2$. This enables us to reduce the problem to that of finding $J$-holomorphic maps $S\rightarrow(\mathbb{C}^\ast)^2$ satisfying appropriate boundary conditions. The rest of the argument is the same as \cite[Proposition 5.24]{aa}.
    \end{proof}
    
    \begin{proposition}\label{proposition:g}
    The Laurent polynomial $g_2(x)$ in \cref{proposition:diff-b} is given by $g_2(x)=x_1^k+x_2$.
    \end{proposition}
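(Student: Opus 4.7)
The plan is to evaluate $g_2(x)$ as a weighted count of the two $J$-holomorphic sections provided by \cref{proposition:disk}. By \cref{proposition:Tx}, the map $\mathfrak{s}_x$ is multiplication by $g_2(x)\in\mathbb{K}$, so the image of the unit $1\in H^0(T^2;\mathbb{K})$ under $\mathfrak{s}_x$ equals $g_2(x)$; on the other hand this image is a sum over the two homotopy classes of sections $u_+,u_-$ of $\pi$ over the region $S\subset\mathbb{C}$ bounded by $\gamma_{1,t}$ and $\gamma_{1,t'}$, each contributing the holonomy of the local system on $\tau_x$ around the boundary $\partial u_\pm$. Since this local system has holonomy $x_i^{-1}$ around the $i$-th $S^1$-factor of $\tau_x$, one obtains an expression of the form
\[
g_2(x)=\varepsilon_+\, x^{-\partial u_+}+\varepsilon_-\, x^{-\partial u_-}\in\mathbb{K}[x_1^{\pm 1},x_2^{\pm 1}],
\]
where $\partial u_\pm\in H_1(T^2;\mathbb{Z})=\mathbb{Z}\langle a,b\rangle$ and $\varepsilon_\pm\in\{\pm 1\}$ are orientation signs.

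The substantive step is to identify the two boundary classes $\partial u_\pm$. The two sections are distinguished by which irreducible component $F^+$ or $F^-$ of $\pi^{-1}(1)$ they meet; by the intersection-number argument in the proof of \cref{proposition:disk}, each section is otherwise confined to the smooth locus of $\pi$. Near the critical value $1$ the fibration admits a local model of the form $(z_1,z_2,z_3)\mapsto z_1z_2$ on $\mathbb{C}^2\times\mathbb{C}^\ast$, and the two branches $\{z_1=0\}$ and $\{z_2=0\}$ of the singular fiber give rise to disks whose boundaries in the nearby smooth $T^2\subset\mathbb{C}^\ast\times\mathbb{C}^\ast$ are determined by tracking, along $\partial S$, how the Dehn twist monodromy around $1$ (which acts on $H_1(T^2;\mathbb{Z})$ as the transvection along the vanishing class $a+kb$) splits between the two branches. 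Applying the recipe of \cite[Proposition 5.26]{aa} to this local model, one finds that the two boundary classes become $-ka$ and $-b$ in the global basis, so that $x^{-\partial u_+}=x_1^k$ and $x^{-\partial u_-}=x_2$.

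It remains to confirm the signs, which follow from the orientation conventions of \cite[Section 5]{aa}: each moduli space is a canonically oriented $T^2$-orbit, and both contributions turn out to be positive, giving $\varepsilon_\pm=+1$ and therefore $g_2(x)=x_1^k+x_2$.

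The principal obstacle is the identification of the boundary classes $\partial u_\pm$: one must fix a global trivialization of $H_1(T^2;\mathbb{Z})$ that is consistent both with the symplectic parallel transport used to define $L_1$ and $T_x$ in \cref{section:cotangent_fibers} and with the holomorphic local coordinates in the model above, and then verify that the $k$-twist encoded by $\eta$ in the double-bubble plumbing construction of \cref{def:double_bubble} distributes so that all of the $a$-winding is concentrated on one branch (with weight $k$) while the other branch carries only $b$-winding. Once this is done the remaining sign and degree checks are routine.
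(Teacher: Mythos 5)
Your framework is the same as the paper's: use \cref{proposition:Tx} to reduce $g_2(x)$ to the image of the unit under $\mathfrak{s}_x$, invoke \cref{proposition:disk} to get exactly two homotopy classes of sections, each counted once, and weight each by the holonomy of the local system on $\tau_x$ around its boundary. However, the step you yourself flag as ``the principal obstacle''---the identification of the boundary classes $\partial u_\pm$ as $-ka$ and $-b$---is the entire content of the proposition, and your proposal does not carry it out; it asserts the answer and defers the verification. Moreover, the heuristic you offer for it does not work as stated: the difference of your two classes is $\pm(ka-b)$, which is not a multiple of the vanishing class $a+kb$ (the relevant determinant is $k^2+1\neq 0$), so no ``splitting of the transvection along $a+kb$ between the two branches'' can by itself produce these classes. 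The monodromy around $1$ determines the fiber component of a section's boundary only modulo the vanishing class, so an additional geometric input is needed both to resolve the individual classes and to fix the overall normalization of the two monomials.

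The paper supplies that input in two steps. First, it deforms the bigon $S$ to a disk around $1$ and the boundary Lagrangian $\bigcup_{s\in\partial S}\tau_x$ to a genuine product torus $\tau_x\times S^1$, arranged so that the local system has trivial holonomy in the base direction; this fixes the normalization ambiguity. Second, following \cite[Proposition 5.26]{aa}, it constructs an explicit $2$-chain $D_{+-}$ with $[D_{+-}]=[D_-]-[D_+]$, fibered over a path from $\partial S$ to the critical value by $S^1$-orbits linking the curve along which the fibers degenerate; computing $[\partial D_{+-}]=(-k,1,1)\in H_1(\tau_x\times S^1)$ shows the two contributions have relative weight $x_1^kx_2^{-1}$, which together with the normalization and the count from \cref{proposition:disk} pins them down as $x_1^k$ and $x_2$. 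Your proposal would be complete if the monodromy-splitting heuristic were replaced by this (or an equivalent) chain-level computation; as written, the step that actually produces the exponent $k$ on $x_1$ and the linear term $x_2$ is missing.
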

    \begin{proof}
    The count of $J$-holomorphic sections of $\pi\colon W_k\rightarrow\mathbb{C}$ contributing to the chain map $\mathfrak{s}_x$ in \eqref{eq:sx} is weighted by their holonomies of the local systems on $T_x(t)$ and $T_x(t')$ (obtained by parallel transporting the local system on the fiberwise Lagrangian torus $\tau_x$) along the boundaries. We first deform $S\subset\mathbb{C}$ (the region bounded by $\gamma_{1,t}$ and $\gamma_{1,t'}$, see \cref{fig:L1}) to a disk centered at $1\in\mathbb{C}$ by deforming the (non-smooth) Lagrangian $\bigcup_{s\in\partial S}\tau_x$ to a product torus such that the holonomy of the local system along the deformed curve (i.e., the circle in the base surrounding the critical value $1\in\mathbb{C}$) in the deformed boundary is trivial. Let $[D_{\pm}]\in H_2(W_k,T_x(t')\cup T_x(t))\cong\mathbb{Z}^2$ be the two relative homology classes of the sections of $\pi\colon W_k\rightarrow\mathbb{C}$ contributing to $\mathfrak{s}_x$ obtained from \cref{proposition:disk}. By convention $[D_+]$ is the homology class whose intersection number with $F^+$ is $1$. Following the proof of \cite[Proposition 5.26]{aa}, we define a $2$-chain $D_{+-}$ on $W_k$ relative to $T_x(t')\cup T_x(t)$ as follows. Pick a point $s\in\partial S$ and a path $\gamma$ connecting $s$ to $1$. The chain $D_{+-}$ fibers over $\gamma$, with the fiber over each point $z\in\gamma$ being an orbit of the $S^1$-action rotating the disk fibers in the tubular neighborhood of $\{x_1+x_2^k=1\}\subset(\mathbb{C}^\ast)^2$, where the coordinates $(x_1,x_2)$ on the fiber $\pi^{-1}(z)$ are given by parallel transport of the chosen coordinates on $\pi^{-1}(\frac{1}{2})$. Note that when approaching the singular fiber over $1$ along the path $\gamma$, these $S^1$-orbits collapse to a point on $F^+\cap F^-\cong\mathbb{C}^\ast$. For a suitable choice of the orientation, the chain $D_{+-}$ intersects $F^-$ once positively, and $F^+$ once negatively, so we have $[D_{+-}]=[D_-]-[D_+]$ in $H_2(W_k,T_x(t')\cup T_x(t))$. Since the restriction of the Lagrangian $T_x(t')\cup T_x(t)$ to $\partial S$ is isotopic to the product torus $\tau_x\times S^1\cong T^3$ in $W_k$, and $\partial D_{\pm}$ represent the class $(-k,1,1)\in H_1(T^3)\cong\mathbb{Z}^3$, by our choice of the local system on $\tau_x$, the holonomy along the circle $\partial D_{+-}$ is $x_1^kx_2^{-1}$. Combining this with \cref{proposition:disk}, this is precisely the contribution to the Floer differential of $\beta$.
    \end{proof}
    
    \begin{remark}\label{remark:Z}
    When working over $\mathbb{Z}$ instead of a field $\mathbb{K}$, in order to apply the above trick for the computation of the Laurent polynomial $g_2(x)\in\mathbb{Z}[x_1^{\pm1},x_2^{\pm1}]$, one needs to consider the fiberwise tori $\tau_x\subset(\mathbb{C}^\ast)^2$ for $x\in(\mathbb{Z}\setminus\{0\})^2$. Since the holonomies along the circle factors are assumed to be $x_i^{-1}$ for $i \in \{1,2\}$, this causes a priori problems. In order to make sense of the above argument, we need to work temporarily over $\mathbb{Q}$, where $g_2(x)$ for any $x\in(\mathbb{Z}\setminus\{0\})^2$ belongs. Equip the tori $\tau_x$ with $\mathbb{Q}$-local systems and study the Floer complex given by the mapping cone
    \[
    \left\{H^\ast(T^2;\mathbb{Q})[1]\overset{g_2(x)}{\longrightarrow}H^\ast(T^2;\mathbb{Q})\right\}
    \]
    for $x\in(\mathbb{Z}\setminus\{0\})^2$. However, since our computation shows that $g_2(x)$ only involves positive powers of $x_1$ and $x_2$, we actually have $g_2(x)\in\mathbb{Z}$ for a particular $x$. The same remark also applies to the undetermined Laurent polynomials $f(x)$ and $g_1(x)$ appearing in the computations of the Floer differential and the Floer product below, where we always have $f(x),g_1(x)\in\mathbb{Z}[x_1,x_2]$. Since the field $\mathbb{Q}$ is only used for auxiliary purposes, our computations here make sense over $\mathbb{Z}$.
    \end{remark}
    
    \subsubsection{The case of $L_0$}\label{sec:complex_L0}
    The computation of the fiberwise wrapped Fukaya $A_\infty$-algebra $\mathit{CW}_\pi^\ast(L_0,L_0)$ is very similar to that of $L_1$, with some additional complexities arising in counting $J$-holomorphic sections that will be mentioned below. For later purposes, we shall mention here the facts parallel to \eqref{eq:add}, \eqref{eq:prod} and \eqref{eq:add1}. We perturb the Lagrangian submanifold $L_0$ using the fiberwise Hamiltonian vector field $X_H$ and the isotopy $\rho^t$ on the base, and consider the Floer complex $\mathit{CF}^\ast(L_0(t'),L_0(t))$, where $t'-t>t_0$ is generic and sufficiently large in the sense that $\rho^{t_0}$ pushes the ray $e^{-i\theta_0}\mathbb{R}_+$ past $e^{i\theta_0}\mathbb{R}_+$, see \cref{fig:L0}.
    \begin{figure}[!htb]
    	\centering
    	\includegraphics{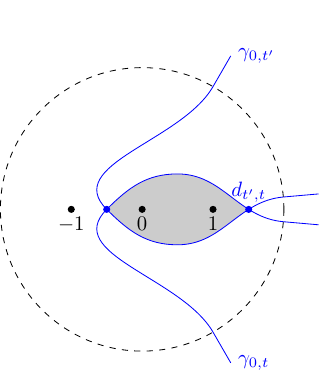}
    	\caption{The arcs $\gamma_{0,t}$ and $\gamma_{0,t'}$ with a shaded bigon contributing to the Floer differential.}\label{fig:L0}
    \end{figure}
    Note that $\gamma_{0,t}\cap\gamma_{0,t'}=\{-\frac{1}{2},d_{t',t}\}$. Define $\ell_0(t)=\phi^t(\ell_0)$ and define $\ell_{0,-}(t')$ and $\ell_{0,+}(t)$ to be the parallel transports of $\ell_0(t')$ and $\ell_0(t)$, respectively, from $-\frac{1}{2}$ to $d_{t',t}$. As in the case of $\ell_1$, we can choose the wrapping Hamiltonian $H$ so that when $t'-t$, $t''-t'$ and $t''-t$ are generic and $t_0'\geq t_0$ is sufficiently large, the following statements are true:
    
    \begin{itemize}
    	\item The fiberwise Floer complex $\mathit{CF}^\ast(\ell_0(t'),\ell_0(t))$ is supported in degree $0$ with vanishing differential, and
    	\[
        \mathit{CF}^0(\ell_0(t'),\ell_0(t))\cong\bigoplus_{p\in P_0(t'-t)\cap(2\pi\mathbb{Z})^2}\mathbb{K}\cdot\delta_p^{t'\rightarrow t}
        \]
    	for some square $P_0(t'-t)\subset\mathbb{R}^2$.
    	\item We have the fiberwise Floer product
    	\[
        \delta_p^{t'\rightarrow t}\cdot\delta_p^{t''\rightarrow t'}=\delta_{p+p'}^{t''\rightarrow t}
        \]
    	for $p\in P_0(t'-t)\cap(2\pi\mathbb{Z})^2$ and $p'\in P_0(t''-t')\cap(2\pi\mathbb{Z})^2$.
    	\item For $t'-t>t_0'$, the fiberwise Floer complex $\mathit{CF}^\ast(\ell_{0,-}(t'),\ell_{0,+}(t))$ is supported in degree $0$ with vanishing differential, and
    	\[
        \mathit{CF}^0(\ell_{0,-}(t'),\ell_{0,+}(t))\cong\bigoplus_{q\in Q_0(t'-t)\cap(2\pi\mathbb{Z})^2}\mathbb{K}\cdot\eta_q^{t'\rightarrow t}
        \]
    	for some square $Q_0(t'-t)\subset\mathbb{R}^2$.
    \end{itemize}
    
    \begin{proposition}\label{proposition:f}
    The $A_\infty$-algebra $\mathit{CW}^\ast_\pi(L_0,L_0)$ is quasi-isomorphic to the dg algebra with underlying associative algebra $\mathbb{K}[x_1^{\pm1},x_2^{\pm1},\chi]/(\chi^2)$, grading
    \[
    |x_1|=|x_2|=0, \quad |\chi|=-1,
    \]
    and differentials
    \[
    dx_1=dx_2=0, \quad d\chi=f(x),
    \]
    where $f(x)\in\mathbb{K}[x_1^{\pm1},x_2^{\pm1}]$.
    \end{proposition}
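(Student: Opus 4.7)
The plan is to mimic the proof of \cref{proposition:diff-b} essentially verbatim, with the ingredients already set up just before the proposition. By \cref{lemma:hocolim}, it suffices to compute $\hocolim_{t\to -\infty}\mathit{CF}^\ast(L_0(t'),L_0(t))$ for $t'-t$ generic. For $t'-t>t_0$, the admissible arcs $\gamma_{0,t}$ and $\gamma_{0,t'}$ meet in the two points $-\tfrac{1}{2}$ and $d_{t',t}$ (see \cref{fig:L0}), so the same argument as in \eqref{eq:single}, based on \cite[Proposition 5.2]{aa}, gives a decomposition
\[
\mathit{CF}^\ast(L_0(t'),L_0(t)) \cong \mathit{CF}^\ast(\ell_0(t'),\ell_0(t)) \oplus \mathit{CF}^\ast(\ell_{0,-}(t'),\ell_{0,+}(t))[1]
\]
as a mapping cone, with off-diagonal differential $\mathfrak{s}_{\ell_0,t',t}$ defined by counting $J$-holomorphic sections of $\pi$ over the bigon region bounded by $\gamma_{0,t}$ and $\gamma_{0,t'}$.

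Next I would use the three bulleted facts stated just before the proposition to identify the two fiberwise Floer complexes with the subspaces of $\mathbb{K}[x_1^{\pm1},x_2^{\pm1}]$ whose Newton polytopes lie in the squares $\tfrac{1}{2\pi}P_0(t'-t)$ and $\tfrac{1}{2\pi}Q_0(t'-t)$, sending $\delta_p^{t'\to t}$ and $\eta_q^{t'\to t}$ to the corresponding Laurent monomials. The fiberwise products then become multiplication of Laurent polynomials, and the analogs of \eqref{eq:p1} and \eqref{eq:p2} yield the Leibniz rule governing the action of the off-diagonal entry, so that $\mathfrak{s}_{\ell_0,t',t}$ is determined by its value on the generator $\eta_0^{t'\to t}$. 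Call this value $f(x)\in\mathbb{K}[x_1^{\pm1},x_2^{\pm1}]$.

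By the analog of \cite[Proposition 5.16]{aa}, the connecting maps in the homotopy colimit are given by pre-composition with the quasi-units $\delta_0^{t'\to t}$, which under the above identification become inclusions of Laurent polynomial subspaces with larger Newton polytopes as $t'-t$ grows. Taking the hocolim therefore produces the mapping cone
\[
\bigl\{\mathbb{K}[x_1^{\pm1},x_2^{\pm1}]\xrightarrow{f(x)}\mathbb{K}[x_1^{\pm1},x_2^{\pm1}]\bigr\},
\]
and identifying the image of $\eta_0^{t'\to t}$ in the hocolim with the degree $-1$ generator $\chi$ gives precisely the claimed underlying chain complex and associative product structure.

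Finally, the vanishing of the higher $A_\infty$-operations $\mu^k$ for $k\geq 3$ follows by the same degree/formality argument used at the end of \cref{proposition:diff-b}: the complex is concentrated in degrees $-1$ and $0$, so $\mu^k=0$ for $k\geq 4$, and $\mu^3$ can only be non-zero on three degree $0$ entries, which lie in the fiber $\pi^{-1}(-\tfrac{1}{2})\cong (\mathbb{C}^\ast)^2$ where all higher operations vanish. The only real content beyond bookkeeping is the reduction to \cite[Proposition 5.2]{aa}, and the main genuine obstacle is deferred: unlike the $L_1$ case, the bigon bounded by $\gamma_{0,t}$ and $\gamma_{0,t'}$ contains two critical values $-1$ and $0$ of $\pi$, so the explicit identification of $f(x)$ (via a later analog of \cref{proposition:Tx}, \cref{proposition:disk}, and \cref{proposition:g}) will require an additional case analysis of which irreducible components of the two singular fibers the $J$-holomorphic sections pass through. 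That computation, however, is not needed for the present statement, which only asserts the structural form of the dg algebra.
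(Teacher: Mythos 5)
Your proposal is correct and follows essentially the same route as the paper, which itself proves this proposition by running the argument of \cref{proposition:diff-b} verbatim for $L_0$: the mapping-cone decomposition over the two intersection points $-\tfrac12$ and $d_{t',t}$, the identification of fiberwise generators with Laurent monomials, the hocolim via quasi-units, and the degree argument killing $\mu^{\geq 3}$. You also correctly observe that the only genuinely new content — the determination of $f(x)$, complicated by the bigon containing two critical values — is deferred, exactly as the paper defers it to \cref{corollary:f=ug}.
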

    \begin{proof}
    As in the proof of \cref{proposition:diff-b}, the generators $x_i^\pm$ come from $\delta_p^{t'\rightarrow t}$ after passing to the homotopy colimit as $t\rightarrow\infty$, while the generator $\chi$ comes from $\eta_q^{t'\rightarrow t}$. As in the proof of \cref{proposition:diff-b}, the differential
    \[
    \mathfrak{s}=\mathfrak{s}_{\ell_0,t',t}\colon \mathit{CF}^0(\ell_{0,-}(t'),\ell_{0,+}(t))\longrightarrow\mathit{CF}^0(\ell_0(t'),\ell_0(t)),
    \]
    which is given by counting $J$-holomorphic sections of $\pi\colon W_k\rightarrow\mathbb{C}$ over the region bounded by the arcs $\gamma_{0,t}$ and $\gamma_{0,t'}$ (the shaded bigon in \cref{fig:L0}), can be identified with the multiplication by a Laurent polynomial $f(x)\in\mathbb{K}[x_1^{\pm1},x_2^{\pm1}]$ after passing to the homotopy colimit as $t\rightarrow\infty$.
    \end{proof}

    The computation of the Laurent polynomial $f(x)$ is slightly more complicated than the computation of $g_2(x)$ in \cref{proposition:g}, as the bigon formed by $\gamma_{0,t}$ and $\gamma_{0,t'}$ (depicted in \cref{fig:L0}) contains two critical values as opposed to only a single one. This will be postponed to the next subsection, see \cref{corollary:f=ug}.
    
    \subsection{The product structure}\label{section:product}
    
    In this subsection, we compute the Floer products that involve both of the admissible Lagrangians $L_0$ and $L_1$ defined in \cref{section:setup} and whose Floer complexes are computed in \cref{sec:complex_L1,sec:complex_L0}. To compute the products we first perturb the Lagrangians $L_0$ and $L_1$ using the isotopy $\rho^t$ on the base and the fiberwise Hamiltonian flow $X_H$ to obtain transversely intersecting admissible Lagrangian submanifolds $L_0(t)$, $L_1(t')$ and $L_0(t'')$ with $t''>t'>t$ and $t''-t>t_0$, which project to the arcs $\gamma_{0,t}$, $\gamma_{1,t'}$ and $\gamma_{0,t''}$, see \cref{fig:product}. Let $\gamma_{1,t'} \cap \gamma_{0,t} = \{a_{t',t},b_{t',t}\}$ and $\gamma_{1,t'} \cap \gamma_{0,t''} = \{a_{t'',t'},b_{t'',t'}\}$. Let $\ell_{0,-}(t'')\subset\pi^{-1}(b_{t'',t})$ be the parallel transport of $\ell_0(t'')=\phi^{t''}(\ell_0)$ from $-\frac{1}{2}$ to $b_{t'',t'}$ along $\gamma_{0,t''}$, $\ell_{1,-}(t') \subset\pi^{-1}(b_{t'',t'})$ be the parallel transport of $\ell_1(t')$ from $\frac{1}{2}$ to $b_{t'',t'}$, $\ell_{1,+}(t') \subset\pi^{-1}(b_{t'',t'})$ be the parallel transport of $\ell_1(t)$ from $\frac{1}{2}$ to $b_{t',t}$, and $\ell_{0,+}(t) \subset\pi^{-1}(b_{t',t})$ be the parallel transport of $\ell_0(t)$ from $-\frac{1}{2}$ to $b_{t',t}$. It is clear that the Floer cochain complex $\mathit{CF}^\ast(L_1(t'),L_0(t))$ is isomorphic to the complex
    \[
    \left(\mathit{CF}^\ast(\ell_{1,+}(t'),\ell_{0,+}(t))\oplus\mathit{CF}^0(\ell_{1,-}(t'),\ell_{0,+}(t))[1],\begin{pmatrix}\partial_{0} & \mathfrak{s}  \\ 0 & \partial_{1}\end{pmatrix}\right)
    \]
    where $\mathfrak{s}$ is a chain map
    \[
    \mathfrak{s}=\mathfrak{s}_{\ell_1,\ell_0,t',t}\colon\mathit{CF}^0(\ell_{1,-}(t'),\ell_{0,+}(t))[1]\longrightarrow\mathit{CF}^0(\ell_{1,+}(t'),\ell_{0,+}(t))
    \]
    that is defined by a count of $J$-holomorphic sections of $\pi \colon W_k\rightarrow\mathbb{C}$ over the region bounded by $\gamma_{0,t}$ and $\gamma_{1,t'}$ (the shaded bigon in \cref{fig:product_A} with vertices $b_{t',t}$ and $a_{t',t}$ surrounding the critical value $1$). The Floer complex $\mathit{CF}^\ast(L_0(t''),L_1(t'))$ has a similar description, with the fiberwise generators above the intersection point $b_{t'',t'}$ lying in degree $0$ and the generators above $a_{t'',t'}$ lying in degree $-1$. The same argument used when dealing with the case of $\mathit{CF}^\ast(\ell_{1,-}(t'),\ell_{1,+}(t))$ implies that when $t'-t$, $t''-t'$ and $t''-t$ generic and the real numbers $t_{01}^\pm,t_{10}^\pm$ are sufficiently large, we can choose the convex wrapping Hamiltonian $H$ such that the following is true:
    \begin{itemize}
    \item For $t''-t'>t_{01}^-$, the fiberwise Floer complex $\mathit{CF}^\ast(\ell_{0,-}(t''),\ell_{1,-}(t'))$ is supported in degree $0$ with vanishing differential, and there is a square $Q_{01}^-(t''-t')\subset\mathbb{R}^2$ such that
        \[
        \mathit{CF}^0(\ell_{0,-}(t''),\ell_{1,-}(t'))\cong\bigoplus_{q\in Q_{01}^-(t''-t')\cap(2\pi\mathbb{Z})^2}\mathbb{K}\cdot\kappa_q^{t''\rightarrow t'},
        \]
        where $\kappa_q^{t''\rightarrow t'}$ corresponds to an intersection point of $\ell_{0,-}(t'')$ and $\ell_{1,-}(t')$ in the fiber $\pi^{-1}(b_{t'',t'})$.
    	\item For $t''-t'>t_{01}^+$, the fiberwise Floer complex $\mathit{CF}^\ast(\ell_{0,-}(t''),\ell_{1,+}(t'))$ is supported in degree $0$ with vanishing differential, and there is a square $Q_{01}^+(t''-t')\subset\mathbb{R}^2$ such that
    	\[
        \mathit{CF}^0(\ell_{0,-}(t''),\ell_{1,+}(t'))\cong\bigoplus_{q\in Q_{01}^+(t''-t')\cap(2\pi\mathbb{Z})^2}\mathbb{K}\cdot\lambda_q^{t''\rightarrow t'},
        \]
        where $\lambda_q^{t''\rightarrow t'}$ corresponds to an intersection point of $\ell_{0,-}(t'')$ and $\ell_{1,+}(t')$ in the fiber $\pi^{-1}(a_{t'',t'})$.
    	\item For $t'-t>t_{10}^+$, the fiberwise Floer complex $\mathit{CF}^\ast(\ell_{1,+}(t'),\ell_{0,+}(t))$ is supported in degree $0$ with vanishing differential, and there is a square $Q_{10}^+(t'-t)\subset\mathbb{R}^2$ such that
        \begin{equation}\label{eq:mu}
        \mathit{CF}^0(\ell_{1,+}(t'),\ell_{0,+}(t))\cong\bigoplus_{q\in Q_{10}^+(t'-t)\cap(2\pi\mathbb{Z})^2}\mathbb{K}\cdot\mu_q^{t'\rightarrow t},
        \end{equation}
    	where $\mu_q^{t'\rightarrow t}$ corresponds to an intersection point of $\ell_{1,+}(t')$ and $\ell_{0,+}(t)$ in the fiber $\pi^{-1}(b_{t',t})$.
    	\item For $t'-t>t_{10}^-$, the fiberwise Floer complex $\mathit{CF}^\ast(\ell_{1,-}(t'),\ell_{0,+}(t'))$ is supported in degree $0$ with vanishing differential, and there is a square $Q_{10}^-(t'-t)\subset\mathbb{R}^2$ such that
    	\[
        \mathit{CF}^0(\ell_{1,-}(t'),\ell_{0,+}(t))\cong\bigoplus_{q\in Q_{10}^-(t'-t)\cap(2\pi\mathbb{Z})^2}\mathbb{K}\cdot\nu_q^{t'\rightarrow t},
        \]
    	where $\nu_q^{t'\rightarrow t}$ corresponds to an intersection point of $\ell_{1,-}(t')$ and $\ell_{0,+}(t)$ in the fiber $\pi^{-1}(a_{t',t})$.
    \end{itemize}
    
    \begin{figure}[!htb]
    	\centering
        \begin{subfigure}{0.45\textwidth}
            \includegraphics{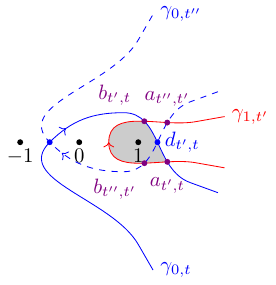}
            \caption{}\label{fig:product_A}
        \end{subfigure}
    	\begin{subfigure}{0.45\textwidth}
    	    \includegraphics{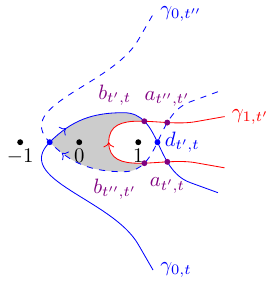}
            \caption{}\label{fig:product_B}
    	\end{subfigure}
    	\caption{The arcs $\gamma_{0,t}$, $\gamma_{1,t'}$ and $\gamma_{0,t''}$, and shaded triangles contributing to the Floer product \eqref{eq:tp1}.}\label{fig:product}
    \end{figure}
    
    In this case, there is a non-trivial triangle product, namely
    \begin{equation}\label{eq:tp1}
    \mathit{CF}^0(L_1(t'),L_0(t))\otimes\mathit{CF}^0(L_0(t''),L_1(t'))\longrightarrow\mathit{CF}^0(L_0(t''),L_0(t)),
    \end{equation}
    which is determined by counting $J$-holomorphic sections of $\pi\colon W_k\rightarrow\mathbb{C}$ above the triangle with vertices $b_{t'',t'},b_{t',t}$ and $-\frac{1}{2}$, which is shaded in \cref{fig:product_B}. In terms of the fiberwise Floer complexes, this is given by a map
    \[
    \mathfrak{p}\colon\bigoplus_{q\in Q_{10}^+(t'-t)\cap(2\pi\mathbb{Z})^2}\mathbb{K}\cdot\mu_q^{t'\rightarrow t}\otimes\bigoplus_{q'\in Q_{01}^-(t''-t')\cap(2\pi\mathbb{Z})^2}\mathbb{K}\cdot\kappa_{q'}^{t''\rightarrow t'}\longrightarrow\bigoplus_{p\in P_0(t''-t)\cap(2\pi\mathbb{Z})^2}\mathbb{K}\cdot\delta_p^{t''\rightarrow t}.
    \]
    After passing to the homotopy colimits as $t,t'\rightarrow\infty$ and identifying the generators $\mu_q^{t'\rightarrow t}$ with $x^{\bar{q}}$, $\kappa_{q'}^{t''\rightarrow t'}$ with $x^{\bar{q}'}$ and $\delta_p^{t''\rightarrow t}$ with $x^{\bar{p}}$ (where $q=2\pi\bar{p}$, $q'=2\pi\bar{q}'$ and $p=2\pi\bar{p}$ as before), the map $\mathfrak{p}$ is identified with the composition
    \[
    \mathbb{K}[x_1^{\pm1},x_2^{\pm1}]\otimes\mathbb{K}[x_1^{\pm1},x_2^{\pm1}]\longrightarrow\mathbb{K}[x_1^{\pm1},x_2^{\pm1}]\xrightarrow{g_1}\mathbb{K}[x_1^{\pm1},x_2^{\pm1}],
    \]
    where the first map is given by the product of Laurent polynomials and the second map is multiplication with $g_1(x)\in\mathbb{K}[x_1^{\pm1},x_2^{\pm1}]$ to be determined below.
    
    To find $g_1(x)$, one needs to count $J$-holomorphic sections of $\pi\colon W_k\rightarrow\mathbb{C}$ with boundary conditions on $L_0(t)\cup L_1(t')\cup L_0(t'')$. The same trick of replacing $L_0(t)$, $L_1(t')$ and $L_0(t'')$ with the corresponding Lagrangian cylinders $T_x(t)$, $T_x(t')$ and $T_x(t'')$ (obtained by parallel transporting the product tori $\tau_x\subset\pi^{-1}(-\frac{1}{2})$ and $\tau_x\subset\pi^{-1}(\frac{1}{2})$ over the paths $\gamma_{0,t}$, $\gamma_{1,t'}$ and $\gamma_{0,t''}$, respectively) used in the computation of $g_2(x)$ in the proof of \cref{proposition:g} still works here, thanks to the following analog of \cref{proposition:Tx}.
    
    \begin{lemma}\label{lemma:px}
    For $t'-t>t_{10}^+$ and $t''-t'>t_{01}^-$, the Floer product
    \begin{equation}\label{eq:TTT}
    \mathit{CF}^\ast(T_x(t'),T_x(t))\otimes\mathit{CF}^\ast(T_x(t''),T_x(t'))\longrightarrow\mathit{CF}^\ast(T_x(t''),T_x(t))
    \end{equation}
    is given by
    \[
    H^\ast(T^2;\mathbb{K})\otimes H^\ast(T^2;\mathbb{K})\overset{\mathfrak{p}_x}{\longrightarrow}H^\ast(T^2;\mathbb{K}),
    \]
    where $\mathfrak{p}_x$ is the cup product composed with multiplication by $g_1(x)\in\mathbb{K}$.
    \end{lemma}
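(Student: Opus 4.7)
The plan is to proceed in direct analogy with the proof of \cref{proposition:Tx} (itself modeled on \cite[Proposition 5.21]{aa}), adapted from the differential---which counts sections over a bigon---to the triangle product, which counts sections over a triangular region bounded by three arcs. As in the derivation of \cref{proposition:g}, the key simplification of replacing $L_0$ and $L_1$ by the cylinders $T_x$ is that the fiberwise Floer theory of a single Clifford torus in $(\mathbb{C}^\ast)^2$ is explicit and classical.

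First I would pick the fiberwise wrapping Hamiltonian $H$ to be a perfect Morse function on each relevant copy of $\tau_x$---in the fibers $\pi^{-1}(-\frac{1}{2})$, $\pi^{-1}(b_{t',t})$ and $\pi^{-1}(b_{t'',t'})$ at the three corners of the triangle. Exactly as in \cref{proposition:Tx}, this identifies each of the three Floer complexes $\mathit{CF}^\ast(T_x(t'),T_x(t))$, $\mathit{CF}^\ast(T_x(t''),T_x(t'))$ and $\mathit{CF}^\ast(T_x(t''),T_x(t))$ with a shift of $H^\ast(T^2;\mathbb{K})$, with generators labelled by critical points of the Morse function. Under the standing assumptions $t'-t>t_{10}^+$ and $t''-t'>t_{01}^-$, the fiberwise intersections lie in the asymptotically stabilized regime, so this Morse--Bott identification is clean.

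Next I would study the moduli space of $J$-holomorphic triangles $u\colon D^2\setminus\{z_0,z_1,z_2\}\to W_k$ with boundary components on $T_x(t)$, $T_x(t')$, $T_x(t'')$ in cyclic order and asymptotic to the chosen generators at the three punctures. Since $J$ agrees with the standard complex structure outside a small neighborhood of $\mathit{Crit}(\pi)$ and $\pi$ is $J$-holomorphic there, the projection $\pi\circ u$ is a $J$-holomorphic map of the punctured disk into $\mathbb{C}$ with boundary on $\gamma_{0,t}\cup\gamma_{1,t'}\cup\gamma_{0,t''}$ and asymptotic to $b_{t',t}$, $b_{t'',t'}$ and $-\frac{1}{2}$. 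By the open mapping theorem and the asymptotic constraints, $\pi\circ u$ is the unique holomorphic parametrization of the triangular region in $\mathbb{C}$ bounded by these arcs (the shaded triangle in \cref{fig:product_B}). Each contribution to the Floer product therefore factors as a section of $\pi$ over this triangle together with fiberwise cycles at the three corners.

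The fiberwise contributions recover the topological cup product on $H^\ast(T^2;\mathbb{K})$ via the $T^2$-symmetry of $\tau_x$ and the standard Morse--Bott calculation for the Clifford torus in $(\mathbb{C}^\ast)^2$, as in \cite[Proposition 5.21]{aa}. The remaining factor is the signed count of $J$-holomorphic sections of $\pi$ over the triangular region passing through a generic point of $\tau_x$ at, say, the corner $-\frac{1}{2}$, weighted by the local system holonomies along the boundary; this scalar is by definition $g_1(x)\in\mathbb{K}$. Combining these identifies $\mathfrak{p}_x$ with the cup product followed by multiplication by $g_1(x)$, as claimed. The main obstacle I anticipate is the compactness and transversality analysis underwriting this clean splitting into base and fiber: namely, ruling out disk bubbles in the fibers (handled by exactness of $\tau_x$ in the fibers) and the escape of sections into neighborhoods of the singular fibers over $-1$, $0$ or $1$ (controlled by the maximum principles \cite[Propositions 3.10 and 3.11]{aa}, whose hypotheses were arranged in \cref{lemma:setup}). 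Granting these, the argument reduces to the one in \cite{aa} up to cosmetic modifications to accommodate the Morse--Bott--Lefschetz setting and the absence of area-weighting noted in \cref{remark:weight}.
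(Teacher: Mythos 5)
Your overall picture—project holomorphic triangles to the base, identify them with sections over the shaded triangle of \cref{fig:product_B}, and split the count into a fiberwise part and a base part—is the right geometric intuition, and the fiberwise/cup-product part is indeed handled in the paper by citing \cite[Lemma 5.22]{aa}. But there is a genuine gap at the decisive step: you write that the weighted count of sections over the triangle with boundary on $T_x(t)\cup T_x(t')\cup T_x(t'')$ ``is by definition $g_1(x)$.'' It is not. The Laurent polynomial $g_1$ is defined by the triangle product $\mathfrak{p}$ for the Lagrangians $L_0(t)$, $L_1(t')$, $L_0(t'')$, whose fibers are the non-compact Lagrangians $\ell_0,\ell_1\cong(\mathbb{R}_+)^2$, not the tori $\tau_x$ with local systems. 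The entire content of \cref{lemma:px} is that the scalar governing the torus product coincides with the evaluation of that Laurent polynomial at $x$; assuming this is assuming the conclusion. Without it, the subsequent computation in \cref{proposition:u} (which counts sections with torus boundary conditions and finds $x_1-1$) would tell you nothing about $g_1$.

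The paper closes this gap by a purely algebraic route that you do not reproduce: it introduces the mixed Floer complexes $\mathit{CF}^\ast(L_0(t'),T_x(t))$ and $\mathit{CF}^\ast(L_1(t''),T_x(t))$, each with a single generator in the relevant degree, and uses associativity of the Floer product together with shrinking arguments for triangles containing no critical values (yielding the identities \eqref{eq:id1}--\eqref{eq:id4}) to transport the defining relation $\mu_q\cdot\kappa_{q'}=g_1(x)\delta_{q+q'}$ from the $L$-side to the relation $e_x\cdot g_1(x)e_x=g_1(x)^2e_x$ on the $T_x$-side in \eqref{eq:id5}. If you want to keep your direct moduli-space approach instead, you would need to supply an argument comparing the two section counts with their different boundary conditions and weightings (holonomy of the local system on $\tau_x$ versus the lattice-point bookkeeping for the $\ell$'s), which is essentially what the mixed-complex argument accomplishes; as written, your proof does not establish the lemma.
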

    \begin{proof}
    For $t'-t>t_0$, the Lagrangian submanifolds $L_0(t')$ and $T_x(t)$ intersect transversely at a unique point in the fibers $\pi^{-1}(-\frac{1}{2})$ and $\pi^{-1}(d_{t',t})$, denote by $\varepsilon_{x,0}^{t'\rightarrow t}\in\mathit{CF}^0(\ell_0(t'),\tau_x)$ and $\sigma_{x,0}^{t'\rightarrow t}\in\mathit{CF}^0(\ell_{0,-}(t'),\tau_x)$ the corresponding Floer generators. The Floer complex $\mathit{CF}^\ast(L_0(t'),T_x(t))$ is generated by $\varepsilon_{x,0}^{t'\rightarrow t}$ (in degree $0$) and $\sigma_{x,0}^{t'\rightarrow t}$ (in degree $-1$), and the differential is given by counting $J$-holomorphic sections of $\pi\colon W_k\rightarrow\mathbb{C}$ over the region bounded by the curves $\gamma_{0,t}$ and $\gamma_{0,t'}$. Similarly, the complex $\mathit{CF}^\ast(L_1(t''),T_x(t))$ is generated by $\varepsilon_{x,1}^{t''\rightarrow t}\in\mathit{CF}^0(\ell_{1,+}(t''),\tau_x)$ and $\sigma_{x,1}^{t''\rightarrow t}\in\mathit{CF}^0(\ell_{1,-}(t''),\tau_x)$, which correspond to intersection points in the fibers $\pi^{-1}(b_{t'',t})$ and $\pi^{-1}(a_{t'',t})$, respectively, see \cref{fig:product2}. 
    \begin{figure}[!htb]
    	\centering
        \includegraphics{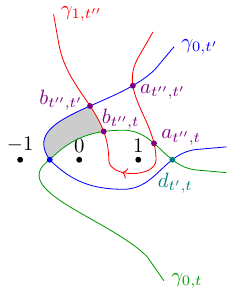}
    	\caption{The arcs $\gamma_{0,t}$, $\gamma_{0,t'}$ and $\gamma_{1,t''}$, and a shaded triangle contributing to the Floer product \eqref{eq:LLT}.}\label{fig:product2}
    \end{figure}
    Consider the triangle product
    \begin{equation}\label{eq:LLT}
    \mathit{CF}^\ast(L_0(t'),T_x(t))\otimes\mathit{CF}^\ast(L_1(t''),L_0(t'))\longrightarrow\mathit{CF}^\ast(L_1(t''),T_x(t)),
    \end{equation}
    for $t''>t'>t$, which is obtained by counting $J$-holomorphic sections of $\pi\colon W_k\rightarrow\mathbb{C}$ over the triangle formed by the curves $\gamma_{0,t}$, $\gamma_{0,t'}$ and $\gamma_{1,t''}$, with vertices labeled by $-\frac{1}{2}$, $b_{t'',t'}$ and $b_{t'',t}$, depicted in \cref{fig:product2}. Since its interior contains no critical values of $\pi$, we can shrink the triangle to a single point via compactly supported isotopies of the arcs $\gamma_{0,t}$, $\gamma_{0,t'}$ and $\gamma_{1,t''}$, and reduce the count to that inside a fiber, see for example the proof of \cite[Proposition 5.14]{aa}. This implies that for $q=2\pi\bar{q}\in Q_{10}^+(t''-t')\cap(2\pi\mathbb{Z})^2$, we have
    \begin{equation}\label{eq:id1}
    \varepsilon_{x,0}^{t'\rightarrow t}\cdot\mu_q^{t''\rightarrow t'}=x^{\bar{q}}\varepsilon_{x,1}^{t''\rightarrow t},
    \end{equation}
    where $\mu_q^{t''\rightarrow t'}$ are generators in the fiber $\pi^{-1}(b_{t'',t'})$, see \eqref{eq:mu}. On the other hand, it follows from \cite[Proposition 5.15]{aa} that for $p=2\pi\bar{p}\in P_0(t''-t')\cap(2\pi\mathbb{Z})^2$, we have the identity
    \begin{equation}\label{eq:id2}
    \varepsilon_{x,0}^{t'\rightarrow t}\cdot\delta_p^{t''\rightarrow t'}=x^{\bar{p}}\varepsilon_{x,0}^{t''\rightarrow t}
    \end{equation}
    computing the Floer product in the fiber $\pi^{-1}(-\frac{1}{2})$.
    
    By our assumption for the Floer product \eqref{eq:tp1}, with $t''-t'>t_{10}^+$ and $t'''-t''>t_{01}^-$, which are both assumed to be generic, we have
    \[
    \mu_{q}^{t''\rightarrow t'}\cdot\kappa_{q'}^{t'''\rightarrow t''}=g_1(x)\delta_{q+q'}^{t'''\rightarrow t'},
    \]
    where $q\in Q_{10}^+(t''-t)\cap(2\pi\mathbb{Z})^2$ and $q'\in Q_{01}^-(t'''\rightarrow t'')\cap(2\pi\mathbb{Z})^2$. Multiplying by $\varepsilon_{x,0}^{t'\rightarrow t}$ on the left and use the identities \eqref{eq:id1} and \eqref{eq:id2}, we obtain
    \begin{equation}\label{eq:ek}
    \varepsilon_{x,1}^{t'\rightarrow t}\cdot\kappa_{q'}^{t''\rightarrow t'}=x^{\bar{q}'}g_1(x)\varepsilon_{x,0}^{t''\rightarrow t},
    \end{equation}
    where we have renamed $t''$ and $t'''$ to $t'$ and $t''$, respectively. This computes the Floer product
    \begin{equation}\label{eq:LLT1}
    \mathit{CF}^0(L_1(t'),T_x(t))\otimes\mathit{CF}^0(L_0(t''),L_1(t'))\longrightarrow\mathit{CF}^0(L_0(t''),T_x(t)).
    \end{equation}
    
    In a similar fashion, we can further replace the $L_0(t'')$ in \eqref{eq:LLT1} by multiplying the right hand side of \eqref{eq:ek} by the generator $\psi_{x,0}^{t'''\rightarrow t''}\in\mathit{CF}^0(\tau_x,\ell_0(t'''))$. Using a shrinking argument similar to the one described above, one can prove the identities
    \[
    \kappa_{q'}^{t''\rightarrow t'}\cdot\psi_{x,0}^{t'''\rightarrow t''}=x^{\bar{q}'}\psi_{x,1}^{t'''\rightarrow t'}
    \]
    for $q'\in Q_{01}^-(t'''\rightarrow t'')\cap(2\pi\mathbb{Z})^2$, where $\psi_{x,1}^{t'''\rightarrow t'}\in\mathit{CF}^0(\tau_x,\ell_{1,+}(t'''))$ is the unique generator, and
    \begin{equation}\label{eq:id3}
    \varepsilon_{x,0}^{t''\rightarrow t}\cdot\psi_{x,0}^{t'''\rightarrow t''}=e_x^{t'''\rightarrow t},
    \end{equation}
    where $e_x^{t'''\rightarrow t}\in\mathit{CF}^0(\tau_x,\tau_x)$ corresponds to the identity in the fiber above the intersection point $(\gamma_{0,t}\cap\gamma_{0,t'''})\setminus\{-\frac{1}{2}\}$. It follows that
    \begin{equation}\label{eq:id4}
    \varepsilon_{x,1}^{t'\rightarrow t}\cdot\psi_{x,1}^{t''\rightarrow t'}=g_1(x)e_x^{t''\rightarrow t}.
    \end{equation}
    Again, we have renamed $t'''$ to $t''$ in \eqref{eq:id4}. Note that this determines the product
    \[
    \mathit{CF}^0(L_1(t'),T_x(t))\otimes\mathit{CF}^0(T_x(t''),L_1(t'))\longrightarrow\mathit{CF}^0(T_x(t''),T_x(t)).
    \]
    To finish the proof, we pick generic $t<t'<t''<t'''<t''''$ and compute the products
    \begin{equation}\label{eq:id5}
    \begin{split}
    e_x^{t''\rightarrow t}\cdot g_1(x)e_x^{t''''\rightarrow t''}&=\varepsilon_{x,0}^{t'\rightarrow  t}\cdot\psi_{x,0}^{t''\rightarrow t'}\cdot\varepsilon_{x,1}^{t'''\rightarrow t''}\cdot\psi_{x,1}^{t''''\rightarrow t'''} \\
    &=\varepsilon_{x,0}^{t'\rightarrow t}\cdot g_1(x)\mu_0^{t'''\rightarrow t'}\cdot\psi_{x,1}^{t''''\rightarrow t'''} \\
    &=g_1(x)\varepsilon_{x,1}^{t'''\rightarrow t}\cdot\psi_{x,1}^{t''''\rightarrow t'''} \\
    &=g_1(x)^2e_x^{t''''\rightarrow t},
    \end{split}
    \end{equation}
    where the first equality follows from \eqref{eq:id3} and \eqref{eq:id4}, the second equality follows from a domain shrinking argument as used in the derivation of \eqref{eq:id1}, the third equality is an application of \eqref{eq:id1}, and the last equality is a consequence of \eqref{eq:id4}. Note that \eqref{eq:id5} determines the triangle product \eqref{eq:TTT} in degree $0$. The claim in higher degrees follows from a direct application of \cite[Lemma 5.22]{aa}.
    \end{proof}
    
    \begin{proposition}\label{proposition:u}
    We have $g_1(x)=x_1-1$.
    \end{proposition}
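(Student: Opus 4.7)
My approach closely mirrors the computation of $g_2(x)$ given by the combination of \cref{proposition:Tx,proposition:disk,proposition:g}, with the bigon replaced by the triangle formed by $\gamma_{0,t}$, $\gamma_{1,t'}$ and $\gamma_{0,t''}$ shaded in \cref{fig:product_B}. By \cref{lemma:px}, the Floer triangle product on the Lagrangian cylinders is the cup product scaled by $g_1(x)\in\mathbb{K}$, so it suffices to determine the coefficient with which the degree $0$ generator of $H^\ast(T^2;\mathbb{K})\otimes H^\ast(T^2;\mathbb{K})$ is sent into $H^0(T^2;\mathbb{K})$. This in turn reduces to a signed count of $J$-holomorphic sections of $\pi\colon W_k\rightarrow\mathbb{C}$ over the triangle $T$ bounded by $\gamma_{0,t}$, $\gamma_{1,t'}$ and $\gamma_{0,t''}$, with boundary on $T_x(t)\cup T_x(t')\cup T_x(t'')$, and whose boundary passes through a prescribed point in $\tau_x\subset\pi^{-1}(-\tfrac{1}{2})$.

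The key geometric input is that the triangle $T$ encloses exactly one critical value of $\pi$, namely $-1\in\mathbb{C}$, whose vanishing cycle is the meridian $a$. Arguing as in \cref{proposition:disk}, any such section has intersection number $1$ with $\pi^{-1}(-1)=F^+\cup F^-$ and $0$ with $\pi^{-1}(0)$ and $\pi^{-1}(1)$, so it must meet exactly one of the two components $F^\pm$. This gives two homotopy classes $[D_+],[D_-]$, and inside each class the moduli space is a single $T^2$-orbit whose count through a generic point in $\tau_x$ is $1$.

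To compare the two homotopy classes, I will repeat the $D_{+-}$-trick of the proof of \cref{proposition:g}. After a compactly supported isotopy of the arcs shrinking $T$ to a small disk around $-1$, I construct a relative $2$-chain $D_{+-}$ fibered over a path $\gamma$ from a point $s\in\partial T$ to $-1$, whose fibers are $S^1$-orbits of the rotation preserving $\{x_1=1\}\subset(\mathbb{C}^\ast)^2$ (this is the local model of the vanishing cycle $a$ at $-1$), collapsing to a point on $F^+\cap F^-$ at the endpoint. For a suitable orientation, $D_{+-}$ intersects $F^-$ positively once and $F^+$ negatively once, so $[D_{+-}]=[D_-]-[D_+]$. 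The restriction of $T_x(t)\cup T_x(t')\cup T_x(t'')$ to $\partial T$ is isotopic to $\tau_x\times S^1\cong T^3$, and the class of $\partial D_{+-}$ in $H_1(T^3)\cong\mathbb{Z}^3$ equals $(-1,0,1)$; hence the local system holonomy along $\partial D_{+-}$ reads off as $x_1$. Combined with the sign $(-1)$ coming from the orientation comparison between $D_+$ and $D_-$ and from the reordering of boundary components in the triangle product (as opposed to the bigon in \cref{proposition:g}), this gives $g_1(x)=x_1-1$.

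The main technical obstacle is the bookkeeping of signs and orientations: unlike the bigon case, the triangle product carries an extra cyclic ordering of Lagrangian boundary conditions, and I expect the overall relative sign between the contributions of $D_+$ and $D_-$ to be the delicate point. Everything else, including the maximum principle, monomial admissibility, and the localization of disk counts to the enclosed critical value, carries over from the analogous steps in \cref{proposition:Tx,proposition:disk,proposition:g} essentially verbatim, with the product tori $\tau_x$ and the integrality argument of \cref{remark:Z} used to ensure that $g_1(x)\in\mathbb{Z}[x_1^{\pm 1},x_2^{\pm 1}]$.
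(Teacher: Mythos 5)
Your overall strategy is the right one and matches the paper's: replace the three Lagrangians by the cylinders $T_x(t)\cup T_x(t')\cup T_x(t'')$, invoke \cref{lemma:px} to reduce to a holonomy-weighted count of sections over the shaded triangle in \cref{fig:product_B}, find two homotopy classes each contributing a single $T^2$-orbit of sections, and compare them via a relative chain $D_{+-}$. However, there is a concrete error in your geometric input. The triangle with vertices $b_{t'',t'}$, $b_{t',t}$ and $-\tfrac{1}{2}$ encloses the critical value $0$, not $-1$; accordingly the two homotopy classes $[D_\pm]$ correspond to the two components of $\pi^{-1}(0)$, whose vanishing cycle is the longitude $b$, not the meridian $a$ (see \cref{section:morse-bott-lefschetz}). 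This is not a harmless relabeling: in the conventions of the proof of \cref{proposition:g}, the meridian direction $a$ is the one responsible for the variable $x_2$ (this is exactly why the bigon around $-1$ produces $d\alpha=(x_2+1)e_0$ in \cref{theorem:dga}), so if your triangle really enclosed $-1$ the holonomy of $\partial D_{+-}$ would read off as a power of $x_2$ and you would land on $x_2\pm 1$. Your subsequent claims that the $S^1$-orbits preserve $\{x_1=1\}$ and that $[\partial D_{+-}]$ yields the holonomy $x_1$ are the correct ones for the critical value $0$ with vanishing cycle $b$, but they contradict your stated identification of the enclosed critical value; as written the argument is internally inconsistent, and the fact that you reach $x_1-1$ is an artifact of mixing the two.

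Separately, you flag the relative sign between the two classes as "the delicate point" but do not actually resolve it, whereas this is precisely where the content lies: the raw holonomy-weighted count is $x_1+1$, and the minus sign comes from the observation that the boundary orientation induced on the arc $\gamma_{1,t'}$ (traversed from $b_{t'',t'}$ to $b_{t',t}$ along the triangle) is opposite to the orientation used on the product torus in \cref{proposition:g}, forcing a reversal of the orientation of $\partial D_{+-}$. Without this step you cannot distinguish $x_1-1$ from $x_1+1$, and the distinction is essential for the rest of the paper (e.g.\ for matching the relations $ab=(x_1-1)e_0$ with the mirror datum $t_1\mapsto x$). So: correct the enclosed critical value to $0$ and its vanishing cycle to $b$, and supply the orientation-reversal argument for the sign.
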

    \begin{proof}
    Consider the admissible Lagrangian cylinders $T_x(t),T_x(t'),T_x(t'')\subset W_k$ defined by parallel transporting the Lagrangian brane $\tau_x\subset\pi^{-1}(\pm\frac{1}{2})$ over the arcs $\gamma_{0,t},\gamma_{1,t'}$ and $\gamma_{0,t''}$ in \cref{fig:product_B}. By \cref{lemma:px}, in order to determine $g_1(x)$, we need to compute the product \eqref{eq:TTT}, which is defined by counting $J$-holomorphic sections of $\pi\colon W_k\rightarrow\mathbb{C}$ with boundaries on $T_x(t)\cup T_x(t')\cup T_x(t'')$ over the triangle with vertices $b_{t'',t'}$, $b_{t',t}$ and $-\frac{1}{2}$ (the shaded triangle in \cref{fig:product_B}, but remember now that the fibers over the arcs are $\tau_x$). There are two relative homotopy classes of such $J$-holomorphic sections, corresponding to the two irreducible components of the singular fiber $\pi^{-1}(0)$. The same argument as in \cite[Proposition 5.24]{aa} implies that the count of $J$-holomorphic sections in each of these homotopy classes is $1$. To identify their contributions to $g_1(x)$, we deform $T_x(t)\cup T_x(t')\cup T_x(t'')$ to a product torus as in the proof of \cref{proposition:g} and consider the holonomies along the boundaries of the two disks projecting to the circle surrounding the unique critical value $0$. Applying the same argument as in the proof of \cref{proposition:g}, it is not hard to see that the holonomy weighted count is $x_1+1$. However, note that the orientation on the arc $\gamma_{1,t'}$ from $b_{t'',t'}$ to $b_{t',t}$ reverses the one on the product torus, see \cref{fig:product}. As a consequence, the orientation on the boundary $\partial D_{+-}$ of the chain appeared in the proof of \cref{proposition:g} also needs to be reversed. This gives $g_1(x)=x_1-1$.
    \end{proof}

    Let $a\in\mathit{CW}^0_\pi(L_1,L_0)$ and $b\in\mathit{CW}^0_\pi(L_0,L_1)$ denote the generators corresponding to $\mu_0^{t'\rightarrow t}\in\mathit{CF}^0(\ell_{1,+}(t'),\ell_{0,+}(t))$ and $\kappa_0^{t''\rightarrow t'}\in\mathit{CF}^0(\ell_{0,-}(t''),\ell_{1,-}(t))$ after passing to the homotopy colimits as $t,t'\rightarrow\infty$, respectively. Also introduce the idempotents $e_0$ and $e_1$ for the objects $L_0$ and $L_1$ in the fiberwise wrapped Fukaya category $\mathcal{W}(W_k,\pi)$, respectively. It follows from \cref{lemma:px} and \cref{proposition:u} that in the endomorphism $A_\infty$-algebra $\mathcal{W}_{k,\pi}$ of $L_0$ and $L_1$ in $\mathcal{W}(W_k,\pi)$, now regarded as an $A_\infty$-algebra over the semisimple ring $\Bbbk=\mathbb{K}e_0\oplus\mathbb{K}e_1$, we have the relation
    \[
    a\cdot b=(x_1-1)e_0.
    \]
    
    We also perturb the Lagrangians $L_0$ and $L_1$ to $L_1(t)$, $L_0(t')$ and $L_0(t'')$ where $t<t'<t''$ are generic, see \cref{fig:p2}. In this case, the non-trivial triangle product
    \[
    \mathit{CF}^\ast(L_0(t'),L_1(t))\otimes\mathit{CF}^\ast(L_1(t''),L_0(t'))\longrightarrow\mathit{CF}^\ast(L_1(t''),L_1(t))
    \]
    can be computed in a similar way as \eqref{eq:p1}. The outcome is that after passing to the homotopy colimits as $t,t'\rightarrow\infty$, we have the relation
    \[
    b\cdot a=(x_1-1)e_1
    \]
    in the fiberwise wrapped Fukaya $A_\infty$-algebra $\mathcal{W}_{k,\pi}$.
    
    \begin{figure}[!htb]
    	\centering
    	\includegraphics{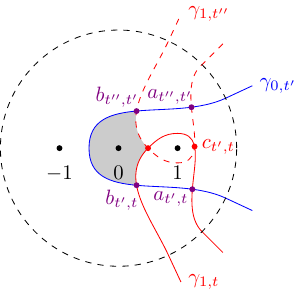}
    	\caption{The arcs $\gamma_{1,t}$, $\gamma_{0,t'}$ and $\gamma_{1,t''}$ together with a shaded triangle contributing to a Floer product.}\label{fig:p2}
    \end{figure}

    We now determine the Laurent polynomial $f(x)\in\mathbb{K}[x_1^{\pm1},x_2^{\pm1}]$ in \cref{proposition:f}, which follows from the following relation between different generators. Recall the description of $\mathit{CW}^\ast_\pi(L_1,L_1)$ in \cref{proposition:diff-b}.
    
    \begin{proposition}\label{proposition:relation}
    We have $\chi=a\cdot\beta\cdot b$ in $\mathcal{W}_{k,\pi}$.
    \end{proposition}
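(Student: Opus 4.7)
The strategy is to compute $a\cdot\beta\cdot b$ geometrically by setting up a four-Lagrangian configuration and analyzing the $J$-holomorphic curves contributing to the relevant $A_\infty$-operation, and then to identify the output with the generator $\chi\in\mathit{CW}^{-1}_\pi(L_0,L_0)$.

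First, I would fix perturbations $L_0(t_0),L_1(t_1),L_1(t_2),L_0(t_3)$ with $t_0<t_1<t_2<t_3$ chosen generically and sufficiently spaced so that all fiberwise identifications made in \cref{section:single,section:product} apply. In this configuration, $a$ is represented by $\mu_0^{t_1\to t_0}$ in the fiber over $b_{t_1,t_0}$, $\beta$ by $\zeta_0^{t_2\to t_1}$ in the fiber over $c_{t_2,t_1}$, and $b$ by $\kappa_0^{t_3\to t_2}$ in the fiber over $b_{t_3,t_2}$. Since $\mathit{CW}^\ast_\pi(L_1,L_1)$ is formal (as remarked after \cref{proposition:diff-b}) and the same argument applies to the $A_\infty$-operations on the mixed complexes (all generators being concentrated in degrees $-1$ and $0$, while higher operations require sufficient positive-degree inputs), it is enough to compute the iterated $\mu^2$ product. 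The output lies in $\mathit{CF}^{-1}(L_0(t_3),L_0(t_0))$, which by \cref{proposition:f} is supported on fiberwise generators above the secondary intersection $d_{t_3,t_0}$ of $\gamma_{0,t_0}$ and $\gamma_{0,t_3}$.

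Next I would apply the Lagrangian cylinder trick used in the proofs of \cref{proposition:Tx} and \cref{lemma:px}: replace each $L_i$ by the admissible Lagrangian cylinder $T_x$ fibered over the same projection arc. By the analogue of \cref{lemma:px} in this four-Lagrangian setting, the iterated product reduces to a count of $J$-holomorphic polygons in $W_k$ whose projections land inside the region of $\mathbb{C}$ bounded by the four arcs, valued in $H^\ast(T^2;\mathbb{K})$. A domain-shrinking argument parallel to those in \cite[Section 5]{aa}---collapsing $\gamma_{1,t_1}$ and $\gamma_{1,t_2}$ toward each other along an arc straddling the critical value $0$---puts the two $L_1$-boundary pieces into a cancelling configuration and reduces the count to a count of $J$-holomorphic sections over the bigon bounded by $\gamma_{0,t_0}$ and $\gamma_{0,t_3}$, i.e., precisely the configuration defining $\chi$. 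Both counts yield the generator of $H^0(T^2;\mathbb{K})$ with coefficient $1$, showing that $a_x\cdot\beta_x\cdot b_x$ equals the cylinder version of $\chi$ with coefficient $1$.

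Translating back from cylinders to the cotangent-fiber picture---as in the passage from \cref{proposition:Tx} to \cref{proposition:diff-b}---then gives the required identity $\chi=a\cdot\beta\cdot b$, with no extra Laurent-monomial factor. The main obstacle is the degeneration step: one must verify, using the open-mapping theorem and transversality as exploited repeatedly in \cref{section:single,section:product}, that collapsing the $L_1$-arcs does not introduce spurious $J$-holomorphic configurations (e.g.\ multiple covers or configurations wrapping around the critical value $0$ in unexpected ways) which would contribute higher-order monomial corrections. A secondary subtlety is sign-tracking in the quadrilateral count, especially because the orientation of $\gamma_{1,t}$-pieces from $b_{t_i,t_j}$ to $b_{t_k,t_l}$ reverses the fiberwise orientation, as observed in the proof of \cref{proposition:u}; these signs must be consistent with the Leibniz rule that will subsequently be applied to $d\chi$ in order to deduce $f(x)=(x_1-1)(x_1^k+x_2)$ in \cref{corollary:f=ug}.
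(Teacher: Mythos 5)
There is a genuine error in the geometric heart of your argument. The generator $\chi$ is not ``defined by'' a count of $J$-holomorphic sections over the bigon bounded by $\gamma_{0,t_0}$ and $\gamma_{0,t_3}$: it is simply the fiberwise intersection generator $\eta_0^{t_3\to t_0}$ sitting in the fiber over the secondary intersection point $d_{t_3,t_0}$. The section count over that bigon is what computes the \emph{differential} $d\chi=f(x)$ (that bigon contains the critical values $0$ and $1$; see \cref{fig:L0}), so degenerating your quadrilateral onto it would at best recover $f(x)$, not $\chi$. The region actually relevant to the product $a\cdot\beta\cdot b$ is the union of the two triangles with vertices $b_{t_1,t_0}$, $c_{t_2,t_1}$, $a_{t_2,t_0}$ and $a_{t_2,t_0}$, $b_{t_3,t_2}$, $d_{t_3,t_0}$ (cf.\ \cref{fig:relation}), and the decisive observation --- which your proposal misses --- is that these triangles contain \emph{no} critical values of $\pi$. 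This is precisely why no Laurent-monomial corrections can appear: each triangle can be shrunk to a point, reducing the product to the fiberwise computations $\mu_0^{t_1\to t_0}\cdot\zeta_0^{t_2\to t_1}=\nu_0^{t_2\to t_0}$ and $\nu_0^{t_2\to t_0}\cdot\kappa_0^{t_3\to t_2}=\eta_0^{t_3\to t_0}$, and passing to the homotopy colimit gives $a\cdot\beta\cdot b=\chi$ with coefficient $1$. Your worry about ``configurations wrapping around the critical value $0$'' dissolves once the correct region is identified.

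A secondary point: the Lagrangian cylinder trick is out of place here. In the paper it is deployed only to determine the unknown Laurent polynomials ($g_1$, $g_2$) attached to counts over regions that \emph{do} contain critical values; for the critical-value-free triangles above, the direct shrinking argument already pins down the count exactly, so passing through $T_x$ and back introduces unnecessary (and, as set up in your proposal, incorrectly targeted) degenerations. Your degree bookkeeping and the identification of $a$, $\beta$, $b$ with the fiberwise generators $\mu_0$, $\zeta_0$, $\kappa_0$ are correct, but the proof needs to be rebuilt around the two critical-value-free triangles rather than the bigon.
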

    \begin{proof}
    We choose arcs $\gamma_{0,t},\gamma_{1,t'},\gamma_{1,t''}\subset\mathbb{C}$ as in \cref{fig:relation}, which define the Lagrangian submanifolds $L_0(t),L_1(t')$ and $L_0(t'')$. Assume that $t'-t>t_{10}^+$ and $t''-t'>t_1'$, and consider the Floer product
    \[
    \mathit{CF}^\ast(L_1(t'),L_0(t))\otimes\mathit{CF}^\ast(L_1(t''),L_1(t'))\longrightarrow\mathit{CF}^\ast(L_1(t''),L_0(t)).
    \]
    Is is given by counting $J$-holomorphic sections of $\pi\colon W_k\rightarrow\mathbb{C}$ over the triangle with vertices $b_{t',t}$, $c_{t'',t'}$ and $a_{t'',t}$, see \cref{fig:relation_A}. Since this triangle does not contain any critical value of $\pi$, we can shrink it to a single point, which reduces the count to that inside a fiber, as in the proof of \cref{lemma:px}. In particular, in terms of fiberwise generators, we have (there is a more general formula for $\mu_q^{t'\rightarrow t}\cdot\zeta_{q'}^{t''\rightarrow t'}$, but only the $q=q'=0$ case is relevant for us)
    \[
    \mu_0^{t'\rightarrow t}\cdot\zeta_0^{t''\rightarrow t'}=\nu_0^{t''\rightarrow t}.
    \]
    Now pick the arc $\gamma_{0,t'''}\subset\mathbb{C}$ as shown in \cref{fig:relation} and consider the Floer product
    \[
    \mathit{CF}^\ast(L_1(t''),L_0(t))\otimes\mathit{CF}^\ast(L_0(t'''),L_1(t''))\longrightarrow\mathit{CF}^\ast(L_0(t'''),L_0(t))
    \]
    for $t''-t>t_{10}^-$ and $t'''-t''>t_{01}^-$. Again, this is determined by the count of $J$-holomorphic sections of $\pi\colon W_k\rightarrow\mathbb{C}$ over the triangle with vertices $a_{t'',t}$, $b_{t''',t''}$ and $d_{t''',t}$ as shown in \cref{fig:relation_B}, which contains no critical values of $\pi$. By the same shrinking argument as above, we obtain
    \[
    \nu_0^{t''\rightarrow t}\cdot\kappa_0^{t'''\rightarrow t''}=\eta_0^{t'''\rightarrow t}.
    \]
    Passing to the homotopy colimits as $t,t''\rightarrow\infty$ completes the proof.
    \end{proof}
    
    \begin{figure}[!htb]
    	\centering
        \begin{subfigure}{0.45\textwidth}
        \includegraphics{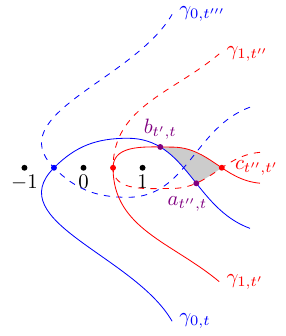}
        \caption{}\label{fig:relation_A}
        \end{subfigure}
        \begin{subfigure}{0.45\textwidth}
        \includegraphics{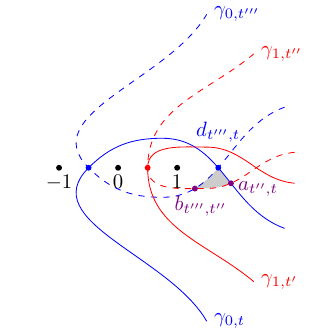}
        \caption{}\label{fig:relation_B}
        \end{subfigure}
    	
    \caption{The arcs $\gamma_{0,t}$, $\gamma_{1,t'}$, $\gamma_{1,t''}$ and $\gamma_{0,t'''}$ and two triangles contributing to the Floer product.}\label{fig:relation}
    \end{figure}
    
    \begin{corollary}\label{corollary:f=ug}
    We have $f(x)=g_1(x)g_2(x)=(x_1-1)(x_1^k+x_2)$.
    \end{corollary}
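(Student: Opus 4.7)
The plan is to deduce the formula for $f(x)$ from the relation $\chi = a\cdot\beta\cdot b$ established in \cref{proposition:relation} by applying the Leibniz rule to compute the differential of both sides. Since $f(x) = d\chi$ by the definition of $\chi$ in \cref{proposition:f}, and the differential on $\mathcal W_{k,\pi}$ is a derivation for the $\mu^2$-product, we get
\[
f(x)\,e_0 \;=\; d\chi \;=\; d(a\cdot\beta\cdot b) \;=\; (da)\cdot\beta\cdot b \;\pm\; a\cdot(d\beta)\cdot b \;\pm\; a\cdot\beta\cdot(db).
\]
First I would note that $a$ and $b$ are generators in degree $0$ coming from the fiberwise generators $\mu_0^{t'\to t}$ and $\kappa_0^{t''\to t'}$ at the corresponding transverse intersection points, so they are cycles, i.e.\@ $da = db = 0$. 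Therefore only the middle term survives, and using $d\beta = g_2(x)$ from \cref{proposition:diff-b} together with \cref{proposition:g}, we get $f(x)\,e_0 = \pm a \cdot g_2(x) \cdot b$.

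Second, I would argue that the degree $0$ Laurent polynomials $x_1, x_2$ can be moved past the morphisms $a$ and $b$: the fiberwise coordinates are defined by parallel transport along the arcs, and by construction both $L_0$ and $L_1$ are fibered over arcs that agree with the smooth fibers $\pi^{-1}(\pm \tfrac{1}{2}) \cong (\mathbb{C}^\ast)^2$ using the same coordinates $(x_1,x_2)$, so the $x_i$'s act compatibly on the Floer complexes for $L_0$ and $L_1$. Concretely, by the shrinking-triangle arguments already used in the proofs of \cref{lemma:px,proposition:relation}, one verifies that in $\mathcal W_{k,\pi}$ one has $x^{\bar q}\cdot a = a \cdot x^{\bar q}$ (and similarly for $b$) for the generators $x^{\bar q}\in\mathit{CW}^0_\pi(L_i,L_i)$. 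Hence
\[
\pm a\cdot g_2(x)\cdot b \;=\; \pm g_2(x)\cdot(a\cdot b).
\]

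Third, I would invoke the relation $a\cdot b = (x_1-1)\,e_0 = g_1(x)\,e_0$ established via \cref{lemma:px,proposition:u}, yielding $f(x) = \pm g_1(x)g_2(x) = \pm(x_1-1)(x_1^k+x_2)$. The sign is fixed by comparing with either $g_1$ or $g_2$ individually (each computed as a count with specified signs coming from the same orientation conventions on the fiberwise tori $\tau_x$), so the sign is $+$ and we obtain $f(x) = (x_1-1)(x_1^k+x_2)$ as claimed.

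The main obstacle here is not the derivation itself but making precise the commutation $x^{\bar q}\cdot a = a\cdot x^{\bar q}$; this requires checking that the identification of $x_1, x_2$ as generators of $\mathit{CW}^0_\pi(L_0,L_0)$ and of $\mathit{CW}^0_\pi(L_1,L_1)$ is compatible with the bimodule action on $\mathit{CW}^0_\pi(L_1,L_0)$ induced by the Floer product. This compatibility follows from the same degeneration-to-fiber (triangle-shrinking) argument as in the proof of \cref{lemma:px}, so no new ideas are required; once it is in place, the corollary drops out immediately from the Leibniz rule.
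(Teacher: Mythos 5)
Your proposal is correct and follows essentially the same route as the paper: apply the Leibniz rule to $\chi=a\cdot\beta\cdot b$ from \cref{proposition:relation}, use $da=db=0$ (which holds for degree reasons, since the generators of $\mathcal W_{k,\pi}$ sit in non-positive degrees), substitute $d\beta=g_2(x)e_1$, move the central Laurent polynomial past $a$, and invoke $a\cdot b=(x_1-1)e_0$. The sign you leave ambiguous is automatically $+$ because $|a|=0$ makes the Koszul sign in the Leibniz rule trivial, so no separate comparison is needed.
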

    \begin{proof}
    Since $|a|=|b|=0$, and the generators of $\mathcal{W}_{k,\pi}$ are supported in non-positive degrees, we have $da=db=0$. Thus
    \[
    d\chi=a\cdot d\beta\cdot b=a\cdot g_2(x)e_1\cdot b=g_2(x)a\cdot be_0=g_2(x)g_1(x)e_0.
    \]
    \end{proof}
    
    \subsection{Wrapping and localization}\label{subsection:wrapping}
    
    In this subsection, we finish our computation of the wrapped Fukaya category $\mathcal{W}(W_k;\mathbb{K})$ by localizing the fiberwise wrapped Fukaya category $\mathcal{W}(W_k,\pi)$. We start with the following result concerning the $A_\infty$-structure on $\mathcal{W}_{k,\pi}$.
    
    \begin{lemma}\label{lemma:dg}
    The $A_\infty$-operations $\mu^{\geq3}$ vanish in the fiberwise wrapped Fukaya $A_\infty$-algebra $\mathcal{W}_{k,\pi}$.
    \end{lemma}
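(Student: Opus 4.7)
The plan is to split the argument by arity. For $\mu^k$ with $k \geq 4$, the vanishing follows purely from a degree count: by \cref{proposition:diff-b,proposition:f} together with the analyses of the morphism spaces $\hom(L_0,L_1)$ and $\hom(L_1,L_0)$ in \cref{section:product}, every generator of $\mathcal{W}_{k,\pi}$ lies in degree $-1$ or $0$. Since $\mu^k$ has degree $2-k$ and the inputs have total degree at most zero, the output lies in degree at most $2-k \leq -2$, and hence must vanish.

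For $\mu^3$ the same degree count forces any non-trivial contribution to come from three inputs of degree zero and an output of degree $-1$. The degree-zero part of $\mathcal{W}_{k,\pi}$ decomposes as the direct sum of $\mathbb{K}[x_1^{\pm 1},x_2^{\pm 1}] e_i$ for $i=0,1$ together with the rank-one modules generated by $a$ and $b$. By the homotopy colimit presentation of \cref{lemma:hocolim}, it suffices to prove the vanishing at finite stages, where $\mu^3$ is computed by a signed count of $J$-holomorphic disks with four boundary marked points on a cyclic sequence of four perturbed Lagrangians taken from $\{L_0(t),L_1(t)\}$.

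When the three inputs all lie in a single $e_i \mathcal{W}_{k,\pi} e_i$, all four corners of the disk project to the single point $\pm\tfrac{1}{2} \in \mathbb{C}$, so by the open mapping theorem applied to $\pi$ the contributing disks are contained in the fiber $\pi^{-1}(\pm\tfrac{1}{2}) \cong (\mathbb{C}^\ast)^2$; there the fiberwise Fukaya $A_\infty$-structure on the product Lagrangians $(\mathbb{R}_+)^2$ is formal with vanishing higher products by \cite[Propositions~5.6 and 5.7]{aa}, which is the argument already invoked at the end of the proof of \cref{proposition:diff-b}. When the inputs include $a$ or $b$, we classify the composable chains $L_{j_0}\to L_{j_1}\to L_{j_2}\to L_{j_3}$ by the positions of the transitions $0\leftrightarrow 1$, and in each case further deform the base arcs $\gamma_{0,\cdot}$ and $\gamma_{1,\cdot}$ so that the input corners cluster in a disk in $\mathbb{C}$ containing no critical value of $\pi$. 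A shrinking argument as in \cite[Propositions~5.14 and 5.15]{aa}, already applied to prove \cref{lemma:px,proposition:relation}, then factors each contributing disk as a fiberwise quadrilateral times a trivial base component, reducing the count to a fiberwise triple product that vanishes by fiber formality. The main obstacle will be to carry out this shrinking uniformly across the several chain types, in particular those whose projected region a priori surrounds a critical value of $\pi$; the monomial admissibility of our Lagrangians together with the invariance properties of $d^\mathbb{C} h$ established in \cref{lemma:setup} ensure that the required base isotopies can be realized while keeping all objects admissible and the maximum principles of \cite[Propositions~3.10 and 3.11]{aa} in force.
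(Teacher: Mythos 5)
Your treatment of $\mu^{\geq4}$ and the reduction of $\mu^3$ to three degree-$0$ inputs with a degree-$(-1)$ output agree with the paper. The gap lies in how you then dispose of $\mu^3$. First, in the case where all three inputs lie in $e_i\mathcal{W}_{k,\pi}e_i$, it is not true that all four corners project to $\pm\tfrac{1}{2}$: the inputs do, but the only output that could contribute to a degree-$(-1)$ operation is a degree-$(-1)$ generator, i.e.\ a $\zeta$- (resp.\ $\eta$-) type generator living over $c_{t',t}$ (resp.\ $d_{t',t}$); such a curve is not contained in a fiber, and formality of the fiberwise category says nothing about it. Second, and more seriously, your strategy for the mixed cases---isotope the base arcs so that the projected region contains no critical value, then shrink to a fiberwise count---cannot succeed in exactly the case you flag as ``the main obstacle.'' The admissible isotopies at your disposal fix $\mathit{Crit}(\pi)$ and the ray $(-\infty,-1)$ pointwise/setwise, so if the quadrilateral bounded by the four arcs encloses a critical value, no such isotopy removes it from the region. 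And sections over regions enclosing critical values genuinely contribute to the structure maps: that is precisely the source of the nontrivial counts $g_1(x)=x_1-1$ and $g_2(x)=x_1^k+x_2$ in \cref{proposition:g,proposition:u}. So the hard case is left open, and the proposed tool is the wrong one for it.

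The paper closes this case by a different mechanism, with no shrinking and no restriction on critical values in the interior of the region (this is the point of the question mark in \cref{fig:crossings}). By the open mapping theorem, a contributor to $\mu^3$ with not all asymptotes in one fiber is a section of $\pi$ over a quadrilateral region bounded by the four projected arcs. The grading convention ties the degree of a corner generator to the difference of phase angles of the two arcs meeting there: a difference in $(0,\pi)$ gives degree $0$ and a difference in $(\pi,2\pi)$ gives degree $-1$. Demanding that the three input corners have degree $0$ constrains the orientations of the four arcs to the two configurations in \cref{fig:crossings}, and in either configuration the phase-angle difference at the fourth corner is then also forced into $(0,\pi)$, so the output generator would have degree $0$. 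Since $\mu^3$ has degree $-1$, the output must have degree $-1$, and hence no such section exists. This orientation argument at the output corner is the idea missing from your proposal.
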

    \begin{proof}
    Recall that the gradings on the admissible Lagrangians $L_0,L_1\subset W_k$ are induced from the choices of gradings on the fiberwise Lagrangians $\ell_0\subset\pi^{-1}(-\frac{1}{2})$ and $\ell_1\subset\pi^{-1}(\frac{1}{2})$ and orientations on the arcs $\gamma_0,\gamma_1\subset\mathbb{C}$. The same is true for the isotoped Lagrangians $L_0(t),L_1(t')$, with the fiberwise Lagrangians replaced with $\ell_0(t),\ell_1(t')$ and the base arcs replaced with $\gamma_{0,t},\gamma_{1,t'}$. Now if we choose the standard grading on the fiberwise Lagrangian $\ell_0(t)$, the phase angles of $\gamma_{0,t}$ and $\gamma_{1,t'}$ will determine the gradings of the generators of the Floer cochain complex $\mathit{CF}^\ast(L_1(t'),L_0(t))$. If the projections in $\mathbb{C}$ of $L_0(t)$ and $L_1(t')$ have phase angles that differ by an amount in $(\pi,2\pi)$ (resp.\@ $(0,\pi)$) at their intersection point $c_{t',t}$, for $t'-t$ sufficiently large, then by our choice of the fiberwise gradings, the generators of $\mathit{CF}^\ast(L_1(t'),L_0(t))$ must lie in degree $-1$ (resp.\@ $0$).
    
    Since the isotopy $\rho^t$ in the base is stopped at the ray $(-\infty,-1) \subset \mathbb{C}$ and the fiberwise generators of $\mathcal{W}_{k,\pi}$ all have degree $0$, it follows that the generators of the fiberwise wrapped Fukaya $A_\infty$-algebra $\mathcal{W}_{k,\pi}$ are supported in degrees $0$ and $-1$. This forces all the $A_\infty$-operations $\mu^k$ with $k\geq4$ to vanish for degree reasons, and $\mu^3\neq0$ only when it is applied to three generators of degree $0$. Now suppose that $u\colon\Sigma\rightarrow W_k$ is a solution of the Floer equation \eqref{equation:floer} contributing to $\mu^3$, with boundary on some Hamiltonian perturbations of the Lagrangian submanifolds $L_0$ or $L_1$. Since it cannot be the case that all the asymptotes of $u$ lie inside a fiber, by the open mapping theorem, $u$ must be a $J$-holomorphic section of $\pi\colon W_k\rightarrow\mathbb{C}$. Suppose that $\mu^3$ is applied to three degree $0$ generators projecting to $c_1,c_2,c_3\in\mathbb{C}$ under $\pi$, see \cref{fig:crossings}, which depicts the projection of $u\colon\Sigma\rightarrow W_k$. By our above discussion regarding gradings and arc orientations, the only two options for the orientations of the arcs are depicted in \cref{fig:crossings1,fig:crossings2}. In either case, the generator corresponding to $c_4$ is forced to have degree $0$, which means that such $J$-holomorphic section can not exist, since $\mu^3$ has degree $-1$.
    \end{proof}
    
    \begin{figure}[!htb]
    	\centering
        \begin{subfigure}{0.45\textwidth}
        \centering
        \includegraphics{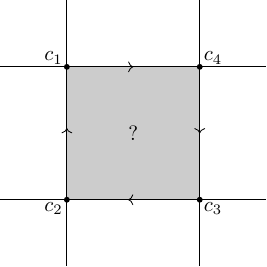}
        \caption{}\label{fig:crossings1}
        \end{subfigure}
        \begin{subfigure}{0.45\textwidth}
        \centering
        \includegraphics{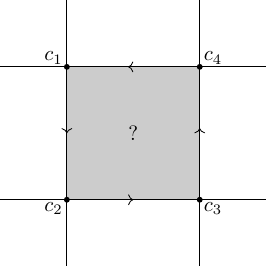}
        \caption{}\label{fig:crossings2}
        \end{subfigure}
    	\caption{Image under $\pi$ of a putative solution to the Floer equation contributing to $\mu^3$; the question mark indicates that one or more critical values can appear in the interior of the square.}\label{fig:crossings}
    \end{figure}
    
    \begin{corollary}
    The $A_\infty$-algebra $\mathcal{W}_{k,\pi}$ is quasi-isomorphic to the dg algebra over $\Bbbk=\mathbb{K}e_0\oplus\mathbb{K}e_1$ with generators $x_1^{\pm1},x_2^{\pm1},a,b,\beta$, which have gradings
    \[
    |x_1|=|x_2|=|a|=|b|=0,\quad |\beta|=-1,
    \]
    satisfy the relations
    \[
    a b=(x_1-1)e_0,\quad b a=(x_1-1)e_1,\quad \beta a=b\beta=0,\quad \beta^2=0,
    \]
    and have differentials
    \[
    da=db=0,\quad d\beta=(x_1^k+x_2)e_1.
    \]
    \end{corollary}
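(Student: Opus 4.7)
The plan is to assemble $\mathcal{W}_{k,\pi}$ from its Peirce decomposition $\mathcal{W}_{k,\pi}=\bigoplus_{i,j\in\{0,1\}}\mathit{CW}^\ast_\pi(L_i,L_j)$ relative to the idempotents $e_0,e_1$. First I would invoke \cref{lemma:dg} to reduce the problem to reading off a dg algebra structure, since $\mu^{\geq 3}=0$. The two diagonal pieces are already in hand: \cref{proposition:diff-b} together with \cref{proposition:g} identifies $e_1\mathcal{W}_{k,\pi}e_1$ with $\mathbb{K}[x_1^{\pm1},x_2^{\pm1},\beta]/(\beta^2)$ equipped with $d\beta=(x_1^k+x_2)e_1$, while \cref{proposition:f} combined with \cref{corollary:f=ug} identifies $e_0\mathcal{W}_{k,\pi}e_0$ with $\mathbb{K}[x_1^{\pm1},x_2^{\pm1},\chi]/(\chi^2)$ with $d\chi=(x_1-1)(x_1^k+x_2)e_0$.

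For the off-diagonal pieces, I would read off the degree-$0$ generators $a\in e_0\mathcal{W}_{k,\pi}e_1$ and $b\in e_1\mathcal{W}_{k,\pi}e_0$ from the triangle-product analysis of \cref{lemma:px} and \cref{proposition:u}, which simultaneously supplies the relations $ab=(x_1-1)e_0$ and $ba=(x_1-1)e_1$; the commutation of $a,b$ with $x_1^{\pm1},x_2^{\pm1}$ (across the two copies of the Laurent ring in $e_0\mathcal{W}_{k,\pi}e_0$ and $e_1\mathcal{W}_{k,\pi}e_1$) is encoded in the intermediate fiberwise computations such as \eqref{eq:id1} and \eqref{eq:id2}. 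The relations $\beta a=0$ and $b\beta=0$ are then automatic for idempotent reasons, since $\beta\in e_1\mathcal{W}_{k,\pi}e_1$ cannot pre-compose with $a\in e_0\mathcal{W}_{k,\pi}e_1$ nor post-compose with $b\in e_1\mathcal{W}_{k,\pi}e_0$, and $da=db=0$ is forced by the gradings (which are concentrated in $\{-1,0\}$).

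Finally, to match the clean presentation of the corollary I would eliminate $\chi$ via the identity $\chi=a\beta b$ from \cref{proposition:relation}. Consistency with \cref{corollary:f=ug} is then a Leibniz-rule check, since $d(a\beta b)=a\cdot(x_1^k+x_2)e_1\cdot b=(x_1^k+x_2)\cdot ab=(x_1-1)(x_1^k+x_2)e_0$. The remaining task, and the main piece of bookkeeping, is to verify that no further generators have been missed: the degree $-1$ fiberwise classes $\nu_q^{t'\to t}\in\mathit{CW}^\ast_\pi(L_1,L_0)$ and their mirror counterparts in $\mathit{CW}^\ast_\pi(L_0,L_1)$ must be identified with $x^{\bar q}\cdot a\beta$ and $\beta b\cdot x^{\bar q}$, respectively, after passing to the homotopy colimit. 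This follows from essentially the same domain-shrinking argument that appears in the proof of \cref{proposition:relation}, applied to a triangle built from the three arcs $\gamma_{0,t},\gamma_{1,t'},\gamma_{1,t''}$ (respectively its mirror). Since the technique has already been used there, I expect this step to be routine rather than genuinely obstructive; the main subtlety is merely to track idempotents and orientations consistently throughout the assembly.
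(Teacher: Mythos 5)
Your proposal is correct and follows essentially the same route as the paper's (much terser) proof: assemble the presentation from the computations of \cref{section:single,section:product}, note that $\beta a=b\beta=0$ because the morphisms do not compose, and absorb the remaining degree $-1$ classes ($\chi=a\beta b$ via \cref{proposition:relation}, and the off-diagonal generators $\nu_q$ and their mirrors as Laurent-monomial multiples of $a\beta$ and $\beta b$ via the same domain-shrinking argument). Your Leibniz-rule consistency check reproduces \cref{corollary:f=ug}, so there is nothing missing.
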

    \begin{proof}
    This follows essentially from our computations in \cref{section:single,section:product}. The generators of $\mathit{CW}^{-1}_\pi(L_0,L_1)$ and $\mathit{CW}^{-1}_\pi(L_1,L_0)$, which correspond to intersection points in the fibers above the points $a_{t',t}$ and $a_{t'',t'}$ in \cref{fig:product,fig:p2}, do not appear in the above list because they can be identified with $a\beta$, $\beta b$, or their multiplications by $x_1-1$. We have the relations $\beta a=b\beta=0$ because the Floer complexes do not compose in that order.
    \end{proof}
    
    In order to compute the fully wrapped Fukaya category $\mathcal{W}(W_k;\mathbb{K})$, we need to remove the stop corresponding to the ray $(-\infty,-1) \subset \mathbb{C}$ (i.e., a fiber $\pi^{-1}(c)$ for some $c\in(-\infty,-1)$). According to Abouzaid--Seidel \cite{asl}, there is an acceleration functor
    \begin{equation}\label{eq:acc}
    \mathcal{W}(W_k,\pi)\longrightarrow\mathcal{W}(W_k;\mathbb{K})
    \end{equation}
    associated to the stop removal, which realizes the wrapped Fukaya category $\mathcal{W}(W_k;\mathbb{K})$ as the localization along the natural transformation from the identity functor to the $A_\infty$-functor
    \[
    \mu\colon\mathcal{W}(W_k,\pi)\longrightarrow\mathcal{W}(W_k,\pi)
    \]
    induced by the monodromy of $\pi$ along a large circle, see also \cite[Theorem 1.2]{zso} and \cite[Theorem 1.20]{gps}. The monodromy functor $\mu$ preserves admissibility of Lagrangians, and for an admissible Lagrangian submanifold $L$, its image $\mu(L)$ is called the \textit{once wrapping} of $L$. For example, when $L=L_0$, its once wrapping $\mu(L_0)$ is defined by parallel transporting $\ell_0\subset\pi^{-1}(-\frac{1}{2})$ along the arc $\mu(\gamma_0)$ depicted in \cref{fig:wrapping_unper}.
    \begin{figure}[!htb]
        \centering
        \includegraphics{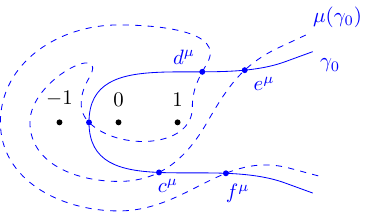}
        \caption{Projections of $L_0$ and its once wrapping $\mu(L_0)$.}\label{fig:wrapping_unper}
    \end{figure}
    The natural transformation $\mathit{id}\Rightarrow\mu$ is determined by a continuation element
    \[
    w_L\in\mathit{CW}_\pi^{-2}(\mu(L),L)
    \]
    for any admissible Lagrangian $L$.
    \begin{figure}[!htb]
    	\centering
        \begin{subfigure}{0.45\textwidth}
        \includegraphics{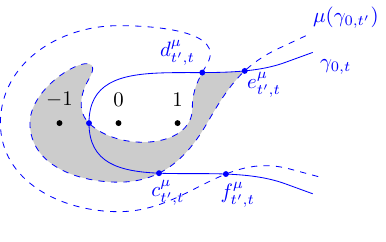}
        \caption{}\label{fig:wrapping_A}
        \end{subfigure}
        \begin{subfigure}{0.45\textwidth}
        \includegraphics{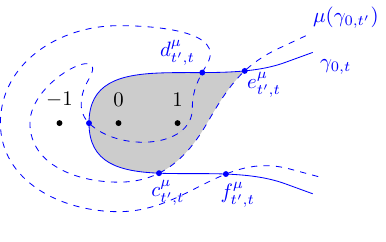}
        \caption{}\label{fig:wrapping_B}
        \end{subfigure}
    	\caption{Projections of $L_0(t)$ and its once wrapping $\mu(L_0(t'))$ with two bigons contributing to the Floer differential.}\label{fig:wrapping}
    \end{figure}
    
    Since the admissible Lagrangian submanifold $L_0$ has two ends, the continuation element $w_{L_0}$ is also the sum of two parts, lying in the fibers $\pi^{-1}(e^\mu)$ and $\pi^{-1}(f^\mu)$, respectively. More precisely, for $t'-t$ large enough and generic, the Floer complex $\mathit{CF}^\ast(\mu(L(t')),L(t))$ is isomorphic to the direct sum
    \begin{equation}\label{eq:w1cpx}
    \begin{split}
    \mathit{CF}^\ast(\ell_0(t'),\ell_0(t))&\oplus\mathit{CF}^\ast(\ell_{0,+}^\mu(t'),\ell_{0,-}(t))[1]\oplus\mathit{CF}^\ast(\ell_{0,-}^\mu(t'),\ell_{0,+}(t))[1] \\
    &\oplus\mathit{CF}^\ast(\ell^\mu_{0,++}(t'),\ell_{0,++}(t))[2]\oplus\mathit{CF}^\ast(\ell^\mu_{0,--}(t'),\ell_{0,--}(t))[2],
    \end{split}
    \end{equation}
    of the fiberwise Floer complexes over the points $-\frac{1}{2}$, $c_{t',t}^\mu$, $d_{t',t}^\mu$, $e_{t',t}^\mu$ and $f_{t',t}^\mu$, respectively (the perturbed equivalent of the points $c^\mu,d^\mu,e^\mu$ and $f^\mu$ in \cref{fig:wrapping_unper}). Now define the following Lagrangian submanifolds:
    \begin{itemize}
        \item Define $\ell_{0,+}^\mu(t')$ to be the parallel transport of $\ell_0(t')$ from $-\frac{1}{2}$ to $c_{t',t}^\mu$ along $\mu(\gamma_{0,t'})$.
        \item Define $\ell_{0,-}^\mu(t')$ to be the parallel transport of $\ell_0(t')$ from $-\frac{1}{2}$ to $d_{t',t}^\mu$ along $\mu(\gamma_{0,t'})$.
        \item Define $\ell^\mu_{0,++}(t')$ to be the parallel transport of $\ell_0(t')$ from $-\frac{1}{2}$ to $e_{t',t}^\mu$ along $\mu(\gamma_{0,t'})$.
        \item Define $\ell_{0,++}(t)$ to be the parallel transport of $\ell_0(t)$ from $-\frac{1}{2}$ to $e_{t',t}^\mu$ along $\gamma_{0,t}$.
        \item Define $\ell^\mu_{0,--}(t')$ to be the parallel transport of $\ell_0(t')$ from $-\frac{1}{2}$ to $f_{t',t}^\mu$ along $\mu(\gamma_{0,t'})$.
        \item Define $\ell_{0,--}(t)$ to be the parallel transport of $\ell_0(t)$ from $-\frac{1}{2}$ to $f_{t',t}^\mu$ along $\gamma_{0,t}$.
    \end{itemize}
    The differential $\partial\colon \mathit{CF}^\ast(\mu(L(t')),L(t))\rightarrow\mathit{CF}^{\ast+1}(\mu(L(t')),L(t))$ is defined by counting $J$-holomorphic sections of $\pi\colon W_k\rightarrow\mathbb{C}$ over the bigons bounded by the paths $\gamma_{0,t}$ and $\mu(\gamma_{0,t'})$. For example, for the generators over $e^\mu_{t',t}$, there are two bigons in the base with outputs at $c^\mu_{t',t}$ and $d^\mu_{t',t}$, respectively, see \cref{fig:wrapping_A,fig:wrapping_B}. These counts contribute to the two components of the map
    \[
    \mathit{CF}^\ast(\ell^\mu_{0,++}(t'),\ell_{0,++}(t))\longrightarrow\mathit{CF}^\ast(\ell_{0,+}^\mu(t'),\ell_{0,-}(t))\oplus\mathit{CF}^\ast(\ell_{0,-}^\mu(t'),\ell_{0,+}(t)),
    \]
    which is part of the differential of \eqref{eq:w1cpx}. There is a continuation map
    \[
    K_{L_0,L_0}^{t',t,\mu}\colon\mathit{CF}^\ast(L_0(t'),L_0(t))\longrightarrow\mathit{CF}^\ast(\mu(L_0(t')),L_0(t)),
    \]
    under which the summand $\mathit{CF}^\ast(\ell_0(t'),\ell_0(t))$ in the mapping cone description of $\mathit{CF}^\ast(L_0(t'),L_0(t))$, that is similar to \eqref{eq:cone}, is mapped to the direct sum
    \[
    \mathit{CF}^\ast(\ell^\mu_{0,++}(t'),\ell_{0,++}(t))[2]\oplus\mathit{CF}^\ast(\ell^\mu_{0,--}(t'),\ell_{0,--}(t))[2],
    \]
    in \eqref{eq:w1cpx}. Each component of this map is defined by counting $J$-holomorphic sections of $\pi\colon W_k\rightarrow\mathbb{C}$ over bigons with moving Lagrangian boundary conditions. Near the input, the Lagrangian boundary conditions are given by $L_0(t')$ and $L_0(t)$, while near the output, they are replaced with $\mu(L_0(t'))$ and $L_0(t)$. Since the total monodromy of $\pi$ is trivial, it follows that $w_{L_0}^{t'\rightarrow t}$ (which gives $w_{L_0}$ in the homotopy colimit as $t\rightarrow\infty$) coincides with the image of $\vartheta_0^{t'\rightarrow t}\in\mathit{CF}^0(\ell_0(t'),\ell_0(t))$, i.e.,
    \[
    K_{L_0,L_0}^{t',t,\mu}(\vartheta_0^{t'\rightarrow t})=w_{L_0}^{t'\rightarrow t}\in\mathit{CF}^{-2}(\mu(L_0(t')),L_0(t)).
    \]

    Similarly, one can define the Lagrangian $\mu(L_1)$ and the continuation element $w_{L_1}$ corresponding to $L_1$. The localization \eqref{eq:acc} forces all the morphisms $w_L \colon L\rightarrow\mu(L)[2]$ to be isomorphisms.
    
    \begin{theorem}\label{theorem:dga}
    The wrapped Fukaya $A_\infty$-algebra $\mathcal{W}_k$ is quasi-isomorphic to the $\mathbb{K}[x_1,x_2]$-linear path algebra of the quiver
    \begin{equation}\label{eq:quiver}
    \begin{tikzcd}
    \bullet \arrow[loop left,"\alpha"] \arrow[r,bend left,"a"] & \bullet \arrow[loop right,"\beta"] \arrow[l,bend left,"b"]
    \end{tikzcd}
    \end{equation} 
    with gradings
    \[
    |x_1|=|x_2|=|a|=|b|=0,\quad |\alpha|=|\beta|=-1,
    \]
    relations
    \[
    a b=(x_1-1)e_0,\quad b a=(x_1-1)e_1,\quad \alpha^2=\beta^2=0,
    \]
    and differentials
    \begin{equation}\label{eq:diff-ab}
    da=db=0,\quad d\alpha=(x_2+1)e_0,\quad d\beta=(x_1^k+x_2)e_1.
    \end{equation}
    \end{theorem}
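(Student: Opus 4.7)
The plan is to derive $\mathcal{W}_k$ from the fiberwise wrapped dg algebra $\mathcal{W}_{k,\pi}$ computed in \cref{section:single,section:product} by applying the stop-removal formalism of Abouzaid--Seidel, Sylvan, and Ganatra--Pardon--Shende recalled just above the theorem statement. Concretely, the acceleration functor \eqref{eq:acc} realizes $\mathcal{W}_k$ as the $A_\infty$-localization of $\mathcal{W}_{k,\pi}$ at the continuation elements $w_{L_0}\in\mathit{CW}^{-2}_\pi(\mu(L_0),L_0)$ and $w_{L_1}\in\mathit{CW}^{-2}_\pi(\mu(L_1),L_1)$, so the task is to identify these elements explicitly and then formally invert them.

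First I would compute $w_{L_0}$ and $w_{L_1}$ using the decomposition \eqref{eq:w1cpx}: the element $w_{L_0}^{t'\to t}$ is the image of $\vartheta_0^{t'\to t}$ under $K^{t',t,\mu}_{L_0,L_0}$, and is determined by counting $J$-holomorphic sections of $\pi$ over the bigons that $\mu(\gamma_{0,t'})$ forms with $\gamma_{0,t}$ around the critical values $-1$ and $0$. Applying the cylindrical Lagrangian trick of \cref{proposition:Tx,proposition:g,proposition:u} to each newly accessible bigon, the bigon around $-1$ (whose vanishing cycle is one of the basis vectors of $H_1(T^2;\mathbb{Z})$ singled out in \cref{section:morse-bott-lefschetz}) contributes a new degree $-1$ generator $\alpha$ at vertex $0$ with differential $(x_2+1)e_0$, the polynomial being read off from the holonomy-weighted section count exactly as $g_2$ was read off in \cref{proposition:g}. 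The bigon around $0$ instead contributes a term absorbed by the previously established relation $ab=(x_1-1)e_0$. A symmetric analysis for $w_{L_1}$ recovers the existing generator $\beta$ from the bigon around $1$ with differential $(x_1^k+x_2)e_1$, while the bigon around $0$ is again absorbed by $ba=(x_1-1)e_1$.

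The final step is the formal localization: adjoining inverses of $w_{L_0}$ and $w_{L_1}$ and performing standard $A_\infty$-simplifications, one checks that the only independent new generator compared with $\mathcal{W}_{k,\pi}$ is $\alpha$, while all other potentially new elements (in particular the old $\chi$) are expressible in terms of $x_1,x_2,a,b,\alpha,\beta$ via \cref{proposition:relation,corollary:f=ug}. The relations $\alpha^2=\beta^2=0$ and the vanishing of $\mu^{\geq 3}$ follow by the same degree argument as in \cref{lemma:dg}, and centrality of $x_1$ and $x_2$ (hence the $\mathbb{K}[x_1,x_2]$-linearity of the resulting path algebra) is verified by the same triangle-shrinking argument used in \cref{lemma:px}. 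The main obstacle is the careful enumeration of $J$-holomorphic sections contributing to $w_{L_0}$ near the newly accessible critical value $-1$, together with checking that the bigons around $0$ produce no further independent generators beyond those already present.
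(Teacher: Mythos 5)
Your skeleton matches the paper's: both pass through the acceleration functor \eqref{eq:acc}, both produce the new generator $\alpha$ from the fiberwise complex over the intersection point of $\gamma_{0,t}$ with $\mu(\gamma_{0,t'})$ whose bigon surrounds the critical value $-1$, and both read off $d\alpha=(x_2+1)e_0$ by the same holonomy-weighted section count as in \cref{proposition:g}. However, there is a genuine gap at the localization step. You write that one "adjoins inverses of $w_{L_0}$ and $w_{L_1}$ and performs standard $A_\infty$-simplifications," concluding that the only independent new generator is $\alpha$ --- but this conclusion is precisely what has to be proved, and it requires an input you never supply: the explicit identification of the continuation elements as words in the generators, namely $w_{L_0}=a\beta b\alpha+\alpha a\beta b$ and $w_{L_1}=b\alpha a\beta+\beta b\alpha a$. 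This identification (carried out in the paper by the triangle counts over the region bounded by $\gamma_{0,t}$, $\mu(\gamma_{0,t'''})$ and $\mu(\gamma_{0,t''''})$, together with the total-monodromy-is-trivial argument pinning $w_{L_0}^{t'\to t}$ as the continuation image of $\vartheta_0^{t'\to t}$) occupies the bulk of the proof. It is what makes the localization computable: the morphism spaces in $\mathcal{W}(W_k;\mathbb{K})$ are homotopy colimits $\hocolim_n\mathit{CW}^\ast_\pi(\mu^n(L_i),L)$ as in \eqref{eq:CW}, with connecting maps given by multiplication by $w_{L_i}$, and only because $w_{L_i}$ is a product of $a,b,\alpha,\beta$ already present does the colimit yield exactly the stated path algebra rather than an algebra with genuinely new morphisms. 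Without this, "formal localization" gives you no control over $H^{\leq -2}$.

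Two smaller inaccuracies: the degree $-1$ generator coming from the other new intersection point of $\gamma_{0,t}$ with $\mu(\gamma_{0,t'})$ (whose bigon contains the critical values $0$ and $1$) is not "absorbed by the relation $ab=(x_1-1)e_0$"; it is identified with $a\beta b=\chi$ via \cref{proposition:relation}, with differential $(x_1-1)(x_1^k+x_2)$ as in \cref{corollary:f=ug} --- you do cite these results later, so this is mostly a matter of phrasing. Also, the fiberwise algebra is linear over $\mathbb{K}[x_1^{\pm1},x_2^{\pm1}]$ while the theorem's presentation is over $\mathbb{K}[x_1,x_2]$; the paper justifies discarding the negative powers by observing from \eqref{eq:diff-ab} that they do not contribute to cohomology, a point your proposal omits. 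Finally, note that the degree argument of \cref{lemma:dg} gives $\mu^{\geq 3}=0$ but does not by itself give $\alpha^2=0$, since degree $-2$ is occupied in $\mathcal{W}_k$; that relation requires the fiberwise vanishing analogous to \eqref{eq:p3}.
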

    \begin{proof}
    We first explain the origin of the generator $\alpha$ and its differential. Consider the fiberwise Floer complex $\mathit{CF}^\ast(\ell_{0,+}^\mu(t'),\ell_{0,-}(t))$ in \eqref{eq:w1cpx}. For $t'-t$ large enough and generic, there is a square $Q_0^\mu(t'-t)\subset\mathbb{R}^2$ such that $\mathit{CF}^\ast(\ell_{0,+}^\mu(t'),\ell_{0,-}(t))$ is supported in degree $0$ and
    \[
    \mathit{CF}^0(\ell_{0,+}^\mu(t'),\ell_{0,-}(t))\cong\bigoplus_{q\in Q_0^\mu(t'-t)\cap(2\pi\mathbb{Z})^2}\mathbb{K}\cdot\varsigma_q^{t'\rightarrow t},
    \]
    where each $\varsigma_q$ corresponds to an intersection point in the fiber $\pi^{-1}(c_{t',t}^\mu)$. The chain map
    \[
    \mathfrak{s}_{\ell_0,t',t}^\mu\colon \mathit{CF}^0(\ell_{0,+}^\mu(t'),\ell_{0,-}(t))\longrightarrow\mathit{CF}^0(\ell_0(t'),\ell_0(t))[1],
    \]
    which is part of the Floer differential of the mapping cone description \eqref{eq:w1cpx} of $\mathit{CF}^\ast(\mu(L_0(t')),L_0(t))$, is given by counting $J$-holomorphic sections of $\pi\colon W_k\rightarrow\mathbb{C}$ over the region bounded by the arcs $\gamma_{0,t}$ and $\mu(\gamma_{0,t'})$, see \cref{fig:wrapping4}.
    \begin{figure}[!htb]
        \centering
        \includegraphics{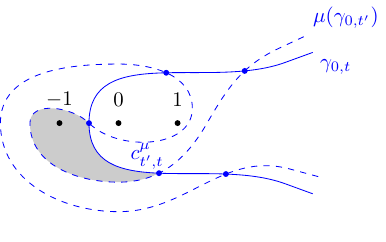}
        \caption{Projections of $L_0(t)$ and its once wrapping $\mu(L_0(t'))$ with a shaded bigon contributing to a Floer differential.}
        \label{fig:wrapping4}
    \end{figure}
    Since the region contains the critical value $-1$, the same argument as in the proof of \cref{proposition:diff-b,proposition:g} shows that after passing to the homotopy colimit as $t\rightarrow\infty$, the map $\mathfrak{s}_{\ell_0,t',t}^\mu$ is the multiplication by the polynomial $x_2+1$. Thus, denoting by $\alpha$ the generator $\varsigma_0^{t'\rightarrow t}$ in the homotopy colimit as $t\rightarrow\infty$, we get $d\alpha=(x_2+1)e_0$.
    
    By abuse of notation, we still use $a$, $b$ and $\beta$ to denote the images of the same generators under the continuation maps
    \begin{equation}\label{eq:K}
    K_{L_i,L_j}^{\mu,\mu}\colon\mathit{CW}^\ast_\pi(L_i,L_j)\longrightarrow\mathit{CW}_\pi^\ast(\mu(L_i),\mu(L_j))
    \end{equation}
    for $i,j\in \{0,1\}$. The next step of the proof is to identify the continuation elements $w_{L_0}$ and $w_{L_1}$ with
    \[
    a\beta b\alpha+\alpha a\beta b\in\mathit{CW}_\pi^{-2}(\mu(L_0),L_0)
    \]
    and
    \[
    b\alpha a\beta+\beta b\alpha a\in\mathit{CW}_\pi^{-2}(\mu(L_1),L_1),
    \]
    respectively. Here, we identify $\alpha a\beta b$ with the part of $w_{L_0}$ in the fiber $\pi^{-1}(f^\mu)$. The cases of other products are similar. First, by \cref{proposition:relation}, the product $a\beta b$ already makes sense in the Floer complex $\mathit{CF}^\ast(L_0(t'''),L_0(t))$ for $t'''-t$ generic and sufficiently large. In terms of the fiberwise generators, we have
    \[
    \mu_0^{t'\rightarrow t}\cdot\zeta_0^{t''\rightarrow t'}\cdot\kappa_0^{t'''\rightarrow t''}=\eta_0^{t'''\rightarrow t}.
    \]
    To further multiply with the generator $\alpha$ identified above, we need to consider the once wrapping $\mu(L_0(t''''))$ of $L_0(t'''')$ and replace $L_0(t''')$ with $\mu(L_0(t'''))$. We form the product
    \[
    \mathit{CF}^\ast(\mu(L_0(t''')),L_0(t))\otimes\mathit{CF}^\ast(\mu(L_0(t'''')),\mu(L_0(t''')))\longrightarrow\mathit{CF}^\ast(\mu(L_0(t'''')),L_0(t))
    \]
    for $t''''-t'''$ large enough and generic, see \cref{fig:3p}, where the projections of these Lagrangian submanifolds under $\pi$ are depicted.
    \begin{figure}[!htb]
    	\centering
        \includegraphics{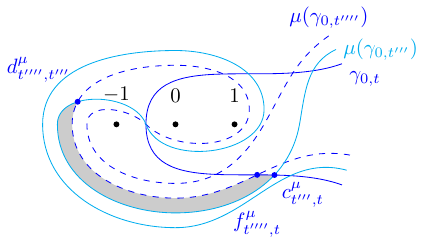}
        \caption{The arcs $\gamma_{0,t}$, $\mu(\gamma_{0,t'''})$ and $\mu(\gamma_{0,t''''})$ with a shaded triangle contributing to a Floer product.}\label{fig:3p}
    \end{figure}
    Note that the generators over $d_{t'''',t'''}^\mu$ and $c_{t''',t}^\mu$ have degree $-1$, by \cref{proposition:relation} and our argument above, they correspond respectively to the images of (Laurent polynomial multiplications of) the generators $a\beta b$ under the continuation map \eqref{eq:K} and $\alpha$, while the generators over $f_{t'''',t}^\mu$ have degree $-2$. By counting $J$-holomorphic sections of $\pi\colon W_k\rightarrow\mathbb{C}$ over the triangle formed by $\gamma_{0,t}$, $\mu(\gamma_{0,t'''})$ and $\mu(\gamma_{0,t''''})$ with vertices $d_{t'''',t'''}^\mu$, $c_{t''',t}^\mu$ and $f_{t'''',t}^\mu$ (see \cref{fig:3p}), we see that the product $\mu_0^{t'\rightarrow t}\cdot\zeta_0^{t''\rightarrow t'}\cdot\kappa_0^{t'''\rightarrow t''}\cdot\varsigma_0^{t''''\rightarrow t'''}$ survives in
    \[
    \mathit{CF}^0(\ell_{0,--}^\mu(t''''),\ell_{0,--}(t))[2]\subset\mathit{CF}^{-2}(\mu(L_0(t'''')),L_0(t)).
    \]
    Recall that the part of $w_{L_0}^{t''''\rightarrow t}$ in the Floer complex $\mathit{CF}^\ast(\ell_{0,--}^\mu(t''''),\ell_{0,--}(t))$ is by definition the continuation image of $\vartheta_0^{t''''\rightarrow t}$ under the map
    \[
    \mathit{CF}^\ast(\ell_0(t''''),\ell_0(t))\longrightarrow\mathit{CF}^\ast(\ell_{0,--}^\mu(t''''),\ell_{0,--}(t))
    \]
    induced by the continuation map $K_{L_0,L_0}^{t'''',t,\mu}$. Since the shaded triangle in \cref{fig:3p} does not contain any critical value of $\pi$, the shrinking argument used previously implies that
    \[
    w_{L_0}^{t''''\rightarrow t}|_{\mathit{CF}^\ast(\ell_{0,--}^\mu(t''''),\ell_{0,--}(t))}=\mu_0^{t'\rightarrow t}\cdot\zeta_0^{t''\rightarrow t'}\cdot\kappa_0^{t'''\rightarrow t''}\cdot\varsigma_0^{t''''\rightarrow t'''}.
    \]
    Passing to the homotopy colimits as $t,t',t'',t'''\rightarrow\infty$, we have verified that $\alpha a\beta b$ coincides with the part of $w_{L_0}$ above the fiber $\pi^{-1}(f^\mu)$ in the fiberwise wrapped Fukaya category $\mathcal{W}(W_k,\pi)$.
    
    Finally, we consider the localization of $\mathcal{W}_k(W_k,\pi)$ along the natural transformation $\mathit{id}\Rightarrow\mu$. In the localization, multiplication by the continuation elements $w_{L_i}$ iteratively induces isomorphisms
    \[
    L_i\xrightarrow{w_{L_i}}\mu(L_i)\xrightarrow{w_{L_i}}\mu^2(L_i)\xrightarrow{w_{L_i}}\cdots
    \]
    in $\mathcal W(W_k;\mathbb{K})$ for $i \in \{0,1\}$. In particular, there are quasi-isomorphisms
    \begin{equation}\label{eq:CW}
    \mathit{CW}^\ast(L_i,L)\cong\hocolim_{n\rightarrow\infty}\mathit{CW}^\ast_\pi(\mu^n(L_i),L)
    \end{equation}
    for any object $L$ of $\mathcal{W}(W_k,\pi)$. Since the continuation elements are given by $a\beta b\alpha+\alpha a\beta b$ and $b\alpha a\beta+\beta b\alpha a$ for $L_0$ and $L_1$, respectively, this allows us to compute the endomorphism $A_\infty$-algebra $\mathcal{W}_k$ of the objects $L_0$ and $L_1$ explicitly using \eqref{eq:CW}. Since we have proved in \cref{lemma:dg} that $\mathcal{W}_{k,\pi}$ is a dg algebra, its localization $\mathcal{W}_k$ is also a dg algebra, which is generated by $x_1^{\pm1},x_2^{\pm1},a,b,\alpha$ and $\beta$ with the relations and differentials as stated. It follows from \eqref{eq:diff-ab} that the negative powers $x_1^{-1}$ and $x_2^{-1}$ do not contribute to the cohomology, so throwing them away does not affect $\mathcal{W}_k$ up to quasi-isomorphism.
    \end{proof}
    
    The following is a straightforward consequence of \cref{theorem:dga}.
    
    \begin{corollary}\label{corollary:H0}
    After taking a quotient identifying all the idempotents, the $\mathbb{K}$-algebra $H^0(\mathcal{W}_k)$ is generated by $a$ and $b$ with the relations $a^2=b^2=0$, $ab=ba$ and $(ab+1)^k=1$. In particular, $H^0(\mathcal{W}_k)$ is finite-dimensional and $H^0(\mathcal{W}_k)^\times\cong\mathbb{Z}_k$.
    \end{corollary}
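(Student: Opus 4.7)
The plan is to compute $H^0(\mathcal{W}_k)$ directly from the dg algebra description in \cref{theorem:dga}. Since the dg algebra is concentrated in non-positive degrees, the degree-$0$ cocycles coincide with all degree-$0$ elements, so $H^0(\mathcal{W}_k)$ is the quotient $(\mathcal{W}_k)^0 / d((\mathcal{W}_k)^{-1})$, and the work is to enumerate the image of $d$.

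The first step is to list the degree-$(-1)$ part. Using $\alpha^2=\beta^2=0$ together with the idempotent decomposition $\alpha=e_0\alpha e_0$ and $\beta=e_1\beta e_1$, the degree-$(-1)$ part is generated as a $\mathbb{K}[x_1,x_2]$-module by the eight paths $\alpha$, $\beta$, $\alpha a$, $a\beta$, $b\alpha$, $\beta b$, $a\beta b$, and $b\alpha a$. The Leibniz rule, together with $d\alpha=(x_2+1)e_0$, $d\beta=(x_1^k+x_2)e_1$, and $da=db=0$, yields $d(a\beta b)=a(d\beta)b=(x_1^k+x_2)(x_1-1)e_0$ and $d(b\alpha a)=b(d\alpha)a=(x_2+1)(x_1-1)e_1$, where I use the collapsing identities $ab=(x_1-1)e_0$ and $ba=(x_1-1)e_1$. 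The remaining boundaries $(x_2+1)a$, $(x_1^k+x_2)a$, $(x_2+1)b$, $(x_1^k+x_2)b$ are read off directly.

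The next step is to compute the four components of $H^0(\mathcal{W}_k)$. In $e_0\mathcal{W}_k e_0$ the boundaries impose $x_2+1=0$ and $(x_1^k-1)(x_1-1)=0$, so this component is $\mathbb{K}[x_1]/((x_1-1)(x_1^k-1))$; an analogous argument (with $x_2=-x_1^k$) gives the same quotient for $e_1\mathcal{W}_k e_1$. The off-diagonal components $e_1\mathcal{W}_k e_0$ and $e_0\mathcal{W}_k e_1$ each reduce to $\mathbb{K}[x_1]/(x_1^k-1)$ times the generator $b$ or $a$, since the boundaries $(x_2+1)b$, $(x_1^k+x_2)b$ (and symmetric ones with $a$) kill both $x_2+1$ and $x_1^k+x_2$.

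Finally, identifying the idempotents $e_0=e_1$ amalgamates the two diagonal pieces and forces both $x_2+1=0$ and $x_1^k+x_2=0$ simultaneously, hence $x_1^k=1$; using $x_1=ab+1$ this becomes $(ab+1)^k=1$. The relations $a^2=b^2=0$ persist from the underlying quiver structure. Finite dimensionality and the cyclic group $\langle ab+1\rangle\cong\mathbb{Z}_k$ of units then follow immediately from the resulting presentation. The main subtlety I expect is keeping track of all length-$\geq 2$ path compositions in degree $-1$ to verify that no further relations appear beyond those listed.
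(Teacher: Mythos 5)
Your computation is correct and follows the same route as the paper, which simply reads the presentation off from \cref{theorem:dga}; the paper's own proof only spells out the finite-dimensionality step (the generators $(ab)^n$ together with $(ab+1)^k=1$), whereas you fill in the cocycle/coboundary bookkeeping that the paper treats as immediate. Your explicit list of the eight degree-$(-1)$ module generators, their differentials, and the resulting four components of $H^0(\mathcal{W}_k)$ all check out.
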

    \begin{proof}
    To see the finite-dimensionality of $H^0(\mathcal{W}_k)$, note that the only non-zero generators are $(ab)^n$ for $n\geq 0$. On the other hand, the relation $(ab+1)^k=1$ shows that $(ab)^k$ can be expressed as linear combinations over $\mathbb{K}$ of $1,\ldots,(ab)^{k-1}$.
    \end{proof}
    
    \section{Derived contraction algebras and Ginzburg dg algebras}\label{section:contraction_and_ginz}
    In \cref{section:contraction}, we relate the wrapped Fukaya $A_\infty$-algebra of the double bubble plumbings $W_k$ to the derived contraction algebra associated to the crepant partial resolution of a compound $A_2$ singularity. This allows us to conclude homological mirror symmetry for the double bubble plumbings $W_k$. In \cref{section:ginzburg}, we relate the wrapped Fukaya $A_\infty$-algebra of $W_k$ to the Ginzburg dg algebra of the $2$-cycle quiver with a certain potential.
    
    \subsection{Relative wrapped Fukaya category and mirror symmetry}\label{section:contraction}
    
    Let $\mathbb{K}$ be any field. Our description of the wrapped Fukaya $A_\infty$-algebra $\mathcal{W}_k$ in \cref{theorem:dga} is closely related to the work of Evans--Lekili \cite{evle}, which interprets the derived contraction algebras associated to crepant partial resolutions of compound $A_n$ singularities as relative Fukaya categories of disks with marked points. In particular, one can interpret the derived contraction algebras $\mathcal{A}_k$ associated to the crepant (partial) resolutions $Y_k\rightarrow\mathit{Spec}(R_k)$ of the singularities
    \begin{equation}\label{eq:ca2_sing}
        R_k=\frac{\mathbb{K}[u,v,x,y]}{\left(uv-xy\left((x+1)^k+y-1\right)\right)}, \quad k\geq1,
    \end{equation}
    as the wrapped Fukaya categories of a disk relative to three interior points. More precisely, for $k\geq1$, consider the quiver 
    \[
    	\begin{tikzcd}
    	\bullet \arrow[loop left,"\alpha"] \arrow[r,bend left,"a"] & \bullet \arrow[loop right,"\beta"] \arrow[l,bend left,"b"]
    	\end{tikzcd}
    \]
    in the statement of \cref{theorem:dga}. By \cite[Theorem 6.3]{evle} $\mathcal{A}_k$ is quasi isomorphic to the $\mathbb{K}[x,y]$-linear path algebra associated to this quiver, with gradings $|a|=|b|=0$, $|\alpha|=|\beta|=-1$, and relations
    \[
    ab=xe_0,\quad ba=xe_1,\quad \alpha^2=\beta^2=0.
    \]
    The differential is specified on the generators by
    \[
    da=db=0,\quad d\alpha=ye_0,\quad d\beta=\left((x+1)^k+y-1\right)e_1
    \]
    and extended to the whole path algebra by the Leibniz rule. Here we have abused the notation intentionally to emphasize the relation with the dg algebra model of $\mathcal{W}_k$ in \cref{theorem:dga}.
    
    \begin{figure}[!htb]
    	\centering
    	\includegraphics{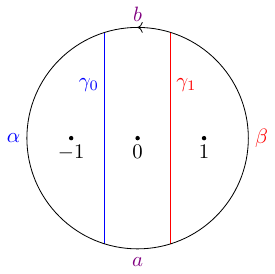}
    	\caption{The relative wrapped Fukaya category of $(\mathbb{C},\{-1,0,1\})$.}\label{fig:base1}
    \end{figure}
    
    The above model of the derived contraction algebra $\mathcal{A}_k$ computes the endomorphism $A_\infty$-algebra of the objects $\gamma_0,\gamma_1\subset\mathbb{C}\setminus\{-1,0,1\}$ in a (base changed) relative wrapped Fukaya category, see \cref{fig:base1} that we now describe. The generators $a,b,\alpha$ and $\beta$ in the dg algebra above correspond to Reeb chords on the boundary of the disk from $\gamma_0$ to $\gamma_1$, from $\gamma_1$ to $\gamma_0$, from $\gamma_1$ to itself and from $\gamma_0$ to itself, respectively. Note that the wrapped Fukaya category of $\mathbb{C}$ relative to the three points $\{-1,0,1\}$, which we denote by $\mathcal{W}(\mathbb{C},\{-1,0,1\})$, is an $A_\infty$-category linear over the polynomial ring $\mathbb{K}[t_0,t_1,t_2]$, where the variables $t_0,t_1,t_2$ correspond to the points $-1,0,1$ which partially compactifies $\mathbb{C}\setminus\{-1,0,1\}$ to $\mathbb{C}$. Now consider the base change
    \begin{equation}\label{eq:W1}
    \mathcal{W}(\mathbb{C},\{-1,0,1\})\otimes_{\mathbb{K}[t_0,t_1,t_2]}\mathbb{K}[x,y],
    \end{equation}
    where $\mathbb{K}[x,y]$ is regarded as a $\mathbb{K}[t_0,t_1,t_2]$-algebra via the homomorphism given by
    \[
    t_i\mapsto f_i(x,y)\textrm{ for }i \in \{0,1,2\}
    \]
    where in this case
    \[
    f_0(x,y)=y,\quad f_1(x,y)=x,\quad f_2(x,y)=(x+1)^k+y-1.
    \]
    Now the dg algebra $\mathcal{A}_k$ is quasi-isomorphic to the endomorphism $A_\infty$-algebra of the objects $\gamma_0$ and $\gamma_1$ of the Fukaya category \eqref{eq:W1}.
    
    In the above, the smoothness of the curves $\{f_i(x,y)=0\}\subset\mathbb{K}^2$ for $i\in \{0,1,2\}$ ensures that the compound $A_2$ singularity of $\mathit{Spec}(R_k)$ at the origin admits a small resolution, and the order of the polynomials $f_0,f_1,f_2$ determines the specific resolution. As in \cite[Lemma 3.1]{sw}, one can show that the exceptional locus of $Y_k\rightarrow\mathit{Spec}(R_k)$ consists of two floppable $(-1,-1)$ curves meeting at one point.
    
    Our dg algebra model of the wrapped Fukaya $A_\infty$-algebra $\mathcal{W}_k$ obtained in \cref{theorem:dga} is related to the above model of $\mathcal{A}_k$ by the change of variables
    \[
    x_1=x+1,\quad x_2=y-1,
    \]
    so the polynomials $f_0,f_1$ and $f_2$ in the base change are now replaced with
    \[
    g_0(x_1,x_2)=x_2+1,\quad g_1(x_1,x_2)=x_1-1,\quad g_2(x_1,x_2)=x_1^k+x_2.
    \]
    We have therefore proved the following lemma.
    
    \begin{lemma}\label{lemma:wfuk_contraction}
    There is a quasi-isomorphism of $A_\infty$-algebras $\mathcal{A}_k\cong\mathcal{W}_k$ over $\Bbbk$.
    \qed
    \end{lemma}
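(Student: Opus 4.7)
The plan is to prove \cref{lemma:wfuk_contraction} by exhibiting an explicit isomorphism of dg algebras (in particular, a quasi-isomorphism of $A_\infty$-algebras) between the two models already made explicit just above the lemma. Both $\mathcal{W}_k$ and $\mathcal{A}_k$ are presented as the path algebra of the same $\Bbbk$-quiver
\[
\begin{tikzcd}
\bullet \arrow[loop left,"\alpha"] \arrow[r,bend left,"a"] & \bullet \arrow[loop right,"\beta"] \arrow[l,bend left,"b"]
\end{tikzcd}
\]
over a polynomial ring in two central variables, with $|a|=|b|=0$ and $|\alpha|=|\beta|=-1$, and with the defining relations $\alpha^2=\beta^2=0$ together with a quadratic relation that identifies $ab$ and $ba$ with (central) scalars times the idempotents. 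So the task reduces to matching the two central variables and the two polynomial differentials.

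First, I would write down the two presentations side by side: $\mathcal{W}_k$ has coefficient ring $\mathbb{K}[x_1,x_2]$ with $ab=(x_1-1)e_0$, $ba=(x_1-1)e_1$, $d\alpha=(x_2+1)e_0$, $d\beta=(x_1^k+x_2)e_1$; while the Evans--Lekili model of $\mathcal{A}_k$ uses coefficient ring $\mathbb{K}[x,y]$ with $ab=xe_0$, $ba=xe_1$, $d\alpha=ye_0$, $d\beta=((x+1)^k+y-1)e_1$. The natural candidate for the isomorphism is the one suggested by the relations, namely the affine change of variables
\[
\Phi\colon \mathbb{K}[x,y]\longrightarrow\mathbb{K}[x_1,x_2],\qquad x\longmapsto x_1-1,\quad y\longmapsto x_2+1,
\]
extended to an algebra map between the corresponding path algebras by sending the quiver generators $a,b,\alpha,\beta$ on the source to the generators of the same names on the target.

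Second, I would verify that $\Phi$ is compatible with the relations and the differentials. Compatibility with the relations is immediate since $\Phi(x)=x_1-1$ matches $ab=(x_1-1)e_0$ and $ba=(x_1-1)e_1$, and $\alpha^2=\beta^2=0$ on both sides. For the differentials, $\Phi(y)=x_2+1$ gives $\Phi(d\alpha)=\Phi(ye_0)=(x_2+1)e_0=d\alpha$, and
\[
\Phi\bigl((x+1)^k+y-1\bigr)=((x_1-1)+1)^k+(x_2+1)-1=x_1^k+x_2,
\]
so $\Phi(d\beta)$ matches $d\beta$ on the $\mathcal{W}_k$ side. Since $\Phi$ is clearly invertible (with inverse $x_1\mapsto x+1$, $x_2\mapsto y-1$), this gives an isomorphism of dg $\Bbbk$-algebras, hence in particular a quasi-isomorphism of $A_\infty$-algebras.

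No step here is an obstacle; the only content is the translation between the two sets of coordinates, and this was essentially observed in the two paragraphs preceding the lemma. The real work has already been done in \cref{theorem:dga} (the Fukaya-theoretic computation producing the polynomials $g_0,g_1,g_2$) and in the cited \cite[Theorem 6.3]{evle} (the algebraic identification of $\mathcal{A}_k$ with the corresponding relative Fukaya-type dg model). The proposed proof is therefore a one-paragraph verification that the change of variables $x=x_1-1$, $y=x_2+1$ sends the triple $(f_0,f_1,f_2)=(y,x,(x+1)^k+y-1)$ to $(g_0,g_1,g_2)=(x_2+1,x_1-1,x_1^k+x_2)$ and is an isomorphism of the two path-algebra presentations.
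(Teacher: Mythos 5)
Your proof is correct and is essentially the paper's own argument: the paper "proves" this lemma precisely by observing, in the two paragraphs preceding it, that the affine change of variables $x_1=x+1$, $x_2=y-1$ carries the Evans--Lekili presentation of $\mathcal{A}_k$ (with $f_0=y$, $f_1=x$, $f_2=(x+1)^k+y-1$) onto the presentation of $\mathcal{W}_k$ from \cref{theorem:dga} (with $g_0=x_2+1$, $g_1=x_1-1$, $g_2=x_1^k+x_2$), which is exactly the isomorphism $\Phi$ you write down.
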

    
    The existence of a $1$-dimensional model of the wrapped Fukaya category $\mathcal{W}(W_k;\mathbb{K})$ is compatible with a conjecture by Lekili--Segal \cite[Conjecture E]{lese} relating the wrapped Fukaya category of the total space of an algebraic torus fibration with the wrapped Fukaya category of its base relative to the discriminant locus. In our case, the algebraic torus fibration is given by the Morse--Bott--Lefschetz fibration $\pi\colon W_k\rightarrow\mathbb{C}$, with discriminant locus $\{-1,0,1\}$. Note that \cref{lemma:wfuk_contraction} gives a fully faithful embedding
    \[
    \mathcal{W}(W_k;\mathbb{K})\hookrightarrow\mathcal{W}(\mathbb{C},\{-1,0,1\})\otimes_{\mathbb{K}[t_0,t_1,t_2]}\mathbb{K}[x,y].
    \]
    
    One consequence of realizing $\mathcal{W}(W_k;\mathbb{K})$ as a subcategory of the relative Fukaya category \eqref{eq:W1} is that the continuation elements $w_{L_0}$ and $w_{L_1}$ (or their linear combination $w_{L_0}e_0+w_{L_1}e_1$) used to localize $\mathcal{W}(W_k,\pi)$ in \cref{subsection:wrapping} can be geometrically interpreted as a simple Reeb orbit in the boundary of a disk $D\subset\mathbb{C}$ with radius $>1$, see \cref{fig:base1}. With respect to a suitably chosen Hamiltonian, this simple Reeb orbit corresponds to a class $w\in\mathit{SH}^{-2}(W_k;\mathbb{K})$, which endows $\mathcal{W}(W_k;\mathbb{K})$ with the structure of an $A_\infty$-category over $\mathbb{K}[w]$, see \cite[Remark 2.5]{evle} and \cite[Section 6.2]{hk}.
    
    We end this subsection by proving the following theorem, which establishes mirror symmetry between the Weinstein manifold $W_k$ and the $3$-dimensional Calabi--Yau variety $Y_k$.
    
    \begin{theorem}\label{theorem:MS}
        Let $\mathbb{K}$ be any field and $k\geq1$. There is an equivalence of categories
        \[
        D^\mathit{perf}\mathcal{W}(W_k;\mathbb{K})\cong D^b\mathit{Coh}(Y_k)/\langle\mathcal{O}_{Y_k}\rangle.
        \]
    \end{theorem}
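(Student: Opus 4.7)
The plan is to chain together three equivalences, each of which has already been essentially established either in the excerpt or in the cited literature. The combined proof should therefore be quite short, with the substantive content being the Floer-theoretic computation of $\mathcal{W}_k$ done in \cref{section:computation} and the algebraic identification in \cref{lemma:wfuk_contraction}.

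First, I would invoke the generation theorems of Chantraine--Dimitroglou Rizell--Ghiggini--Golovko \cite{cdrgg} and Ganatra--Pardon--Shende \cite{gps}, which were noted in \cref{section:cotangent_fibers} to imply that the two cotangent fibers $L_0, L_1$ generate the wrapped Fukaya category $\mathcal{W}(W_k;\mathbb{K})$. Passing to perfect modules over the endomorphism $A_\infty$-algebra of $L_0\oplus L_1$ then yields a triangulated equivalence
\[
D^\mathit{perf}\mathcal{W}(W_k;\mathbb{K})\cong D^\mathit{perf}(\mathcal{W}_k).
\]
Second, \cref{lemma:wfuk_contraction} gives a quasi-isomorphism of $A_\infty$-algebras over $\Bbbk = \mathbb{K}e_0\oplus\mathbb{K}e_1$ between $\mathcal{W}_k$ and the derived contraction algebra $\mathcal{A}_k$. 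Quasi-isomorphic $A_\infty$-algebras have equivalent derived categories of perfect modules, so
\[
D^\mathit{perf}(\mathcal{W}_k)\cong D^\mathit{perf}(\mathcal{A}_k).
\]
Third, I would appeal to the Kalck--Yang equivalence \cite{ky} between the derived category of perfect modules over the derived contraction algebra associated to a crepant (partial) resolution and the relative singularity category, which was already recalled in the introduction:
\[
D^\mathit{perf}(\mathcal{A}_k)\cong D^b\mathit{Coh}(Y_k)/\langle\mathcal{O}_{Y_k}\rangle.
\]
Concatenating the three displayed equivalences yields the theorem.

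The only non-routine step is verifying that the hypotheses of the Kalck--Yang theorem apply to the resolution $Y_k \to \mathit{Spec}(R_k)$ uniformly in $k\geq1$ and independently of the characteristic of $\mathbb{K}$. By the discussion following \cref{lemma:wfuk_contraction}, the polynomials $g_0, g_1, g_2$ defining the contraction cut out smooth curves in $\mathbb{K}^2$, which (as in \cite[Lemma 3.1]{sw}) ensures that $R_k$ has compound $A_2$ singularity at the origin and that the exceptional locus of $Y_k\to\mathit{Spec}(R_k)$ consists of two floppable $(-1,-1)$ curves meeting at one point. This is precisely the geometric setting in which Kalck--Yang's equivalence is formulated, so the application is valid. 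I expect this verification to be the main (but still minor) obstacle, and it can be handled by a brief remark referencing the explicit form of the singularity \eqref{eq:ca2_sing} together with the discussion in \cref{section:contraction}.
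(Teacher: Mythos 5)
Your overall route coincides with the paper's: reduce to $\mathcal{A}_k$ via \cref{lemma:wfuk_contraction} (generation by the cocores plus quasi-isomorphism invariance of $D^\mathit{perf}$), and then identify $D^\mathit{perf}(\mathcal{A}_k)$ with the relative singularity category. The first two links in your chain are exactly what the paper does.

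However, the verification you single out as the ``only non-routine step'' is not the one that actually needs to be done, and as written there is a gap in the third link. Checking that the curves $\{g_i=0\}$ are smooth, that $R_k$ has a compound $A_2$ singularity, and that the exceptional locus consists of two floppable $(-1,-1)$ curves only establishes the existence and shape of the small (partial) resolution $Y_k\to\mathit{Spec}(R_k)$; it does not by itself justify the equivalence $D^\mathit{perf}(\mathcal{A}_k)\cong D^b\mathit{Coh}(Y_k)/\langle\mathcal{O}_{Y_k}\rangle$. The point is that $Y_k$ is in general \emph{singular} (for every even $k$ the point $(u,v,x,y)=(0,0,-2,0)$ is a singular point of $\mathit{Spec}(R_k)$ away from the origin, and hence of $Y_k$), so one cannot treat the Kalck--Yang identification as a black box formulated for smooth resolutions. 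The paper's proof supplies the missing argument: $\mathcal{A}_k$ is the Drinfeld localization of $\mathit{End}(\mathcal{P})$ at the idempotent $\mathit{id}_{\mathcal{O}_{Y_k}}$, where $\mathcal{P}$ is Van den Bergh's tilting bundle, and the required equivalence $D^b\mathit{Coh}(Y_k)\cong D^\mathit{perf}(\mathit{End}(\mathcal{P}))$ holds provided $Y_k$ is a \emph{normal} variety; normality is then checked via Serre's criterion using that the singularities of $\mathit{Spec}(R_k)$ are isolated hypersurface singularities, uniformly over any field $\mathbb{K}$. You should replace your smoothness-of-curves check with this normality argument (or an equivalent justification that the tilting equivalence survives the singularities of $Y_k$).
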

    \begin{proof}
    By \cref{lemma:wfuk_contraction} it is enough to establish the equivalence $D^\mathit{perf}(\mathcal{A}_k)\cong D^b\mathit{Coh}(Y_k)/\langle\mathcal{O}_{Y_k}\rangle$. Recall that there is a tilting bundle $\mathcal{P}\rightarrow Y_k$ constructed by Van den Bergh \cite{mv3}, with respect to which $\mathcal{A}_k$ is given by the Drinfeld localization of the endomorphism algebra $\mathit{End}(\mathcal{P})$ with respect to the idempotent $\mathit{id}_{\mathcal{O}_Y}$. Thus it is enough to show that $D^b\mathit{Coh}(Y_k)\cong D^\mathit{perf}\left(\mathit{End}(\mathcal{P})\right)$, which holds as long as $Y_k$ is a normal algebraic variety by \cite[Proposition 3.2.10]{mv3}. Since the singularities of $\mathit{Spec}(R_k)$ are isolated hypersurface singularities, it follows from Serre's criterion that $Y_k$ is normal over any field $\mathbb{K}$.
    \end{proof}
    
    \begin{remark}
    Note that when $k\geq1$ is even, $(u,v,x,y)=(0,0,-2,0)$ is a singularity of the affine variety $\mathit{Spec}(R_k)$, therefore $Y_k$ is singular. On the other hand, when $k$ is odd, $Y_k$ can be either singular or smooth. The mirrors of $W_k$ considered by Smith--Wemyss in \cite{sw}, i.e., crepant resolutions of the singularities \eqref{eq:sing1}, are always smooth. One should think of the Smith--Wemyss mirrors of $W_k$ as approximations of the (singular) varieties $Y_k$ considered here.
    \end{remark}
    
    \subsection{Relation to a Ginzburg dg algebra}\label{section:ginzburg}
    
    Let $Q$ be a quiver and let $W \in \mathbb{K}Q/[\mathbb{K}Q,\mathbb{K}Q]$ be an element called a \textit{potential}. Let $\overline Q$ denote the graded quiver (sometimes called the enhanced quiver) with vertex set $Q_0$ and arrow set $Q_1$ consisting of the following
    \begin{itemize}
        \item an arrow $g\colon v \to w$ in degree $0$ for each $g\colon v\to w$ in $Q$,
        \item an arrow $g^\ast \colon w\to v$ in degree $-1$ for each $g\colon v\to w$ in $Q$,
        \item an arrow $h_v\colon v \to v$ in degree $-2$ for each $v \in Q_0$.
    \end{itemize}
    Given an arrow $g$ in $Q$, we define the cyclic differential of $W$ with respect to $g$ as
    \[
    \partial^\circ_gW \coloneqq  \sum_{W = pg q}qp.
    \]
    The sum is taken over all ways of writing the potential $W$ as $W = pgq$, e.g., if $W = ghgh'$, then $\partial^\circ_gW = hgh' + h'gh$.
    \begin{definition}[Ginzburg dg algebra]
        Let $(Q,W)$ be a quiver with potential $W$. The \emph{Ginzburg dg algebra} $\mathcal G(Q,W)$ is defined as the dg algebra whose underlying vector space is the path algebra $\mathbb{K}\overline Q$, and with differential defined on generators by
        \[
        dg = 0, \quad dg^\ast = \partial^\circ_gW, \quad dh_v = \sum_{\substack{\text{arrows $g$} \\ \text{starting at $v$}}} gg^\ast - \sum_{\substack{\text{arrows $g$} \\ \text{ending at $v$}}} g^\ast g,
        \]
        and extended to the entire path algebra by Leibniz rule and linearity.
    \end{definition}
    
    \begin{theorem}\label{theorem:ginzburg}
    Let $\mathbb{K}$ be any field and $k\geq1$. The Fukaya $A_\infty$-algebra $\mathcal{W}_k$ over $\mathbb{K}$ is quasi-isomorphic to the Ginzburg dg algebra $\mathcal{G}_k$ associated to the $2$-cycle quiver
    \[
    \begin{tikzcd}
    0\arrow[r,bend left,"e"] & 1 \arrow[l,bend left,"f"] 
    \end{tikzcd}
    \]
    with potential
    \[
    w_k=efe\left(1+(fe+1)+\cdots+(fe+1)^{k-1}\right)f.
    \]
    \end{theorem}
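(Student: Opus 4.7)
The plan is to invoke \cref{lemma:wfuk_contraction} to reduce the claim to showing that the derived contraction algebra $\mathcal{A}_k$ is quasi-isomorphic to the Ginzburg dg algebra $\mathcal{G}_k$ as $A_\infty$-algebras over $\Bbbk$. The first step is to compute the cyclic derivatives of $w_k$ in closed form. Using the identity $\sum_{j=0}^{k-1}(u+1)^j = ((u+1)^k - 1)/u$, one sees that $w_k$ is cyclically equivalent to $ef\cdot((1+ef)^k - 1)$, whence
\[
\partial^\circ_e w_k = f\cdot w'(ef), \qquad \partial^\circ_f w_k = e\cdot w'(fe), \qquad w(u)\coloneqq u(1+u)^k - u.
\]

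I would then construct an explicit $A_\infty$-morphism $\Phi\colon \mathcal{G}_k \to \mathcal{A}_k$ on generators by $e\mapsto a$, $f\mapsto b$, $e^* \mapsto \beta b$, $f^* \mapsto a\beta$, and $h_0, h_1 \mapsto 0$. Under these assignments the loops $ef$ and $fe$ in $\mathcal{G}_k$ match, after the change of variable $u = x_1-1$, the central loops $ab = (x_1-1)e_0$ and $ba = (x_1-1)e_1$ in $\mathcal{A}_k$. Moreover $\Phi(ee^*) = a\beta b = \Phi(f^*f)$ and $\Phi(ff^*) = (x_1-1)\beta = \Phi(e^*e)$, so the Ginzburg relations involving $dh_0$ and $dh_1$ are mapped to zero on the nose, and $h_0, h_1$ may indeed be sent to $0$ (or equivalently to the closed degree $-2$ elements $a\beta b\alpha + \alpha a\beta b$ and $\beta b\alpha a + b\alpha a\beta$).

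The main obstacle is the chain-map condition for $e^*$ and $f^*$: a direct computation gives $d\Phi(e^*) = (x_1^k + x_2)b$ whereas $\Phi(\partial^\circ_e w_k) = (x_1^k - 1 + k(x_1-1)x_1^{k-1})b$, and these differ by a term of the form $(x_2+1-k(x_1-1)x_1^{k-1})b$ which is cohomologous to zero in $e_1\mathcal{A}_k e_0$ by virtue of $d\alpha = (x_2+1)e_0$. I would absorb this discrepancy into higher $A_\infty$-components $\Phi^{\geq 2}$ constructed inductively using $\alpha, \beta$ as homotopy sources, and then verify that the induced map on cohomology is an isomorphism by checking that $H^0(\mathcal{A}_k)_{e_i}$ and $H^0(\mathcal{G}_k)_{e_i}$ are isomorphic finite-dimensional $\mathbb{K}$-algebras and promoting this via the exact $3$-Calabi--Yau structures present on both sides. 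A cleaner alternative, bypassing the explicit construction of higher $A_\infty$-terms, is to invoke the uniqueness of exact $3$-Calabi--Yau completions (Keller, Van den Bergh): any two exact $3$-CY dg algebras with isomorphic finite-dimensional zeroth cohomologies are quasi-isomorphic as $A_\infty$-algebras.
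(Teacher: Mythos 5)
Your overall strategy---reduce to comparing $\mathcal{A}_k$ with $\mathcal{G}_k$ via \cref{lemma:wfuk_contraction}, define a map on generators sending $e\mapsto a$, $f\mapsto b$, $h_i\mapsto 0$, match presentations of $H^0$, and then promote to a quasi-isomorphism---is exactly the paper's. However, two steps of the proposal do not work as written.

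First, the failure of the chain-map condition for $e^\ast$ and $f^\ast$ cannot be ``absorbed into higher $A_\infty$-components'': the linear term $\Phi^1$ of any $A_\infty$-morphism must itself satisfy $d\circ\Phi^1=\Phi^1\circ d$ (this is the one-input $A_\infty$-functor equation; $\Phi^{\geq2}$ only enters with two or more inputs), so a discrepancy at the level of $\Phi^1$ is fatal rather than correctable. The right fix is to modify $\Phi^1$ itself by a primitive of the discrepancy: the paper sends $e^\ast\mapsto\beta b-b\alpha$ and $f^\ast\mapsto a\beta-\alpha a$, and the terms $-b\alpha$, $-\alpha a$ exactly cancel the $(x_2+1)$-term via $d(b\alpha)=(x_2+1)b$, yielding a strict map of dg algebras. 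Note moreover that your discrepancy $(x_2+1-k(x_1-1)x_1^{k-1})b$ is \emph{not} cohomologous to zero in general: the degree $-1$ part of the relevant morphism space is spanned over $\mathbb{K}[x_1^{\pm1},x_2^{\pm1}]$ by $\beta b$ and $b\alpha$, so the image of $d$ in degree $0$ is the ideal $(x_1^k+x_2,\,x_2+1)$, and $k(x_1-1)x_1^{k-1}$ does not lie in $(x_1^k-1)\subset\mathbb{K}[x_1^{\pm1}]$ for $k\geq2$ when $\mathrm{char}(\mathbb{K})\nmid k$ (evaluate at a primitive $k$-th root of unity). The differential the paper actually uses is $de^\ast=((fe+1)^k-1)f$, which carries no such extra term; if you insist on the full cyclic derivative $w'(fe)f$ you must additionally reparametrize the loop $ef$, not just add a homotopy.

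Second, the endgame is a genuine gap. There is no theorem that two exact $3$-Calabi--Yau dg algebras with isomorphic finite-dimensional $H^0$ are quasi-isomorphic; such a statement is in the spirit of the Donovan--Wemyss conjecture and cannot be cited as known. The paper instead bootstraps the isomorphism from $H^0$ to all degrees by induction: since $\mathcal{G}_k$ is homologically smooth and concentrated in non-positive degrees, \cite[Proposition 2.5]{ky} produces for each $j$ a perfect module $\mathcal{M}$, built from direct sums and summands of the diagonal bimodule, with $H^{-j-1}(\mathcal{G}_k)\cong H^0(\mathcal{M})$; compatibility of the induced functor $\Phi\colon D^\mathit{perf}(\mathcal{G}_k)\to D^\mathit{perf}(\mathcal{A}_k)$ with these operations then transports the isomorphism degree by degree. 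You would need to supply an argument of this kind (or some other concrete mechanism) to conclude.
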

    \begin{proof}
    First we give an explicit description of $\mathcal G_k$ in our situation. We have generators $e$ and $f$ in degree $0$, generators $e^\ast$ and $f^\ast$ in degree $-1$ and generators $h_1$ and $h_2$ in degree $-2$. The differentials of the generators are given by
        \begin{align*}
            de &= df = 0 \\
            de^\ast &= fe(1+(fe+1)+\cdots+(fe+1)^{k-1})f \\
            df^\ast &= efe(1+(fe+1)+\cdots+(fe+1)^{k-1}) \\
            dh_0 &= ee^\ast - f^\ast f \\
            dh_1 &= ff^\ast - e^\ast e.
        \end{align*}
    
    By \cref{lemma:wfuk_contraction}, it suffices to identify the derived contraction algebra $\mathcal{A}_k$ with the Ginzburg dg algebra $\mathcal G_k$. To do this, define a map $\phi\colon \mathcal{G}_k\rightarrow\mathcal{A}_k$ by
    \[
    \begin{split}
    &e\mapsto a,\quad f\mapsto b,\quad e^\ast\mapsto \beta b-b\alpha,\quad f^\ast\mapsto a\beta-\alpha a,\\
    &h_1\mapsto 0,\quad h_2\mapsto 0.
    \end{split}
    \]
    It is straightforward to check that $\phi$ is a map of dg algebras. To show that it is a quasi-isomorphism, first notice that as a $\mathbb{K}$-algebra, $H^0(\mathcal{G}_k)$ is generated by the elements $u_0,u_1,e,f$, where $u_0$ and $u_1$ are two idempotents, together with relations
    \[
    \left((fe+1)^k-1\right)u_1f=0,\quad \left((ef+1)^k-1\right)u_0e=0
    \]
    coming from $de^\ast$ and $df^\ast$, where $u_0$ and $u_1$ are idempotents. Note that one can use the identity
    \[
    efe\left(1+(fe+1)+\cdots+(fe+1)^{k-1}\right)=ef\left(1+(ef+1)+\cdots+(ef+1)^{k-1}\right)e
    \]
    to get the second relation above from $de^\ast$. On the other hand, as a $\mathbb{K}$-algebra $H^0(\mathcal{A}_k)$ is generated by $e_0,e_1,a,b$ together with the relations
    \begin{align*}
        \left((ab+1)^k-1\right)e_0a&=\left((x+1)^k-1\right)e_0a=\left((x+1)^k-1\right)ae_1=-yae_1=-ye_0a=0, \\
        \left((ba+1)^k-1\right)e_1b&=\left((x+1)^k-1\right)e_1b=-ye_1b=-bye_0=0.
    \end{align*}
    It is clear that the map $\phi$ matches the generators and is compatible with the relations, therefore induces an isomorphism of $\mathbb{K}$-algebras $H^0(\mathcal{G}_k)\cong H^0(\mathcal{A}_k)$. 
    
    Fix $j>0$. Suppose $\phi$ induces an isomorphism $H^i(\mathcal{G}_k)\cong H^i(\mathcal{A}_k)$ between finite-dimensional vector spaces for all $-j\leq i\leq 0$. Since $\mathcal{G}_k$ is homologically smooth and supported in non-positive degrees, it follows from \cite[Proposition 2.5]{ky} that there exists a perfect module $\mathcal{M}$ over $\mathcal{G}_k$ built by taking direct sums and direct summands of the diagonal bimodule such that $H^{-j-1}(\mathcal{G}_k)\cong H^0(\mathcal{M})$. Since the morphism $\phi$ induces a functor $\Phi\colon D^\mathit{perf}(\mathcal{G}_k)\rightarrow D^\mathit{perf}(\mathcal{A}_k)$ between derived categories of perfect modules and is compatible with taking direct sums and direct summands, there is a commutative diagram
    \[
    \begin{tikzcd}
    	H^{-j-1}(\mathcal{G}_k) \arrow[d,"\cong"'] \arrow[r,"\phi"] &H^{-j-1}(\mathcal{A}_k) \arrow[d,"\cong"] \\
    	H^0(\mathcal{M}) \arrow[r,"H^0(\Phi)"] &H^0\left(\Phi(\mathcal{M})\right)
    \end{tikzcd}
    \]
    In the above, the map $H^0(\Phi)$ is an isomorphism since we have proved that $\phi$ induces an isomorphism on $H^0$ and since $\mathcal{M}$ is obtained by taking direct sums and direct summands of the diagonal bimodule $\mathcal{G}_k$. By commutativity, $\phi$ also induces an isomorphism on $H^{-j-1}$. The result now follows from induction.
    \end{proof}
    
    \begin{remark}
    It is known that the cohomology $H^\ast(\mathcal{A}_k)$ is $2$-periodic for $\ast\leq0$, see \cite[Section 5.2]{mb}. Since $H^{-1}(\mathcal{A}_k)=0$, we actually have $H^\ast(\mathcal{A}_k)\cong H^0(\mathcal{A}_k)[w]$ for some periodicity element $w\in H^{-2}(\mathcal{A}_k)$. From the proof of \cref{theorem:dga}, it is not hard to see that $w$ corresponds to the continuation element $w_{L_0}e_0+w_{L_1}e_1$ in the wrapped Fukaya $A_\infty$-algebra $\mathcal{W}_k$.
    \end{remark}
    
    \begin{remark}
        The Weinstein manifold $W_k$ admits a description as a Weinstein connected sum of the Weinstein pairs $(T^\ast S^3,\Lambda_U)$ and $(T^\ast S^1 \times B^4,S^1 \times U)$, where $U \hookrightarrow S^3$ is the unknot, and $\Lambda_U$ denotes the unit conormal bundle, where the gluing map in the connected sum depends on $k \geq 0$. Equivalently $W_k$ is the result of attaching a ``generalized Weinstein handle'' $T^\ast(S^1 \times D^2)$ (where $D^2$ is the closed disk) to $T^\ast S^3$ along the unit conormal bundle $\Lambda_U \subset ST^\ast S^3$ in such a way that the gluing map realizes the identifications $\lambda \mapsto \mu$ and $\mu \mapsto \lambda + k\mu$ where $\lambda$ and $\mu$ are choices of longitudes and meridians of the boundaries of the two core solid tori in the gluing. This description allows us to compute the Chekanov--Eliashberg dg algebra of the Legendrian attaching spheres of $W_k$ using the gluing techniques developed in \cite{ae,ad}. Such a computation would offer an alternative proof of \cref{theorem:ginzburg} via the Bourgeois--Ekholm--Eliashberg surgery formula \cite{bee}. This is compatible with the result \cite[Corollary 4.16]{ad} for $k = 0$, since there is a Weinstein homotopy taking $W_0$ to the plumbing of two copies of $T^\ast S^3$ at two points. Such a Weinstein homotopy can be constructed by perturbing the Weinstein data by a smooth function $f \colon W_0 \to \mathbb{R}$ that is constant outside of a neighborhood of $Q_0 \cap Q_1$, and such that $f|_{Q_0 \cap Q_1}$ is a small Morse function with two critical points.
    \end{remark}
    
    \section{Unknottedness of clean Lagrangian intersections}\label{section:proof}
    
    Let us now state our main theorem.
    
    \begin{theorem}\label{theorem:main}
    Let $k\in\mathbb{Z}_{\geq0}$ be any non-negative integer. There is no Hamiltonian isotopy of the core spheres $Q_0,Q_1\subset W_k$ to a pair of exact Lagrangian spheres meeting cleanly in a circle that is knotted in either component.
    \end{theorem}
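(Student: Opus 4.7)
The plan is to argue by contradiction, combining \cref{lemma:classification} with classical rigidity results about Dehn surgery on knots in $S^3$.

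Suppose $\varphi$ is a nearby Hamiltonian diffeomorphism such that $Q_0 \cap \varphi(Q_1) = Z'$ is a clean intersection along a circle that is knotted in, say, $Q_0 \cong S^3$. My first step is to perform the Lagrangian surgery of $Q_0$ and $\varphi(Q_1)$ along $Z'$ as in \cite{mw}; by their Lemma 6.2 this surgery preserves exactness, and thus yields a closed oriented exact Lagrangian $L \subset W_k$. Smoothly, $L$ is obtained by gluing two copies of $S^3 \setminus \nu(Z')$ along their common boundary torus using the identification of normal bundles dictated by $\eta = k$, and can be realized as a Dehn surgery on $Z' \subset S^3$ with slope determined by $k$ and the $\eta$-framing of $Z'$. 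When $Z'$ is the unknot this recovers $L(k,1)$ (or $S^1 \times S^2$ when $k=0$), matching the normalization in \cref{def:double_bubble}.

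Next I would invoke \cref{lemma:classification}: every prime summand of $L$ is either $S^1 \times S^2$, a lens space whose $|\pi_1|$ has prime factors dividing $k$, or a prism manifold with $\pi_1/Z(\pi_1) \cong D_{2n}$ for some even $n$. Since Dehn surgery on a non-trivial knot in $S^3$ is generically irreducible (the exceptions from the cabling conjecture do not arise at the slopes relevant to our framing), $L$ itself must be a single prime manifold of one of the above three types. I would then eliminate each case in turn: Gabai's Property R forces $Z'$ to be the unknot if $L \cong S^1 \times S^2$; Kronheimer--Mrowka--Ozsv\'{a}th--Szab\'{o}'s proof of Gordon's conjecture \cite[Theorem 1.1]{kmos}, asserting that lens space rational Dehn surgeries characterize the unknot, forces $Z'$ to be the unknot if $L$ is a lens space at the prescribed slope; and the prism manifold case is excluded by comparing the fundamental-group constraint in \cref{lemma:classification} against the classification of finite-fundamental-group Dehn surgeries on knotted circles in $S^3$, which at the slope in question rules out this quotient structure.

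In every case one concludes that $Z'$ is the unknot, contradicting the assumption. The main obstacle is the careful book-keeping of the Dehn surgery slope as a function of $k$ and the framing induced by $\eta$: the slope must be matched with the hypotheses of \cite[Theorem 1.1]{kmos} so as to rule out knotted $Z'$, and one must also ensure that the prism manifold case --- which does not naively arise from double bubble plumbing data --- is genuinely excluded rather than merely atypical. A subsidiary issue is the potential failure of primality of $L$ in low-$k$ edge cases, which may require separate treatment using the cabling conjecture or a direct analysis of the Lagrangian surgery trace.
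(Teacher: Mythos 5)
Your overall strategy---perform the Lagrangian surgery along $Z'$ to get an exact Lagrangian $L\subset W_k$, feed it into \cref{lemma:classification}, and invoke \cite[Theorem 1.1]{kmos} to force the unknot---is the same as the paper's, but as written the argument has two genuine gaps. First, you only treat the case where $Z'$ is knotted in one of the two spheres: your identification of $L$ with a Dehn surgery on $Z'\subset S^3$ requires the complement of $Z'$ in the \emph{other} sphere to be a solid torus. If $Z'$ is knotted in both $Q_0$ and $\varphi(Q_1)$ (which the theorem must also exclude), $L$ is a union of two non-trivial knot complements glued along their boundary tori; it is irreducible and contains an incompressible torus, and is not a surgery on a knot in $S^3$ at all. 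The paper handles this case separately: such an $L$ would be aspherical or spherical by \cref{proposition:cover} and \cref{lemma:aspherical}, contradicting the incompressible torus. Second, your dismissal of reducible surgeries via ``the exceptions from the cabling conjecture do not arise at the slopes relevant to our framing'' is unjustified---the cabling conjecture is open, and reducible integer surgeries genuinely exist (e.g.\@ $pq$-surgery on $(p,q)$-cables yields a lens space connect-summand). The paper instead bounds the number of prime summands by three, kills the third via \cref{proposition:TOI}, reduces to a connected sum of two lens spaces via Gordon--Luecke, and then rules that out by a separate argument: the central unit $\upsilon(h)$ produced by the cyclic quasi-dilation would have to be non-trivial in $Z(\mathbb{K}[\pi_1(K_0)\ast\pi_1(K_1)])$, but the center of a group algebra of a free product of non-trivial finite groups is trivial. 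You cannot skip this case.

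Two smaller points. The slope bookkeeping you flag as ``the main obstacle'' is resolved in the paper by the Hamiltonian invariance of the Massey products $\mu^3(e,f,e)$ (\cref{lemma:lambda}), which pins the framing of the new clean intersection to $k$ and gives $H^1(K;\mathbb{Z})\cong\mathbb{Z}_k$; the latter is also what excludes $S^1\times S^2$ summands for $k\geq1$ and, more simply than your appeal to classifications of finite surgeries, excludes prism manifolds outright because their first homology is not cyclic. Without these inputs your case analysis does not close.
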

    
    This section is devoted to the proof of \cref{theorem:main}. In \cref{section:dilations} we recall the definition of the notion of a cyclic quasi-dilation introduced by the second author in \cite{yle}. In \cref{section:classification} we prove a classification result of geometrically spherical exact Lagrangian submanifolds in the double bubble plumbings $W_k$. Finally, the proof of \cref{theorem:main} appears in \cref{section:proof_main_result}.
    
    \subsection{Cyclic dilations}\label{section:dilations} 
    
    Let $\mathit{SH}_{S^1}^\ast(M;\mathbb{K})$ be the $S^1$-equivariant symplectic cohomology, which is the cohomology of the complex
    \[
    \left(\mathit{SC}^\ast(M;\mathbb{K})\otimes_\mathbb{K}\mathbb{K}(\!(u)\!)/u\mathbb{K}[\![u]\!],\partial+u\delta_1+u^2\delta_2+\cdots\right),
    \]
    where $\partial$ is the usual Floer differential on the complex $\mathit{SC}^\ast(M;\mathbb{K})$ defining the (non-equivariant) symplectic cohomology, $\delta_1\colon \mathit{SC}^\ast(M;\mathbb{K})\rightarrow\mathit{SC}^{\ast-1}(M;\mathbb{K})$ is the cochain level BV operator, and the operations $\delta_j\colon \mathit{SC}^\ast(M;\mathbb{K})\rightarrow\mathit{SC}^{\ast+1-2j}(M;\mathbb{K})$ satisfy the structure equations $\sum_{j=0}^d\delta_j\delta_{d-j}=0$ of an $S^1$-complex for each $d\geq0$. 
    
    Let $M$ be a Liouville manifold with $c_1(M)=0$, and let $\mathbb{K}$ be any field. There is a marking map
    \begin{equation}\label{eq:mark}
    \mathbf{B}\colon\mathit{SH}_{S^1}^\ast(M;\mathbb{K})\longrightarrow\mathit{SH}^{\ast-1}(M;\mathbb{K})
    \end{equation}
    relating the symplectic cohomology and its $S^1$-equivariant version. We refer the reader to \cite[Proposition 2.9]{bo} for the detailed (especially the chain level) definition of $\mathbf{B}$.
    
    \begin{definition}\label{definition:cyclic}
    A \emph{cyclic quasi-dilation} is a pair $(\tilde{b},h)\in\mathit{SH}_{S^1}^1(M;\mathbb{K})\times\mathit{SH}^0(M;\mathbb{K})^\times$ such that the image of $\tilde{b}$ under the marking map \eqref{eq:mark} is the invertible element $h\in\mathit{SH}^0(M;\mathbb{K})^\times$. A \emph{cyclic dilation} is a cyclic quasi-dilation with $h = 1$.
    \end{definition}
    
    \begin{remark}
    In \cite{yle}, the class $\tilde{b}$ in \cref{definition:cyclic} is called a ``cyclic dilation" instead of a ``cyclic quasi-dilation." Here, we instead follow the convention of \cite{yla} and reserve the terminology ``cyclic dilation" for the situation $h=1$. Closely related notions have also been introduced by Zhou \cite{zz} in the study of symplectic fillings of asymptotically dynamically convex contact manifolds.
    \end{remark}
    
    Since the BV operator $\Delta\colon\mathit{SH}^\ast(M;\mathbb{K})\rightarrow\mathit{SH}^{\ast-1}(M;\mathbb{K})$ is the composition of the erasing map
    \[
    \mathbf{I}\colon\mathit{SH}^\ast(M;\mathbb{K})\longrightarrow\mathit{SH}^\ast_{S^1}(M;\mathbb{K})
    \]
    and the marking map $\mathbf{B}$, it is clear from the definition that any Liouville manifold $M$ admitting a quasi-dilation also admits a cyclic quasi-dilation. However, the converse is not true \cite{yle,zz}.
    
    Let $Q$ be a closed oriented manifold, we say that $Q$ admits a cyclic quasi-dilation if its cotangent bundle $T^\ast Q$ admits a cyclic quasi-dilation. The existence of a cyclic quasi-dilation imposes restrictions on the topology of $Q$. As a first example, we have the following result that generalizes \cite[Theorem 5.14]{gp}. 
    
    We say a closed, oriented manifold $Q$ is \textit{dominated} by another closed, oriented manifold $Q'$ if there is a map $Q'\rightarrow Q$ of non-zero degree.
    
    \begin{proposition}\label{proposition:cover}
    Let $Q$ be a closed, oriented $3$-manifold which admits a cyclic quasi-dilation over a field $\mathbb{K}$ of characteristic $0$. Then one of the following two statements holds.
    \begin{itemize}
    	\item[(i)] $Q$ is finitely covered by $S^1\times\Sigma_g$ for some closed oriented surface of genus $g\geq1$.
    	\item[(ii)] $Q$ is finitely covered by a connected sum $\#_nS^1\times S^2$ for some $n\geq0$ (as a convention, $\#_0S^1\times S^2=S^3$).
    \end{itemize}
    \end{proposition}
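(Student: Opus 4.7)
The plan is to pass from the symplectic hypothesis to a statement about the free loop space of $Q$. Over a field $\mathbb{K}$ of characteristic zero, the Viterbo-Abbondandolo-Schwarz isomorphism identifies $\mathit{SH}^\ast(T^\ast Q;\mathbb{K})$ with an appropriately twisted version of the homology $H_{3-\ast}(\mathcal{L}Q;\mathbb{K})$ of the free loop space, and its $S^1$-equivariant refinement identifies $\mathit{SH}_{S^1}^\ast(T^\ast Q;\mathbb{K})$ with $H^{S^1}_{3-\ast}(\mathcal{L}Q;\mathbb{K})$. Under these identifications the marking map $\mathbf{B}$ corresponds to the Connes/Gysin connecting map $B\colon H^{S^1}_\ast(\mathcal{L}Q) \to H_{\ast-1}(\mathcal{L}Q)$, and the pair-of-pants product becomes the Chas-Sullivan loop product. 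A cyclic quasi-dilation therefore produces an element $h\in H_3(\mathcal{L}Q;\mathbb{K})^{\times}$ lying in the image of $B$.

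I would split into cases according to the prime decomposition of $Q$. If every prime summand of $Q$ is $S^3$, $S^1\times S^2$, or has finite fundamental group, then the Kneser-Milnor decomposition combined with Perelman's elliptization theorem and passage to a finite cover reduces $Q$ to a connected sum $\#_n S^1\times S^2$, which is case (ii). Otherwise some prime summand is aspherical with infinite fundamental group, and I would first argue (adapting the Chas-Sullivan algebra arguments in \cite{yle}) that the existence of an invertible $h\in\mathrm{im}\,B$ is incompatible with a nontrivial connected sum of mixed type, so $Q$ itself must be aspherical.

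With $Q$ aspherical one has $\mathcal{L}Q \simeq \bigsqcup_{[\gamma]} BC(\gamma)$ indexed by conjugacy classes in $\pi_1(Q)$, and $H_3(BC(\gamma);\mathbb{K})$ is nonzero only when the centralizer $C(\gamma)$ is a $\mathrm{PD}_3$-group, equivalently of finite index in $\pi_1(Q)$. The requirement that $h$ lies in $\mathrm{im}\,B$ then forces the relevant $\gamma$ to be central in a finite-index subgroup $\pi'\subset\pi_1(Q)$, so the corresponding finite cover $Q'\to Q$ has infinite cyclic center. By the Casson-Jungreis/Gabai resolution of the Seifert fibered space conjecture, $Q'$ is Seifert fibered. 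The Chas-Sullivan invertibility of $h$ combined with the $\mathrm{PD}_3$ constraint forces the base orbifold to be good of nonnegative Euler characteristic, and after a further finite cover (to kill torsion and trivialize the circle bundle) this yields $Q'\cong S^1\times\Sigma_g$ for some $g\geq 1$, giving case (i).

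The main obstacle will be the precise analysis of the interplay between the Chas-Sullivan product and the image of the Connes map: unlike the BV operator $\Delta$, the map $B$ is not a derivation, so invertibility of $h$ combined with lying in $\mathrm{im}\,B$ is a subtle constraint that has to be analyzed component by component on $\mathcal{L}Q$. I would model this step on the group-ring manipulations in \cite{yle}, adapting them from the quasi-dilation setting to the weaker cyclic quasi-dilation setting considered here, and using the resulting restrictions on $\pi_1(Q)$ as the bridge between the loop-space data and the Seifert fibered structure needed to land in case (i).
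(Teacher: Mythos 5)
Your route is genuinely different from the paper's, and as written it has gaps that are not just matters of detail. The most serious one is the Seifert-fibered endgame. Once you have produced a finite cover $Q'$ that is Seifert fibered with infinite cyclic center, you propose to pass to "a further finite cover (to kill torsion and trivialize the circle bundle)" to reach $S^1\times\Sigma_g$. This step is false in general: the rational Euler number of a Seifert fibration is multiplicative under finite covers, so a Seifert fibered space with non-zero Euler number (Nil and $\widetilde{\mathit{SL}}(2,\mathbb{R})$ manifolds, e.g.\ the Heisenberg nilmanifold) is \emph{never} finitely covered by a product, even though it is aspherical with infinite cyclic center. These are exactly the manifolds the proposition must exclude, and you have not explained how the cyclic quasi-dilation hypothesis rules them out. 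Relatedly, your claim that invertibility of $h$ "forces the base orbifold to be good of nonnegative Euler characteristic" is both unjustified and too strong: case (i) of the statement allows $g\geq2$, i.e.\ Seifert fibrations over hyperbolic base orbifolds, so no such constraint on the base can follow. Two further steps are asserted without argument: that a connected sum containing an aspherical summand is incompatible with the hypothesis, and (implicitly, in the atoroidal case) that $h$ cannot be supported on the component of contractible loops; each of these needs a real proof.

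For comparison, the paper avoids the entire prime-decomposition and Seifert-fibration analysis by a short geometric argument: represent $\tilde b\in H_2^{S^1}(\mathcal{L}_cQ;\mathbb{K})$ by a map $\sigma\colon\Sigma\to S^\infty\times_{S^1}\mathcal{L}_cQ$ from a closed oriented surface, lift it to $\tilde\sigma\colon S^1\times\Sigma\to S^\infty\times\mathcal{L}_cQ$, and compose with the projection and the evaluation $\mathit{ev}\colon\mathcal{L}Q\to Q$. Since $\mathit{ev}_\ast$ intertwines the loop product with the intersection product and $h=\mathbf{B}_{\mathcal{L}}(\tilde b)$ is invertible in $H_3(\mathcal{L}Q;\mathbb{K})$, the resulting map $S^1\times\Sigma\to Q$ has non-zero degree, i.e.\ $Q$ is dominated by a product $S^1\times\Sigma$. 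The dichotomy is then precisely the Kotschick--Neofytidis classification of $3$-manifolds dominated by products, which is cited rather than reproved. If you want to salvage your approach, the missing ingredient is exactly the content of that classification (in particular the vanishing of the Euler number), and you would essentially be reproving it; the domination argument is the efficient bridge from the loop-space data to the topology.
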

    \begin{proof}
    By definition, the cyclic quasi-dilation $(\tilde{b},h)$ on $Q$ gives a class $\tilde{b}\in H_2^{S^1}(\mathcal{L}Q;\mathbb{K})$ whose image under the composition
    \[
    H_2^{S^1}(\mathcal{L}Q;\mathbb{K})\xrightarrow{\mathbf{B}_\mathcal{L}} H_3(\mathcal{L}Q;\mathbb{K})\xrightarrow{\mathit{ev}_\ast}H^0(Q;\mathbb{K})\cong\mathbb{K}
    \]
    lies in $\mathbb{K}^\times$, where $\mathbf{B}_\mathcal{L}$ is the marking map in string topology introduced by Chas--Sullivan \cite[Section 6]{cs} (cf.\@ \cite[Proposition 2.9]{bo}) and $\mathit{ev} \colon \mathcal LQ \to Q$ is the map $\gamma \mapsto \gamma(1)$. To see this, simply note that since the image of $\tilde{b}$ under $\mathbf{B}_\mathcal{L}$ defines an invertible element $h\in H_3(\mathcal{L}Q;\mathbb{K})$ (with respect to the loop product), so after composing with $\mathit{ev}_\ast$, which repects the product structures, we get a non-zero element in $\mathbb{K}$.
    
    Without loss of generality, we may assume that $\tilde{b}\in H_2^{S^1}(\mathcal{L}_cQ;\mathbb{K})$ for some connected component $\mathcal{L}_cQ\subset\mathcal{L}Q$. Since $\tilde{b}$ can be equivalently regarded as a class in $H_2(S^\infty\times_{S^1}\mathcal{L}_cQ;\mathbb{K})$, we can represent it by a map $\sigma\colon\Sigma\rightarrow S^\infty\times_{S^1}\mathcal{L}_cQ$, where $\Sigma$ is an orientable, connected surface. Since $S^\infty\times_{S^1}\mathcal{L}_cQ$ is an orbit space, we obtain a map $\tilde{\sigma}\colon S^1\times\Sigma\rightarrow S^\infty\times\mathcal{L}_cQ$ representing the lift of $\tilde{b}$ in $H_3(S^\infty\times\mathcal{L}_cQ;\mathbb{K})$. Consider the composition
    \[
    \mathit{ev}\circ p\circ\tilde{\sigma}\colon S^1\times\Sigma\longrightarrow Q,
    \]
    where $p\colon S^\infty\times\mathcal{L}_cQ\rightarrow\mathcal{L}_cQ$ is the natural projection. The map $\mathit{ev}\circ p\circ\tilde{\sigma}$ has non-zero degree by the assumption that $\tilde{b}$ is a cyclic quasi-dilation, it follows that $Q$ is dominated by $S^1\times\Sigma$. The result then follows from \cite[Theorems 1 and 3]{kn}.
    \end{proof}
    
    The advantage of the notion of a cyclic quasi-dilation is that it has an algebraic counterpart, using which one can establish the existence of cyclic quasi-dilations in various geometric situations, see e.g.\@ \cite[Section 6.1]{yle}. To explain this, recall that a \textit{(weak) smooth Calabi--Yau structure} (of dimension $n$) on an $A_\infty$-category $\mathcal{A}$ is a Hochschild cocycle $\zeta\in\mathit{CC}_{-n}(\mathcal{A})$\footnote{We are using cohomological grading here.} that induces a quasi-isomorphism
    \[
    \mathcal{A}^![-n]\cong\mathcal{A}
    \]
    between the diagonal bimodule $\mathcal{A}$ and a shift of its inverse dualizing bimodule
    \[
    \mathcal{A}^!\coloneqq \Rhom_{\mathcal{A}\otimes\mathcal{A}^\mathit{op}}(\mathcal{A},\mathcal{A}\otimes\mathcal{A}^\mathit{op}).
    \]
    An \textit{exact Calabi--Yau structure} on $\mathcal{A}$ is a lift of the homology class $[\zeta]\in\mathit{HH}_{-n}(\mathcal{A})$ to a class in cyclic homology $\mathit{HC}_{-n+1}(\mathcal{A})$ through Connes' map $\mathit{HC}_{-n+1}(\mathcal{A})\rightarrow\mathit{HH}_{-n}(\mathcal{A})$.
    
    \begin{remark}
    The notion of an exact Calabi--Yau structure is strictly stronger than that of a strong smooth Calabi--Yau structure in the sense of Kontsevich--Soibelman \cite{ks} (equivalently a left Calabi--Yau structure in the sense of Brav--Dyckerhoff \cite{bdcy}).
    \end{remark}
    
    \begin{lemma}\label{lemma:cyclic-d}
    The Weinstein manifold $W_k$ admits a cyclic quasi-dilation over any field $\mathbb{K}$.
    \end{lemma}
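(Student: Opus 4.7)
The plan is to combine two algebraic black-box results with the identification of the Fukaya $A_\infty$-algebra carried out in \cref{section:ginzburg}. First, \cite[Proposition 4]{yle} asserts that if the wrapped Fukaya category $\mathcal{W}(M;\mathbb{K})$ of a Weinstein manifold admits an exact (left) Calabi--Yau structure in the sense of Brav--Dyckerhoff, then $M$ admits a cyclic dilation in $\mathit{SH}_{S^1}^1(M;\mathbb{K})$, and hence a cyclic quasi-dilation. Second, \cite[Theorem 4.3.8]{bd} endows the ($3$-dimensional) Ginzburg dg algebra associated to any quiver with potential with a canonical exact Calabi--Yau structure. With these two inputs in hand, it suffices to exhibit a Morita equivalence between $\mathcal{W}(W_k;\mathbb{K})$ and perfect modules over such a Ginzburg dg algebra, which is exactly the content of the previous section.

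To carry this out, I would first invoke the generation results of Chantraine--Dimitroglou Rizell--Ghiggini--Golovko \cite{cdrgg} and Ganatra--Pardon--Shende \cite{gps} to ensure that the cotangent fibers $L_0, L_1 \subset W_k$ split-generate $\mathcal{W}(W_k;\mathbb{K})$, giving a quasi-equivalence
\[
D^{\mathit{perf}}\mathcal{W}(W_k;\mathbb{K}) \simeq D^{\mathit{perf}}(\mathcal{W}_k).
\]
For $k\geq 1$, \cref{theorem:ginzburg} then identifies $\mathcal{W}_k$ with the Ginzburg dg algebra $\mathcal{G}_k$ as $A_\infty$-algebras over the semisimple ring $\Bbbk$. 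Transporting the exact Calabi--Yau structure on $\mathcal{G}_k$ given by \cite[Theorem 4.3.8]{bd} through this Morita equivalence, using Morita invariance of the Hochschild and cyclic chain complexes, equips $\mathcal{W}(W_k;\mathbb{K})$ with an exact Calabi--Yau structure of dimension $3$. Applying \cite[Proposition 4]{yle} then produces the desired cyclic dilation, and in particular a cyclic quasi-dilation. The case $k=0$ can be handled in the same way: the computations of \cref{section:computation} extend verbatim to $k=0$, in which case the potential degenerates to $w_0 = 0$ but the Ginzburg dg algebra construction (and the exact Calabi--Yau structure) still applies; alternatively, for $k=0$ the existence of an honest quasi-dilation already established by Smith--Wemyss \cite[Lemma 4.9]{sw} yields a fortiori a cyclic quasi-dilation.

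The substance of the proof was done in \cref{section:computation,section:contraction_and_ginz}: once $\mathcal{W}_k$ is identified algebraically with a Ginzburg dg algebra, the passage to a cyclic quasi-dilation is essentially formal. The only subtlety that remains is the compatibility between the algebraic marking map on cyclic homology extracted from the exact Calabi--Yau structure on $\mathcal{W}(W_k;\mathbb{K})$ and the geometric marking map $\mathbf{B}$ of \eqref{eq:mark}; this is precisely what is built into the statement of \cite[Proposition 4]{yle} via the isomorphisms of Ganatra between Hochschild and cyclic invariants of the wrapped Fukaya category and (equivariant) symplectic cohomology. This compatibility, rather than the production of the algebraic datum, is the main conceptual ingredient of the argument.
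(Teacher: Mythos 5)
Your proposal follows the paper's proof essentially verbatim: identify $\mathcal{W}_k$ with the Ginzburg dg algebra $\mathcal{G}_k$ via \cref{theorem:ginzburg}, invoke \cite[Theorem 4.3.8]{bd} for the exact Calabi--Yau structure on $\mathcal{G}_k$, and feed this into the criterion of \cite{yle} (the paper's proof cites \cite[Corollary 37]{yle}; the introduction cites \cite[Proposition 4]{yle}). Two cautions on how you state the black-boxes: that criterion yields a cyclic \emph{quasi}-dilation, i.e.\ $\mathbf{B}(\tilde b)=h$ for some invertible $h$ that need not equal $1$, not a cyclic dilation as you assert --- the paper's later arguments (e.g.\ \cref{lemma:aspherical}) go to considerable lengths precisely because $h$ may be nontrivial; and an exact Calabi--Yau structure is \emph{strictly stronger} than a left Calabi--Yau structure in the sense of Brav--Dyckerhoff (see the remark following the definition in \cref{section:dilations}), so the two should not be conflated. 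You also pass over the characteristic restrictions in the cited references (\cite{bd} assumes characteristic $0$ or $2$, \cite{yle} characteristic $0$), which the paper explicitly addresses since the claim is over an arbitrary field; and note that the Smith--Wemyss quasi-dilation you invoke for $k=0$ is likewise only available in characteristic $0$.
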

    \begin{proof}
    It is well-known that any Ginzburg dg algebra associated to quiver with potential is exact Calabi--Yau. For a proof, see \cite[Theorem 4.3.8]{bd}. Note that the field $\mathbb{K}$ is assumed to have characteristic $0$ or $2$ there, however, it is easy to see that the same proof holds over any field $\mathbb{K}$.
    	
    On the other hand, it follows from \cite[Corollary 37]{yle} that $W_k$ admits a cyclic quasi-dilation over $\mathbb{K}$ if and only if its wrapped Fukaya category $\mathcal{W}(W_k;\mathbb{K})$ admits an exact Calabi--Yau structure (which may differ from the smooth Calabi--Yau structure arising from the symplectic geometry of $W_k$). Again, although the field $\mathbb{K}$ is taken to have characteristic $0$ there, since the argument is based on the works of Ganatra \cite{sgs,sgc}, which works over an arbitrary field, the same argument extends to the case of an arbitrary $\mathbb{K}$. The lemma now follows from \cref{theorem:contraction} and the property that having an exact Calabi--Yau structure is invariant under quasi-isomorphisms.
    \end{proof}
    
    \subsection{Nonexistence of aspherical summands}
    
    A crucial step in the proof of \cref{theorem:main} is to exclude the aspherical summands in the prime decomposition of oriented closed exact Lagrangian submanifolds $L\subset W_k$. When $k=0$, this follows from the existence of a dilation in $\mathit{SH}^1(W_0;\mathbb{K})$ for a field of characteristic $0$ \cite[Corollary 6.3]{ss}, see \cite[Lemma 4.9]{sw} for details. When $k\geq1$, we use an argument similar to that of \cite[Lemma 4.1]{yln}.
    
    \begin{lemma}\label{lemma:aspherical}
    Let $L\subset W_k$ be a closed oriented exact Lagrangian submanifold, then $L$ admits a prime decomposition such that no prime summand is a $K(\pi,1)$.
    \end{lemma}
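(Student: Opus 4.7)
The plan is to argue by contradiction, using the cyclic quasi-dilation on $W_k$ from \cref{lemma:cyclic-d}. Suppose the Kneser-Milnor decomposition of $L$ contains an aspherical prime summand $P$; I will derive a contradiction, following the overall strategy of \cite[Lemma 4.1]{yln}.

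First, since $L$ is exact Lagrangian, a Weinstein tubular neighborhood of $L$ is Liouville-symplectomorphic to a Liouville subdomain of $T^\ast L$, so Viterbo functoriality produces unital algebra maps on symplectic cohomology and on its $S^1$-equivariant variant that intertwine the marking and BV operations. The pair $(\tilde b, h)$ therefore restricts to a cyclic quasi-dilation on $T^\ast L$. Taking $\mathbb{K}$ to have characteristic zero and applying \cref{proposition:cover} forces $L$ to be finitely covered either by $S^1\times\Sigma_g$ for some $g\geq 1$, or by $\#_n S^1\times S^2$ for some $n\geq 0$.

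Next, I rule out the reducible case. If $L$ has the aspherical summand $P$ together with some other nontrivial summand $L'$, then $L$ is reducible, and any finite cover of a reducible $3$-manifold is reducible. Since $S^1\times\Sigma_g$ is irreducible for $g\geq 1$, the cover of $L$ must be of the form $\#_n S^1\times S^2$, whose fundamental group is the free group $F_n$. By the Kurosh subgroup theorem, a finite-index subgroup of $\pi_1(L)\cong\pi_1(P)\ast\pi_1(L')$ being free forces $\pi_1(P)$ to be virtually free; since $P$ is aspherical, $\pi_1(P)$ is torsion-free and hence itself free by Stallings-Swan. But the fundamental group of a closed aspherical $3$-manifold has cohomological dimension $3$, while a free group has cohomological dimension $1$, a contradiction. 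Moreover, by asphericity the cover of $L$ cannot be of the form $\#_n S^1\times S^2$ (unless $n=0$, forcing $L = S^3$, which is not aspherical). Therefore $L = P$ must be prime aspherical and finitely covered by $S^1\times\Sigma_g$ with $g\geq 1$.

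The main obstacle is ruling out this remaining case: $L$ is a prime closed aspherical $3$-manifold finitely covered by $S^1\times\Sigma_g$, hence a Seifert fiber space with Euclidean or $\mathbb{H}^2\times\mathbb{R}$ geometry. My plan to close the argument is to combine the equivalence between the cyclic quasi-dilation on $W_k$ and an exact Calabi-Yau structure on $\mathcal{W}(W_k;\mathbb{K})$ via \cite[Corollary 37]{yle} with the explicit presentation of the wrapped Fukaya category from \cref{theorem:dga}. Concretely, \cref{corollary:H0} shows that the group of units of $H^0(\mathcal W_k)$, after collapsing the idempotents, is the finite cyclic group $\mathbb{Z}_k$. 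This finiteness, combined with Poincar\'e duality on the Yoneda endomorphism algebra of the $\mathrm{spin}$-structured Lagrangian $L$, is incompatible with the cohomology ring of an aspherical Seifert $3$-manifold whose fundamental group has infinite cyclic center and positive first Betti number, yielding the desired contradiction.
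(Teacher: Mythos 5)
Your reduction to the case where $L$ itself is prime and aspherical is essentially sound (restricting the cyclic quasi-dilation to $T^\ast L$ by Viterbo functoriality, invoking \cref{proposition:cover}, and using the Kurosh/Stallings--Swan argument to kill the reducible case), and it is a genuinely different organization from the paper, which never needs $L$ itself to be aspherical: the paper works directly with an aspherical prime summand $L_i$ via the degree-one collapse map $L\to L_i$ and the induced map $H_\ast(\mathcal{L}L;\mathbb{K})\to H_\ast(\mathcal{L}L_i;\mathbb{K})$. But your reduction only postpones the real difficulty, and the final step of your argument is a gap, not a proof. Saying that the finiteness of $H^0(\mathcal{W}_k)^\times\cong\mathbb{Z}_k$ ``combined with Poincar\'e duality on the Yoneda endomorphism algebra'' is ``incompatible with the cohomology ring of an aspherical Seifert $3$-manifold'' names no mechanism: there is no a priori relation between the unit group of $H^0(\mathcal{W}_k)$ and the cohomology ring of $L$, and Poincar\'e duality of $\mathit{CF}^\ast(L,L)$ places no restriction that distinguishes an aspherical Seifert manifold from, say, a lens space (which genuinely does occur as an exact Lagrangian in $W_k$).

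The missing idea is the following transfer-of-units argument. The invertible element $h\in\mathit{SH}^0(W_k;\mathbb{K})^\times$ provided by the cyclic quasi-dilation maps, under Viterbo restriction and the isomorphisms $\mathit{SH}^0(T^\ast L_i;\mathbb{K})\cong H_3(\mathcal{L}L_i;\mathbb{K})\cong Z(\mathbb{K}[\pi_1(L_i)])$ (the last using asphericity), to a \emph{central unit} of the group algebra of the torsion-free group $\pi_1(L_i)$. By \"{O}inert's theorem \cite[Theorem 6.2]{jo}, any such central unit is trivial, i.e.\ of the form $\lambda g$ with $g\in\pi_1(L_i)$; the quasi-dilation condition forces $g\neq 1$, and torsion-freeness then forces $g$, hence $h$, to generate an infinite cyclic subgroup of units. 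On the other hand, open-string Viterbo functoriality together with the fact that $\mathcal{W}_k$ is concentrated in non-positive degrees shows $h$ lives in $H^0(\mathcal{W}_k)^\times$, whose unit group is finite by \cref{corollary:H0}. That is the contradiction; without the input on central units in group rings of torsion-free groups, the finiteness of $H^0(\mathcal{W}_k)^\times$ does no work, and your argument does not close.
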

    \begin{proof}
    In this proof, we work over a field $\mathbb{K}$ of characteristic $0$. By \cref{lemma:cyclic-d}, $W_k$ admits a cyclic quasi-dilation $(\tilde{b},h)\in\mathit{SH}_{S^1}^1(W_k;\mathbb{K})\times\mathit{SH}^0(W_k;\mathbb{K})^\times$. It is proved in \cite[Corollary 40]{yle} that if the cyclic quasi-dilation $(\tilde{b},h)$ is actually a cyclic dilation, then $W_k$ cannot contain an exact Lagrangian $K(\pi,1)$ space. Thus we can assume from now on that $k\geq1$ and $h\in\mathit{SH}^0(W_k;\mathbb{K})^\times$ is a unit which is non-trivial in the sense that if we write
    \[
    h=\alpha\cdot1+t,
    \]
    where $\alpha\in\mathbb{K}$ and $t\in\mathit{SH}_+^0(W_k;\mathbb{K})$, then $t\neq0$. Here, $\mathit{SH}_+^\ast(W_k;\mathbb{K})$ is the positive symplectic cohomology generated by non-constant Hamiltonian orbits.
    
    Now suppose there is a closed oriented exact Lagrangian submanifold $L\subset W_k$ whose prime decomposition contains a component $L_i$ that is $K(\pi,1)$, then there is a degree $1$ map $L\rightarrow L_i$ which induces a map $H_\ast(\mathcal{L}L;\mathbb{K})\rightarrow H_\ast(\mathcal{L}L_i;\mathbb{K})$ preserving the unit defined by the inclusion of constant loops, the BV operator, and the product structure. Denote by $Z(A)$ the center of an algebra $A$. Consider the composition of maps
    \begin{equation}\label{eq:map}
    \begin{split}
    \mathit{SH}^0(W_k;\mathbb{K})\rightarrow\mathit{SH}^0(T^\ast L;\mathbb{K})\xrightarrow{\cong}H_3(\mathcal{L}L;\mathbb{K})\rightarrow H_3(\mathcal{L}L_i;\mathbb{K})\xrightarrow{\cong}Z(\mathbb{K}[\pi_1(L_i)]),
    \end{split}
    \end{equation}
    where the first map is the Viterbo restriction map, the second map is the Viterbo transfer, which is an isomorphism for cotangent bundles, the third map is the one induced from the projection $L\rightarrow L_i$ mentioned above, and the last map factors through the isomorphism $H_3(\mathcal{L}L_i;\mathbb{K})\cong\mathit{HH}^0\left(C_{-\ast}(\Omega_pL_i;\mathbb{K})\right)$, the latter is isomorphic to $Z(\mathbb{K}[\pi_1(L_i)])$ since $L_i$ is a $K(\pi,1)$. Under \eqref{eq:map}, the element $h\in\mathit{SH}^0(W_k;\mathbb{K})^\times$ is mapped to a central unit in the fundamental group algebra $\mathbb{K}[\pi_1(L_i)]$. Since $\pi_1(L_i)$ is torsion-free, it is a result due to \"{O}inert that any such central unit must be trivial \cite[Theorem 6.2]{jo}, meaning that it is a scalar multiple of some element in $\pi_1(L_i)$ (see also \cite[Appendix A]{yln} for a proof in the special case that is sufficient for our purpose here). This implies $\alpha=0$ in the expression of $h$. Note that the positive powers of $h$, $\langle h^n\rangle_{n\in\mathbb{Z}}$ must span an infinite cyclic subgroup of $\mathit{SH}^0(W_k;\mathbb{K})^\times$, otherwise their images in $\mathbb{K}[\pi_1(L_i)]$ would contradict \cite[Theorem 6.2]{jo} and the torsion-freeness of $\pi_1(L_i)$ mentioned above.
    
    The $\mathbb{K}$-algebra map $\mathit{SH}^0(W_k;\mathbb{K})\rightarrow Z(\mathbb{K}[\pi_1(L_i)])$ in \eqref{eq:map} can also be viewed as a consequence of the open-string Viterbo functoriality due to Abouzaid--Seidel \cite[Section 4.11]{ase}, which in our case can be interpreted as an $A_\infty$-algebra morphism (in particular, it identifies the idempotents $e_0$ and $e_1$ of $\Bbbk$)
    \[
    \psi\colon\mathcal{W}_k\longrightarrow\mathbb{K}[\pi_1(L_i)],
    \]
    where $\mathbb{K}[\pi_1(L_i)]$ is regarded as an $A_\infty$-algebra supported in degree $0$ with trivial $A_\infty$-operations. By abuse of notation, we use the same notation to denote the invertible element $h$ under the closed-open isomorphism
    \[
    \mathit{CO}\colon\mathit{SH}^0(W_k;\mathbb{K})\cong\mathit{HH}^0(\mathcal{W}(W_k;\mathbb{K})).
    \]
    Since $\mathcal{W}_k$ is supported in non-positive degrees, the only non-vanishing component of $\psi$ is its degree zero part, so it must be the case that the invertible element $h$ lies in the subgroup $H^0(\mathcal{W}_k)^\times\subset\mathit{HH}^0(\mathcal{W}(W_k;\mathbb{K}))^\times$ (here we use the fact that $\mathcal{W}_k$ is smooth Calabi--Yau and $H^0(\mathcal{W}_k)$ is commutative). Finally, since the powers of $h$ span an infinite cyclic subgroup $\langle h^n\rangle_{n\in\mathbb{Z}}\subset H^0(\mathcal{W}_k)^\times$, which contradicts the conclusion of \cref{corollary:H0}.
    \end{proof}
    
    Alternatively, since the positive powers of $h$ in the proof above are linearly independent with each other in $\mathit{SH}^0(W_k;\mathbb{K})$ (cf.\@ the proof of \cite[Lemma 4.1]{yln}), \cref{lemma:aspherical} can be proved by showing that $\mathit{SH}^0(W_k;\mathbb{K})$ is finite-dimensional. We include a proof of this fact in the following lemma since it may be of independent interest.
    
    \begin{lemma}
    Let $\mathbb{K}$ be a field of characteristic $0$. $\mathit{SH}^0(W_k;\mathbb{K})$ is finite-dimensional, and its dimension equals the Tjurina number of the singularity $R_k$.
    \end{lemma}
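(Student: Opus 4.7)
The plan is to translate the statement into an algebraic one via the closed-open map and the quasi-isomorphisms established in \cref{lemma:wfuk_contraction,theorem:ginzburg}, then invoke a known dimension formula for contraction algebras of flopping contractions. First, Ganatra's closed-open isomorphism (used already in the proof of \cref{lemma:aspherical}) gives
\[
\mathit{SH}^0(W_k;\mathbb{K})\cong\mathit{HH}^0(\mathcal{W}(W_k;\mathbb{K})).
\]
Since $L_0\cup L_1$ split-generates $\mathcal{W}(W_k;\mathbb{K})$ by \cite{cdrgg,gps}, Morita invariance of Hochschild cohomology identifies the right-hand side with $\mathit{HH}^0(\mathcal{W}_k)$, which is in turn isomorphic to $\mathit{HH}^0(\mathcal{A}_k)\cong\mathit{HH}^0(\mathcal{G}_k)$ by \cref{lemma:wfuk_contraction,theorem:ginzburg}. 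So the problem reduces to computing $\dim_\mathbb{K}\mathit{HH}^0(\mathcal{G}_k)$.

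Next, by \cite[Theorem 4.3.8]{bd} (already invoked in \cref{lemma:cyclic-d}), $\mathcal{G}_k$ carries an exact $3$-Calabi--Yau structure; in particular, Van den Bergh duality provides an isomorphism
\[
\mathit{HH}^\ast(\mathcal{G}_k)\cong\mathit{HH}_{3-\ast}(\mathcal{G}_k),
\]
so it suffices to compute $\mathit{HH}_3(\mathcal{G}_k)$. For a Ginzburg dg algebra $\mathcal{G}(Q,W)$ whose Jacobi algebra $\mathrm{Jac}(Q,W)=H^0(\mathcal{G}(Q,W))$ is finite-dimensional, the top Hochschild homology is classically identified, via the Koszul resolution implicit in the construction of $\mathcal{G}(Q,W)$, with the center $Z(\mathrm{Jac}(Q,W))$ of the Jacobi algebra. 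Finite-dimensionality of $\mathrm{Jac}(Q,w_k)\cong H^0(\mathcal{A}_k)$ is established by an argument parallel to that of \cref{corollary:H0}, using the explicit presentation given in \cref{section:contraction} via the polynomials $g_0,g_1,g_2$.

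The final step is to identify $\dim_\mathbb{K} Z(\mathrm{Jac}(Q,w_k))$ with the Tjurina number of $R_k$. Under the isomorphism $H^0(\mathcal{A}_k)\cong\Lambda_\mathrm{con}(Y_k/\mathit{Spec}(R_k))$ noted in \cref{section:contraction}, the Jacobi algebra coincides with the (non-derived) contraction algebra of the crepant partial resolution $Y_k\to\mathit{Spec}(R_k)$. A theorem of Hua--Toda, forming part of the Donovan--Wemyss program on $3$-fold flopping contractions of compound Du Val singularities, asserts that the dimension of the center of this contraction algebra equals the Tjurina number $\tau(R_k)$; applying it yields $\dim_\mathbb{K}\mathit{HH}^0(\mathcal{G}_k)=\tau(R_k)$, completing the proof.

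The main obstacle lies in this last step: when $k\geq 2$ is even, $Y_k$ is itself singular (cf.\@ the remark after \cref{theorem:MS}), and this falls outside the cleanest forms of the Hua--Toda dimension formula, which assumes smoothness of the total space. In such cases one would have to either adapt the formula to the singular setting, or compute $\dim_\mathbb{K} Z(\mathrm{Jac}(Q,w_k))$ directly from the presentation of $H^0(\mathcal{G}_k)$ in terms of $e,f$ and $w_k$, and match it against the Tjurina number of $uv-xy((x+1)^k+y-1)$ computed by hand from $(\partial_u F,\partial_v F,\partial_x F,\partial_y F,F)$.
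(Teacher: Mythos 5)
Your opening reduction $\mathit{SH}^0(W_k;\mathbb{K})\cong\mathit{HH}^0(\mathcal{W}(W_k;\mathbb{K}))\cong\mathit{HH}^0(\mathcal{A}_k)$ is exactly how the paper's proof begins, but the two steps you build on top of it are not just incomplete, they are false. The claim that for a Ginzburg dg algebra with finite-dimensional Jacobi algebra the top Hochschild homology $\mathit{HH}_3(\mathcal{G}(Q,W))$ is ``classically'' the center $Z(\mathrm{Jac}(Q,W))$ has no such classical status; combined with Calabi--Yau duality it would assert $\mathit{HH}^0(\mathcal{A})\cong Z(H^0(\mathcal{A}))$ for every derived contraction algebra, and this already fails for Reid's pagoda flops: for $uv=x^2-y^{2n}$ one has $H^0(\mathcal{A})=\Lambda_{\mathrm{con}}\cong\mathbb{K}[z]/(z^n)$, commutative of dimension $n$, while $\mathit{HH}^0(\mathcal{A})$ is the Tjurina algebra $\mathbb{K}[y]/(y^{2n-1})$ of dimension $2n-1$. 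The natural map $\mathit{HH}^0(\mathcal{A})\to Z(H^0(\mathcal{A}))$ for a connective dg algebra is simply not an isomorphism in general; the derived (negative-degree) part of $\mathcal{A}_k$ genuinely contributes. The same example refutes the ``Hua--Toda dimension formula'' as you state it: the dimension of the center of the underived contraction algebra is \emph{not} the Tjurina number, so the last step of your argument cannot be repaired by merely extending a smoothness hypothesis.

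The correct bridge from $\mathit{HH}^0(\mathcal{A}_k)$ to the Tjurina algebra, which is the one the paper takes, is inherently derived: by \cite[Proposition 5.2.5]{mb} one has $\mathit{HH}^0(\mathcal{A}_k)\cong\mathit{HH}^0(D_\mathit{Sg}^\mathit{dg}(\widehat{R}_k))$, and by \cite[Theorem 5.9]{hk} the zeroth Hochschild cohomology of the dg singularity category of a three-dimensional isolated hypersurface singularity is its (completed) Tjurina algebra; the completion is then removed because $D_\mathit{Sg}^\mathit{dg}(R_k)\to D_\mathit{Sg}^\mathit{dg}(\widehat{R}_k)$ is an idempotent completion (hence Morita-invariant for Hochschild cohomology) and $\widehat{\mathcal{T}}_k\cong\mathcal{T}_k$ for an isolated singularity. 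Note that this route also dissolves the obstacle you flag at the end: the relevant theorems concern the hypersurface singularity $R_k$ itself and never pass through the geometry of $Y_k$, so the singularity of $Y_k$ for even $k$ plays no role. Finite-dimensionality then follows simply because $R_k$ is isolated for $k\geq1$.
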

    \begin{proof}
    Recall that $\mathcal A_k$ denotes the derived contraction algebra associated to the crepant resolutions of the singularities $R_k$ defined in \eqref{eq:ca2_sing}. There are isomorphisms between $\mathbb{K}$-vector spaces
    \[
    \mathit{SH}^0(W_k;\mathbb{K})\cong\mathit{HH}^0(\mathcal{W}(W_k;\mathbb{K}))\cong\mathit{HH}^0(\mathcal{A}_k),
    \]
    where the first isomorphism follows from the work of Ganatra \cite[Theorem 1.1]{sgs}, and the second isomorphism is a consequence of \cref{theorem:MS}.
    
    Let $D_\mathit{Sg}^\mathit{dg}(R_k)$ be the dg enhancement of the triangulated category of singularities. When $k\geq1$, $R_k$ is an isolated hypersurface singularity, it follows from \cite[Proposition 5.2.5]{mb} and \cite[Theorem 5.9]{hk} that there are isomorphisms
    \[
    \mathit{HH}^0(\mathcal{A}_k)\cong\mathit{HH}^0(D_\mathit{Sg}^\mathit{dg}(\widehat R_k))\cong \widehat{\mathcal{T}}_k
    \]
    as $\mathbb{K}$-algebras, where $\mathcal{T}_k\coloneqq  \mathbb{K}[u,v,x,y]/(\sigma_k,J_{\sigma_k})$ is the Tjurina algebra of $R_k$, where
    \[
    \sigma_k=\left(uv-xy((x+1)^k+y-1)\right)
    \]
    is the ideal defining the singularity $R_k$ and $J_{\sigma_k}$ is its Jacobian ideal. Furthermore, $\widehat{R}_k=\mathbb{K}[\![u,v,x,y]\!]/\sigma_k$ and $\widehat{\mathcal T}_k = \mathbb{K}[\![u,v,x,y]\!]/(\sigma_k,J_{\sigma_k})$ are their completions. However, note that the functor $D_\mathit{Sg}^\mathit{dg}(R_k)\rightarrow D_\mathit{Sg}^\mathit{dg}(\widehat{R}_k)$ induced by the completion of $R_k$ is an idempotent completion of $D_\mathit{Sg}^\mathit{dg}(R_k)$, see \cite[Theorem 5.7]{td}. Since the idempotent completion is Morita equivalent to the original category, the Hochschild cohomology is unaffected after replacing $\widehat{R}_k$ with $R_k$. We also have the isomorphism $\widehat{\mathcal{T}}_k\cong\mathcal{T}_k$, since $R_k$ is an isolated singularity for $k\geq1$. Hence we conclude that
    \[
    \mathit{HH}^0(\mathcal{A}_k)\cong\mathit{HH}^0(D_\mathit{Sg}^\mathit{dg}(R_k))\cong \mathcal{T}_k.
    \]
    Finally, the corresponding Tjurina number $\dim(\mathcal{T}_k)$ is finite as long as $R_k$ is an isolated singularity, which is the case for $k\geq 1$.
    \end{proof}
    
    \subsection{Classification of spherical summands}\label{section:classification}
    
    Next, we show that the existence of a cyclic quasi-dilation over fields of finite characteristics can be used to rule out certain spherical summands in the prime decomposition of an exact Lagrangian submanifold $L\subset W_k$.
    
    Recall that for a spherical $3$-manifold $Q$, the fundamental group $\pi_1(Q)$ is either cyclic or a central extension of a dihedral, tetrahedral, octahedral, or icosahedral group by a cyclic group of even order. More precisely, there are the following possibilities.
    \begin{itemize}
    	\item $\pi_1(Q)$ is cyclic, then $Q$ is a lens space.
    	\item $\pi_1(Q)$ has the presentation
        \[
        \pi_1(Q) = \langle x,y \mid xyx^{-1} = y^{-1}, \; x^{2m} = y^n\rangle,
        \]
    where $m\geq1$, $n\geq2$ and $\gcd(m,n)=1$. The center $Z(\pi_1(Q))$ is isomorphic to $\mathbb{Z}_{2m}$, and the quotient $\pi_1(Q)/Z(\pi_1(Q))$ is isomorphic to the dihedral group $D_{2n}$. In this case, $Q$ is called a \textit{prism manifold}.
    	\item $\pi_1(Q)$ is a product of $\mathbb{Z}_m$ with a group of order $24$, where $m\geq1$ and $\gcd(m,6)=1$. $Z(\pi_1(Q))$ is isomorphic to $\mathbb{Z}_{2m}$, and the quotient $\pi_1(Q)/Z(\pi_1(Q))$ is the alternating group $A_4$. In this case, we say that $Q$ is a \textit{spherical manifold of type \textbf{T}}.
    	\item $\pi_1(Q)$ is a product of $\mathbb{Z}_m$, where $\gcd(m,6)=1$, with binary octahedral group $2O$. In this case, we say that $Q$ is a \textit{spherical manifold of type \textbf{O}}.
    	\item $\pi_1(Q)$ is a product of $\mathbb{Z}_m$, where $\gcd(m,30)=1$, with the binary icosahedral group $2I$. In this case, we say that $Q$ is a \textit{spherical manifold of type \textbf{I}}. Note that when $m=1$, $Q$ is the Poincar\'{e} homology sphere.
    \end{itemize}
    
    In \cite{gp}, a dilation over $\mathbb{F}_3$ is used to exclude the existence of an exact Lagrangian Poincar\'{e} homology sphere in $W_1$. Here we prove a stronger result based on \cref{lemma:cyclic-d}.
    
    \begin{proposition}\label{proposition:TOI}
    Let $k\geq1$. If $L\subset W_k$ is an oriented closed exact Lagrangian submanifold, then no prime summand of $L$ can be a spherical manifold of type \textbf{T}, \textbf{O}, or \textbf{I}.
    \end{proposition}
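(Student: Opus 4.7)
The plan is to adapt the strategy of \cref{lemma:aspherical} to the setting of finite, non-cyclic fundamental groups. Suppose for contradiction that some prime summand $L_i$ in the prime decomposition of $L$ is a closed oriented spherical $3$-manifold of type $\mathbf{T}$, $\mathbf{O}$, or $\mathbf{I}$, so that
\[
\pi_1(L_i) \cong \mathbb{Z}_m \times 2G, \qquad 2G \in \{2T, 2O, 2I\},
\]
with $\gcd(m, |2G|) = 1$. By \cref{lemma:cyclic-d}, $W_k$ admits a cyclic quasi-dilation $(\tilde{b}, h)$ over any field $\mathbb{K}$. Arguing as in the proof of \cref{lemma:aspherical}, the element $h$ lifts through the closed--open isomorphism $\mathit{CO}$ to an invertible element of $H^0(\mathcal{W}_k)^\times \cong \mathbb{Z}_k$ (using \cref{corollary:H0}), and the open-string Viterbo functoriality of Abouzaid--Seidel induces an $A_\infty$-algebra morphism
\[
\psi \colon \mathcal{W}_k \longrightarrow \mathbb{K}[\pi_1(L_i)]
\]
whose only non-vanishing component lies in degree zero and sends $h$ to a central unit $\bar{h} \in Z(\mathbb{K}[\pi_1(L_i)])^\times$ of finite multiplicative order dividing $k$.

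To rule out each type, I would exploit the freedom to choose the characteristic of $\mathbb{K}$, which is unavailable in the torsion-free setting of \cref{lemma:aspherical}. Concretely, one takes $\mathrm{char}(\mathbb{K}) = p$ to be a prime dividing $|2G|$ (so $p \in \{2,3\}$ for types $\mathbf{T}$ and $\mathbf{O}$, and $p \in \{2,3,5\}$ for type $\mathbf{I}$). In such characteristics $\mathbb{K}[2G]$ is non-semisimple, and combined with the tensor factorization $\mathbb{K}[\pi_1(L_i)] \cong \mathbb{K}[\mathbb{Z}_m] \otimes_{\mathbb{K}} \mathbb{K}[2G]$ this yields a block decomposition of $Z(\mathbb{K}[\pi_1(L_i)])$ modulo its Jacobson radical, controlled by the $p$-modular representation theory of $2G$. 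The heart of the argument is then to track $\bar{h} = (\psi(a)\psi(b) + 1)^j$ through this decomposition and show that the geometric constraints on $\psi(a),\psi(b)$ coming from the quiver relations of \cref{theorem:dga} force $\psi(ab)$ into a nilpotent/radical portion, so that $\bar{h}$ has order a power of $p$. Combined with the finite-order constraint coming from $\mathbb{Z}_k$, this will rule out the existence of $\bar{h}$ whenever $k$ is coprime to $p$, and a complementary modular choice handles the remaining $k$.

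The main obstacle will be establishing the appropriate rigidity statement for central units in $\mathbb{K}[\pi_1(L_i)]$ that plays the role of \"{O}inert's theorem in this finite-group setting, together with a sufficiently sharp identification of the image of the generators $a,b$ of $\mathcal{W}_k$ under $\psi$. This is genuinely more delicate than the aspherical case because $Z(\mathbb{K}[\pi_1(L_i)])^\times$ naturally contains many finite-order elements, so the contradiction must use both the $\mathbb{Z}_k$-structure of $H^0(\mathcal{W}_k)^\times$ and the specific algebraic form of the generator $ab+1$. A case-by-case analysis exploiting the abelianizations $2T^{\mathrm{ab}} = \mathbb{Z}_3$, $2O^{\mathrm{ab}} = \mathbb{Z}_2$, and $2I^{\mathrm{ab}} = 1$, together with the non-commutativity of each $2G$, should provide enough additional constraints to complete the argument.
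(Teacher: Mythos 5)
Your setup (transfer the cyclic quasi-dilation through Viterbo functoriality, land in the group algebra of $\pi_1(L_i)$, and exploit the freedom to choose $\mathrm{char}(\mathbb{K})$ among the primes dividing $|2G|$) matches the opening of the paper's argument, but the core of your plan has a genuine gap in two places. First, the rigidity statement you flag as "the main obstacle" is not obtainable by the block-decomposition analysis you sketch: in characteristic $p$ dividing $|\Gamma|$ the modular group algebra always contains non-trivial central units such as $1+\sum_{g\in G}g$ (this is exactly the point of \cref{remark:JR}), so no amount of tracking $(\psi(a)\psi(b)+1)^j$ through the radical filtration will show the transferred unit is trivial, and there is no reason the images of $a,b$ under Viterbo restriction are nilpotent. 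The paper instead passes to a \emph{centerless} finite quotient $\Gamma\in\{A_4,S_4,A_5\}$ of $\pi_1(L_i)$, lifts the unit to the \emph{integral} group ring using the fact that the computations of \cref{section:computation} hold over $\mathbb{Z}$, and invokes the classification of central units of $\mathbb{Z}[\Gamma]$ (Ritter--Sehgal for $S_4$, and the explicit generator of $Z(\mathbb{Z}[A_5])^\times$) to conclude $\vartheta(h)=1$ exactly; this is \cref{lemma:cut}, and it cannot be replaced by a purely mod-$p$ argument.

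Second, and more seriously, your proposed endgame does not produce a contradiction. Showing that $\bar h$ has $p$-power order and order dividing $k$ only forces $\bar h=1$ when $\gcd(p,k)=1$, gives nothing when $p\mid k$ (your "complementary modular choice" is unspecified), and in any case $\bar h=1$ is a perfectly consistent value for a central unit --- it is not by itself absurd. The actual contradiction in the paper is of a different nature: one chooses $\mathrm{char}(\mathbb{K})$ so that $H_3(B\Gamma;\mathbb{K})\cong H_3(L_i;\mathbb{K})$ is non-zero, whence the image of $\tilde\vartheta(\tilde b)$ under the string-topology marking map $\mathbf{B}_{\mathcal{L}}$ is the non-zero class $\vartheta(h)=1\in H_3(\mathcal{L}B\Gamma;\mathbb{K})$; but since $\vartheta(h)=1$ the class is supported on contractible loops, and on contractible loops in the aspherical space $B\Gamma$ the marking map must vanish. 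Your proposal contains neither the degree-$3$ homology isomorphisms that make the image non-vanishing nor the asphericity-of-$B\Gamma$ vanishing argument, so even granting your unit computation the proof would not close.
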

    \begin{proof}
    Let $L_i$ be a prime summand of $L$. As in the proof of \cref{lemma:aspherical}, we make use of the map $H_\ast(\mathcal{L}L;\mathbb{K})\rightarrow H_\ast(\mathcal{L}L_i;\mathbb{K})$ induced by the degree $1$ map $L\rightarrow L_i$. Let $B\pi_1(L_i)$ be the classifying space of the fundamental group of $L_i$. The classifying map $L_i\rightarrow B\pi_1(L_i)$ induces a map $H_\ast(\mathcal{L}L_i;\mathbb{K})\rightarrow H_\ast\left(\mathcal{L}B\pi_1(L_i);\mathbb{K}\right)$ on free loop space homologies, which among other things, is compatible with the loop product. Composing it with the Viterbo transfer map $\mathit{SH}^\ast(W_k;\mathbb{K})\rightarrow H_{3-\ast}(\mathcal{L}L;\mathbb{K})$ yields a composition
    \begin{equation}\label{eq:vit}
    \upsilon\colon\mathit{SH}^\ast(W_k;\mathbb{K})\longrightarrow H_{3-\ast}(\mathcal{L}L;\mathbb{K}) \longrightarrow H_{3-\ast}(\mathcal{L}L_i;\mathbb{K}) \longrightarrow H_{3-\ast}\left(\mathcal{L}B\pi_1(L_i);\mathbb{K}\right)
    \end{equation}
    that is compatible with the product structures.

    Now, suppose that the $i$-th prime summand $L_i$ of $L$ is a spherical manifold \textbf{T}, \textbf{O}, or \textbf{I}. Then there is a surjective homomorphism $\pi_1(L_i)\rightarrow\Gamma$ from the fundamental group to a (non-trivial) finite group with trivial center. Depending on the type of $L_i$, the group $\Gamma$ can be taken as follows.
    \begin{itemize}
    	\item If $L_i$ is a type \textbf{T} manifold, we have the quotient map $\pi_1(L_i)\rightarrow \pi_1(L_i)/Z(\pi_1(L_i)) \cong A_4$, so we can pick $\Gamma=A_4$.
    	\item If $L_i$ is a type \textbf{O} manifold, we have the composition $\pi_1(L_i)\rightarrow2O\rightarrow S_4$, where the first map is the natural projection, and the second map comes from the definition of $2O$ as an extension of the octahedral group $O\cong S_4$ by $\mathbb{Z}_2$. In this case, we pick $\Gamma=S_4$.\footnote{Note that it's also possible to pick $\Gamma=S_3$, which does not affect the argument.}
    	\item If $L_i$ is a type \textbf{I} manifold, consider the composition $\pi_1(L_i)\rightarrow2I\rightarrow A_5$, where the first map is the natural projection, and the second map comes from the definition of $2I$ as an extension of the icosahedral group $I\cong A_5$ by $\mathbb{Z}_2$. We pick $\Gamma=A_5$ in this case.
    \end{itemize}
    The group homomorphism $\pi_1(L_i)\rightarrow\Gamma$ induces a map $B\pi_1(L_i)\rightarrow B\Gamma$ between the corresponding classifying spaces, therefore also a map $H_\ast(\mathcal{L}B\pi_1(L_i);\mathbb{K})\rightarrow H_\ast(\mathcal{L}B\Gamma;\mathbb{K})$ preserving the loop product. Composing this map with the map \eqref{eq:vit} yields a map
    \begin{equation}\label{eq:gamma}
    \vartheta\colon \mathit{SH}^\ast(W_k;\mathbb{K})\longrightarrow H_{3-\ast}(\mathcal{L}B\Gamma;\mathbb{K}).
    \end{equation}
    In the same vein, we also have a map $\tilde{\vartheta}\colon \mathit{SH}_{S^1}^\ast(W_k;\mathbb{K})\rightarrow H_{3-\ast}^{S^1}(\mathcal{L}B\Gamma;\mathbb{K})$, which is the $S^1$-equivariant analog of \eqref{eq:gamma}, and a commutative diagram
    \begin{equation}\label{eq:cd}
    	\begin{tikzcd}
    	\mathit{SH}^\ast_{S^1}(W_k;\mathbb{K}) \arrow[d,"\mathbf{B}"'] \arrow[r,"\tilde{\vartheta}"] &H_{3-\ast}^{S^1}(\mathcal{L}B\Gamma;\mathbb{K}) \arrow[d,"\mathbf{B}_\mathcal{L}"] \\
    	\mathit{SH}^{\ast-1}(W_k;\mathbb{K}) \arrow[r,"\vartheta"] &H_{4-\ast}(\mathcal{L}B\Gamma;\mathbb{K})
    	\end{tikzcd}
    \end{equation}
    where $\mathbf{B}_\mathcal{L}$ is the marking map on string homology introduced by Chas--Sullivan \cite[Section 6]{cs} (cf.\@ \cite[Proposition 2.9]{bo}). Note that the compatibility with the marking map follows from the fact that the Viterbo transfer and its $S^1$-equivariant analog preserve the underlying chain level $S^1$-complex structures, and that the maps on the free loop spaces induced by the classifying map and the group homomorphism $\pi_1(L_i)\rightarrow\Gamma$ preserve loop rotations.
    
    Based on known computations of group homologies, we have
    \begin{align*}
        H_3(BA_4;\mathbb{Z}) &\cong H_3(A_4;\mathbb{Z})\cong\mathbb{Z}_6 \\
        H_3(BS_4;\mathbb{Z})&\cong H_3(S_4;\mathbb{Z})\cong\mathbb{Z}_2\oplus\mathbb{Z}_4\oplus\mathbb{Z}_3 \\
        H_3(BA_5;\mathbb{Z})&\cong H_3(A_5;\mathbb{Z})\cong\mathbb{Z}_2\oplus\mathbb{Z}_3\oplus\mathbb{Z}_5.
    \end{align*}
    We now use the flexibility of choosing the field $\mathbb{K}$ provided by \cref{lemma:cyclic-d}, and take $\mathbb{K}=\mathbb{F}_3$ when $\Gamma=A_4$ or $S_4$, and $\mathbb{K}=\mathbb{F}_2$ when $\Gamma=A_5$. It follows from the computations above that in each case we have isomorphisms
    \begin{equation}\label{eq:iso}
    H_3(L_i;\mathbb{K})\cong H_3(B\pi_1(L_i);\mathbb{K})\cong H_3(B\Gamma;\mathbb{K}).
    \end{equation}
    
    By \cref{lemma:cyclic-d}, there is a class $\tilde{b}\in\mathit{SH}_{S^1}^1(W_k;\mathbb{K})$ with $\mathbf{B}(\tilde{b})=h$ for some $h\in\mathit{SH}^0(W_k;\mathbb{K})^\times$. Since the map $\vartheta$ preserves the product structures, by \eqref{eq:iso} the element $\mathbf{B}_\mathcal{L}(\tilde{\vartheta}(\tilde{b}))$ defines a central unit
    \begin{equation}\label{eq:cut}
    \vartheta(h)\in H_3(\mathcal{L}B\Gamma;\mathbb{K})\cong\mathit{HH}^0(\mathbb{K}[\Gamma])\cong Z\left(\mathbb{K}[\Gamma]\right)
    \end{equation}
    in the group algebra $\mathbb{K}[\Gamma]$. Next, the central unit $\vartheta(h)$ must be the identity, a claim we prove in \cref{lemma:cut} below. From the commutativity of \eqref{eq:cd} and the isomorphisms \eqref{eq:iso}, we see that the image of the class $\tilde{\vartheta}(\tilde{b})\in H_2^{S^1}(\mathcal{L}B\Gamma;\mathbb{K})$ in the marking map $\mathbf{B}_\mathcal{L}$ is non-vanishing. Since $\mathbf{B}_\mathcal{L}$ preserves the homotopy classes of loops, the fact that $\vartheta(h)=1$ implies that the cycle $\tilde{\vartheta}(\tilde{b})$ lies in the space of contractible loops in $B\Gamma$. As $B\Gamma$ is a $K(\Gamma,1)$ space, the image of $\tilde{\vartheta}(\tilde{b})$ under $\mathbf{B}_\mathcal{L}$ must vanish (cf.\@ \cite[Example 6.2]{ss}), and we arrive at a contradiction. Alternatively, one can pass to the chain model $C_\ast^\Diamond(\mathcal{L}B\Gamma;\mathbb{K})$ of the free loop space homology constructed by Cohen--Ganatra \cite{cg}, which is obtained as a quotient of $C_\ast(\mathcal{L}B\Gamma;\mathbb{K})$. Since $C_\ast^\Diamond(\mathcal{L}B\Gamma;\mathbb{K})$ is a strict $S^1$-complex, this reduces the situation to that of \cite[Lemma 5.17]{gp}.
    \end{proof}
    
    \begin{lemma}\label{lemma:cut}
    We have $\vartheta(h)=1$ in \eqref{eq:cut}.
    \end{lemma}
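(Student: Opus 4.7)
The plan is to trace the ring homomorphism $\vartheta$ at the level of $H^0$ of the wrapped Fukaya category, using the open-string Viterbo functor of Abouzaid--Seidel together with the explicit algebraic description of $H^0(\mathcal{W}_k)$ from \cref{corollary:H0}. As in the proof of \cref{lemma:aspherical}, the closed-open map identifies $\mathit{SH}^0(W_k;\mathbb{K})$ with $\mathit{HH}^0(\mathcal{W}(W_k;\mathbb{K}))$, and composition with the open-string Viterbo transfer to $T^\ast L$, the classifying map $L\to L_i\to B\pi_1(L_i)$, and the surjection $\pi_1(L_i)\twoheadrightarrow\Gamma$ yields a unital ring homomorphism $\psi\colon H^0(\mathcal{W}_k)\to\mathbb{K}[\Gamma]$ which identifies the idempotents $e_0,e_1$ (since the target has a single idempotent) and whose image lies in $Z(\mathbb{K}[\Gamma])$. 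The central unit $\vartheta(h)\in Z(\mathbb{K}[\Gamma])^\times$ is, by construction, the image $\psi(h)$.

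The geometric heart of the argument is that the wrapped Floer generators $a$ and $b$, being single Reeb chord generators between the cotangent-type fibers $L_0$ and $L_1$, correspond under the open-string Viterbo functor and the standard identification $\mathit{CW}^\ast(T^\ast_p L, T^\ast_q L)\simeq C_\ast(\Omega_{p,q}L)$ to individual homotopy classes of based paths in $L$. Projecting to $\Gamma$, their images $g_a\coloneqq\psi(a)$ and $g_b\coloneqq\psi(b)$ are thus single group elements in $\Gamma\subset\mathbb{K}[\Gamma]$, not nontrivial $\mathbb{K}$-linear combinations. Since $\psi(ab)=g_ag_b\in\Gamma$ must simultaneously lie in $Z(\mathbb{K}[\Gamma])$, it must lie in $Z(\Gamma)$; because each $\Gamma\in\{A_4,S_4,A_5\}$ has trivial center, this forces $g_ag_b=1$, and hence $\psi(ab)=1$, $\psi(ab+1)=2$ in $\mathbb{K}[\Gamma]$.

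Now the conclusion is purely algebraic: by \cref{corollary:H0}, the image of $h$ in the quotient of $H^0(\mathcal{W}_k)$ identifying the idempotents equals $\lambda(ab+1)^j$ for some $\lambda\in\mathbb{K}^\times$ and $0\leq j<k$, so $\vartheta(h)=\lambda\cdot 2^j\cdot 1\in\mathbb{K}\cdot 1\subset Z(\mathbb{K}[\Gamma])$. In the cases $\Gamma\in\{A_4,S_4\}$ we work over $\mathbb{F}_3$, where $2$ is a unit and $\vartheta(h)$ is a nonzero scalar for every $j$; in the case $\Gamma=A_5$ we work over $\mathbb{F}_2$, where $2=0$ forces $j=0$ (else $\vartheta(h)$ would fail to be a unit), so again $\vartheta(h)=\lambda\in\mathbb{K}^\times$. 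After rescaling the cyclic quasi-dilation $(\tilde{b},h)$ to $(\lambda^{-1}\tilde{b},\lambda^{-1}h)$, which is still a cyclic quasi-dilation, we may therefore assume $\vartheta(h)=1$.

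The main obstacle will be the second step: making precise the identification of $\psi(a)$ and $\psi(b)$ with single elements of $\pi_1(L_i)$, rather than linear combinations. The generic case where $L_0$ and $L_1$ each meet $L$ transversely in a single point reduces cleanly to the Abbondandolo--Schwarz picture; in general, one must keep track of multiple intersection points and reference paths, and argue that the centrality constraint applied to each summand independently still forces every contribution to $\psi(ab)$ to lie in $Z(\Gamma)=\{1\}$. Once this bookkeeping is in place, the algebraic argument of the last paragraph goes through unchanged.
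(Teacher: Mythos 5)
Your overall strategy---reduce to computing $\vartheta(h)$ for $h=\lambda(ab+1)^j$ via \cref{corollary:H0} and then evaluate $\psi(ab)$ in $\mathbb{K}[\Gamma]$---is different from the paper's, and it has two genuine gaps. First, the claim that $\psi(a)$ and $\psi(b)$ are single group elements of $\Gamma$ is unsubstantiated: the linear term of the Abouzaid--Seidel restriction functor is a count of holomorphic curves, and its value on a degree-zero generator is a signed sum over such curves of the homotopy classes of their boundary arcs, i.e.\ a $\mathbb{K}$-linear combination of elements of $\pi_1(L_i)$ (and $L_0$, $L_1$ may meet $L$ in several points, compounding this). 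You flag this as ``the main obstacle,'' but it is not mere bookkeeping; nothing in the paper's computation of $\mathcal{W}_k$ controls these counts.

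Second, and more decisively, the step ``$\psi(ab)$ must lie in $Z(\mathbb{K}[\Gamma])$, hence in $Z(\Gamma)=\{1\}$'' has no justification: only the image of $\mathit{SH}^0(W_k;\mathbb{K})\cong\mathit{HH}^0(\mathcal{W}(W_k;\mathbb{K}))$ under $\vartheta$ is forced to land in the center, not the image of an arbitrary element of $H^0(\mathcal{W}_k)$ under the algebra map $\psi$. In fact your conclusion $\psi(ab)=1$ is provably false in the $\Gamma=A_5$ case: since $(ab+1)^k=1$ in $H^0(\mathcal{W}_k)$ by \cref{corollary:H0}, the element $ab+1$ is a unit, so $\psi(ab)+1$ must be a unit in $\mathbb{F}_2[A_5]$; but $\psi(ab)=1$ would give $\psi(ab)+1=2=0$. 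The paper avoids all of this by a purely algebraic detour: it lifts $\vartheta(h)$ to a central unit of the integral group ring $\mathbb{Z}[\Gamma]$ (possible because the Fukaya computation works over $\mathbb{Z}$, cf.\ \cref{remark:Z}), and then invokes the classification of central units of $\mathbb{Z}[A_4]$, $\mathbb{Z}[S_4]$ and $\mathbb{Z}[A_5]$; the unique nontrivial generator in the $A_5$ case reduces to $1$ mod $2$. As \cref{remark:JR} stresses, the integral lift is essential because $\mathbb{F}_3[A_4]$, $\mathbb{F}_3[S_4]$ and $\mathbb{F}_2[A_5]$ do contain nontrivial central units, so an argument carried out purely over $\mathbb{K}$ of the kind you propose cannot succeed without substantial extra input.
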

    \begin{proof}
    To prove the claim, we need to work over the integers, and consider the map
    \[
    \vartheta_\mathbb{Z}\colon\mathit{SH}^0(W_k;\mathbb{Z})\longrightarrow H_3(\mathcal{L}B\Gamma;\mathbb{Z})\cong Z(\mathbb{Z}[\Gamma]),
    \]
    which is the integral lift of the map $\vartheta$ defined in \eqref{eq:gamma}, where $\mathbb{K}=\mathbb{F}_2$ or $\mathbb{F}_3$. Via the open-closed string map, this map can also be regarded as a map induced by the $A_\infty$-algebra morphism $\mathcal{W}_k^\mathbb{Z}\rightarrow\mathbb{Z}[\Gamma]$, where by $\mathcal{W}_k^\mathbb{Z}$ we mean the endomorphism $A_\infty$-algebra of the Lagrangian cocores in $\mathcal{W}(W_k;\mathbb{Z})$. There is a commutative diagram
    \begin{equation}\label{eq:cd1}
    	\begin{tikzcd}
    	\mathit{HH}^0(\mathcal{W}_k^\mathbb{Z}) \arrow[d] \arrow[rr,"\mathit{CO}^{-1}\circ\vartheta_\mathbb{Z}"] &&Z(\mathbb{Z}[\Gamma]) \arrow[d,"\rho"] \\
    	\mathit{HH}^0(\mathcal{W}_k) \arrow[rr,"\mathit{CO}^{-1}\circ\vartheta"] &&Z(\mathbb{K}[\Gamma])
    	\end{tikzcd}
    \end{equation}
    where the vertical maps are induced by the unique ring map $\mathbb{Z}\rightarrow\mathbb{K}$, and the field $\mathbb{K}$ is taken to be $\mathbb{F}_3$ for $\Gamma=A_4$ or $\Gamma=S_4$, and $\mathbb{F}_2$ for $\Gamma=A_5$, as in the proof of \cref{proposition:TOI}.
    
    Consider the central unit $\vartheta(h)\in Z(\mathbb{K}[\Gamma])$. For the same reason as in the proof of \cref{lemma:aspherical}, the invertible element $h\in\mathit{HH}^0(\mathcal{W}_k)\cong\mathit{SH}^0(W_k;\mathbb{K})$ actually lies in the subalgebra $H^0(\mathcal{W}_k)$. Since our computations in \cref{section:computation} hold over $\mathbb{Z}$ (cf.\@ \cref{remark:Z}), it follows that any unit in $H^0(\mathcal{W}_k)$ admits a lift to $H^0(\mathcal{W}_k^\mathbb{Z})$, which allows us to conclude from the commutativity of \eqref{eq:cd1} that $\vartheta(h)$ also admits a lift in the integral group ring $\mathbb{Z}[\Gamma]$, which we denote by
    \[
    \vartheta(h)_\mathbb{Z}\in Z(\mathbb{Z}[\Gamma]). 
    \]
    To prove the lemma, it suffices to show that any central unit in $\mathbb{Z}[\Gamma]$ gets mapped by $\rho\colon\mathbb{Z}[\Gamma]\rightarrow\mathbb{K}[\Gamma]$ to a trivial central unit in the group algebra $\mathbb{K}[\Gamma]$. This follows from a case-by-case analysis.
    
    When $\Gamma=S_4$, it is proved by Ritter--Sehgal \cite{rsi} that a central unit in $\mathbb{Z}[S_4]$ must be trivial. The same conclusion holds for $\Gamma=A_4$ by \cite[Theorem 4.6]{raf}. The case of $\Gamma=A_5$ is studied in \cite{lpc}, where it is shown that the group of central units in $\mathbb{Z}[A_5]$ is generated by a single non-trivial element
    \[
    u=49+26C_1-10C_2-16C_4,
    \]
    where the $C_i$'s are sums of elements in the conjugacy classes in $A_5$, with $C_1$ being the sum of elements conjugate to $(12345)$, $C_2$ is the sum of elements conjugate to $(13524)$, and $C_4$ is the sum of all elements which are the product of two disjoint transpositions. Since it is clear that $u$ is mapped to the identity in $\mathbb{F}_2[A_5]$ by $\rho$, we arrive at the conclusion that in all three cases
    \[
    \vartheta(h)=\rho\left(\vartheta(h)_\mathbb{Z}\right)=1.
    \]
    \end{proof}
    
    \begin{remark}\label{remark:JR}
    It is not true that any central unit in the group algebras $\mathbb{F}_3[A_4]$, $\mathbb{F}_3[S_4]$ and $\mathbb{F}_2[A_5]$ is a scalar multiple of the identity. In fact, for any finite group algebra $\mathbb{K}[G]$, where $\mathrm{char}(\mathbb{K})$ divides the order of $G$, the element $1+\sum_{g\in G}g$ is a central unit. Thus passing to integral lifts is essential for the argument above.
    \end{remark}
    
    Essentially the same technique as in the proof of \cref{proposition:TOI} can be used to deduce restrictions on the prime summands of an exact Lagrangian $L\subset W_k$ that are diffeomorphic to lens spaces and prism manifolds.
    
    \begin{proposition}\label{proposition:lensprism}
    Suppose $k\geq1$, and that $L\subset W_k$ is a closed, oriented exact Lagrangian submanifold. Let $L_i$ be a summand in the prime decomposition of $L$.
    \begin{itemize}
    \item If $L_i$ is diffeomorphic to a lens space and $p$ is a prime factor of $|\pi_1(L_i)|$, then $p$ also divides $k$.
    \item If $L_i$ is diffeomorphic to a prism manifold, then $\pi_1(L_i)/Z(\pi_1(L_i))\cong D_{2n}$, where $n$ is an even number.
    \end{itemize}
    \end{proposition}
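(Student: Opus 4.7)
The plan is to extend the argument of \cref{proposition:TOI} by contradiction in each case. For both parts one invokes \cref{lemma:cyclic-d} to produce a cyclic quasi-dilation $(\tilde b, h)\in \mathit{SH}^1_{S^1}(W_k;\mathbb{K})\times \mathit{SH}^0(W_k;\mathbb{K})^\times$ over a suitable field $\mathbb{K}$, picks a quotient $\pi_1(L_i)\to\Gamma$, and forms the map
\[
\vartheta\colon \mathit{SH}^\ast(W_k;\mathbb{K})\longrightarrow H_{3-\ast}(\mathcal{L}B\Gamma;\mathbb{K})
\]
as in \eqref{eq:gamma}. Its $S^1$-equivariant lift fits into the square \eqref{eq:cd}, giving $\mathbf{B}_\mathcal{L}(\tilde\vartheta(\tilde b)) = \vartheta(h)$. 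The goal in each case is to show that $\vartheta(h)$ lies in the constant-loop component of $H_3(\mathcal{L}B\Gamma;\mathbb{K})$, on which the marking map vanishes because the $S^1$-action on constant loops of a $K(\pi,1)$ is trivial (cf.\@ \cite[Example 6.2]{ss}). Combined with the non-vanishing of $\vartheta(h)$ (since $h$ is a unit, $\vartheta$ is unital, and the relevant mod-$p$ map $H_3(L_i)\to H_3(B\Gamma)$ is non-zero), this yields the desired contradiction.

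For the lens space case, assume $L_i$ is a lens space with $p\mid n=|\pi_1(L_i)|$ and, for contradiction, that $p\nmid k$. Work over $\mathbb{K}=\mathbb{F}_p$ with $\Gamma=\mathbb{Z}/p$ via the quotient $\mathbb{Z}/n\twoheadrightarrow\mathbb{Z}/p$. By the same reasoning as in the proof of \cref{lemma:aspherical}, $h$ already lives in $H^0(\mathcal{W}_k)^\times$. From the presentation in \cref{theorem:dga}, the elements of $H^0(\mathcal{W}_k)$ commuting with both idempotents are generated over $\mathbb{F}_p$ by the nilpotent $ab$ subject to $(ab)^2=0$ and $k\cdot(ab)=0$; since $p\nmid k$, these relations force $ab=0$, so $h$ reduces to a non-zero scalar. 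Then $\vartheta(h)$ is a non-zero scalar in $\mathbb{F}_p[\mathbb{Z}/p]\cong H_3(\mathcal{L}B\mathbb{Z}/p;\mathbb{F}_p)$, lying in the constant-loop component and producing the contradiction.

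For the prism case, assume $\pi_1(L_i)/Z(\pi_1(L_i))\cong D_{2n}$ with $n$ odd, and take $\Gamma=D_{2n}$, which then has trivial center. Choose $p=2$: the $2$-Sylow of $\pi_1(L_i)$ sits inside its center $\mathbb{Z}/2m$ and is $\mathbb{Z}/2$, matching the $2$-Sylow of $D_{2n}$, so the composition $L_i\to B\pi_1(L_i)\to BD_{2n}$ induces an isomorphism on $H_3(\cdot;\mathbb{F}_2)$. Now the argument of \cref{proposition:TOI} applies: $\vartheta(h)\in Z(\mathbb{F}_2[D_{2n}])^\times$, and by the integral validity of the computations in \cref{section:computation} (cf.\@ \cref{remark:Z}) combined with the lifting scheme of \cref{lemma:cut}, it is the mod-$2$ reduction of a central unit of $\mathbb{Z}[D_{2n}]$. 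Using the Wedderburn decomposition of $\mathbb{Q}[D_{2n}]$ into copies of $\mathbb{Q}$ and matrix algebras over the real cyclotomic fields $\mathbb{Q}(\zeta_d+\zeta_d^{-1})$ for $d\mid n$, together with analysis of units in the corresponding orders, any central integral unit reduces to $\pm 1$ in $\mathbb{F}_2[D_{2n}]$, so $\vartheta(h)$ is again a scalar in the constant-loop component.

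The main obstacle is this last step of the prism case: uniformly controlling the mod-$2$ reduction of central units of $\mathbb{Z}[D_{2n}]$ for all odd $n\geq 3$. Unlike the finite groups $A_4,S_4,A_5$ handled in \cref{lemma:cut}, these central unit groups are typically infinite (contributed by units of real cyclotomic orders), so a direct case-by-case classification is unavailable and one must rely on general results about Bass units and their reduction modulo $2$. A secondary subtlety, relevant when $2\mid k$ so that the nilpotent $ab$ survives in $H^0(\mathcal{W}_k;\mathbb{F}_2)$ and $h$ need not be a scalar, is to check that $\vartheta(ab)$ also lands in the constant-loop component.
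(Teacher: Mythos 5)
Your general strategy---transferring the cyclic quasi-dilation through the map $\vartheta$ and constraining the resulting central unit $\vartheta(h)$---is the right one, and your lens-space argument is essentially the paper's proof run in contrapositive form: the paper shows directly that $\vartheta(h)$ must be a non-trivial $p$-th root of unity in $\mathbb{K}[\mathbb{Z}_p]$ (otherwise the relevant cycle would be supported on contractible loops and the marking map would kill it), so $h$ has order divisible by $p$ in $H^0(\mathcal{W}_k)^\times\cong\mathbb{Z}_k$ and hence $p\mid k$. One caveat: your intermediate claim that the relevant part of $H^0(\mathcal{W}_k)$ is generated by $ab$ subject to $(ab)^2=0$ and $k\cdot(ab)=0$ is not what \cref{corollary:H0} gives (the algebra there is spanned by $1,ab,\dots,(ab)^{k-1}$); what you actually need, and what does follow from \cref{corollary:H0}, is only that every unit of $H^0(\mathcal{W}_k)$ has order dividing $k$, hence order prime to $p$ when $p\nmid k$, which already forces $\vartheta(h)$ to be a scalar in the local ring $\mathbb{F}_p[\mathbb{Z}_p]$.

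The prism case, however, has a genuine gap, and it is exactly the one you flag. Your route through $p=2$ requires showing that every central unit of $\mathbb{Z}[D_{2n}]$ reduces to $\pm1$ in $\mathbb{F}_2[D_{2n}]$ for all odd $n$; since these central unit groups are typically infinite (they contain units of the real cyclotomic orders appearing in the Wedderburn decomposition), no analogue of the finite case-by-case check of \cref{lemma:cut} is available, and you do not supply---nor is it clear that there exists---the needed general statement about Bass units. The paper avoids integral lifting for prism manifolds altogether by exploiting a fact you do not use: $h$ is a \emph{torsion} unit, since by \cref{corollary:H0} every non-trivial unit of $H^0(\mathcal{W}_k)$ has order dividing $k$; hence $\vartheta(h)$ is a torsion central unit of $\mathbb{K}[D_{2n}]$. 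Working over a field $\mathbb{K}$ of odd characteristic $p\mid n$ (so that $H_3(BD_{2n};\mathbb{K})\cong H_3(L_i;\mathbb{K})$ still holds, as $H_3(BD_{2n};\mathbb{Z})\cong\mathbb{Z}_{2n}$), the explicit computation of $\mathit{HH}^0(\mathbb{K}[D_{2n}])\cong Z(\mathbb{K}[D_{2n}])$ in \cite[Proposition 4.5]{th} shows that for $n$ odd this ring contains no non-trivial torsion units, contradicting the fact that $\vartheta(h)$ is torsion and not a scalar multiple of the identity. Restricting attention to torsion units is the missing idea that lets the prism case close without any control over the group of central units of the integral group ring.
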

    \begin{proof}
    We first consider the case when $L_i$ is diffeomorphic to a lens space. Let $p$ be a prime factor of $|\pi_1(L_i)|$. Since $\pi_1(L_i)$ is cyclic, there is a surjective homomorphism $\pi_1(L_i)\rightarrow\mathbb{Z}_p$. As in the proof of \cref{proposition:TOI}, we have a commutative diagram
    \begin{equation}\label{eq:cd2}
    	\begin{tikzcd}
    	\mathit{SH}^\ast_{S^1}(W_k;\mathbb{K}) \arrow[d,"\mathbf{B}"'] \arrow[r,"\tilde{\vartheta}"] &H_{3-\ast}^{S^1}(\mathcal{L}B\mathbb{Z}_p;\mathbb{K}) \arrow[d,"\mathbf{B}_\mathcal{L}"] \\
    	\mathit{SH}^{\ast-1}(W_k;\mathbb{K}) \arrow[r,"\vartheta"] &H_{4-\ast}(\mathcal{L}B\mathbb{Z}_p;\mathbb{K})
    	\end{tikzcd}
    \end{equation}
    where the cyclic group $\mathbb{Z}_p$ now plays the role of $\Gamma$. We take $\mathbb{K}$ to be a field of characteristic $p$. By \cref{lemma:cyclic-d}, there is a cyclic quasi-dilation $(\tilde{b},h)\in\mathit{SH}_{S^1}^1(W_k;\mathbb{K})\times\mathit{SH}^0(W_k;\mathbb{K})^\times$. By transferring using the diagram \eqref{eq:cd2}, we obtain a cyclic quasi-dilation $\left(\tilde{\vartheta}(\tilde{b}),\vartheta(h)\right)$ on the classifying space $B\mathbb{Z}_p$, which is just the infinite-dimensional lens space. Since $H_3(B\mathbb{Z}_p;\mathbb{Z})\cong\mathbb{Z}_p$, it follows from our choice of $\mathbb{K}$ that $H_3(B\mathbb{Z}_p;\mathbb{K})\cong H_3(L_i;\mathbb{K})$. Then a similar argument as in the proof of \cref{proposition:TOI} shows that $\vartheta(h)\neq1$. By rescaling, we actually have $\vartheta(h)\neq\alpha$ for any non-zero scalar $\alpha\in\mathbb{K}^\times$. It follows that under the isomorphism
    \[
    H_3(\mathcal{L}B\mathbb{Z}_p;\mathbb{K})\cong \mathbb{K}[\mathbb{Z}_p]\cong \mathbb{K}[z]/(z^p),
    \]
    $\vartheta(h)$ is mapped to a non-trivial $p$-th root of unity. Once again, the map $\vartheta$ comes from the dg algebra morphism $\mathcal{W}_k\rightarrow\mathbb{K}[\mathbb{Z}_p]$, and $h\in\mathit{SH}^0(W_k;\mathbb{K})^\times$ can be regarded as an element of $H^0(\mathcal{W}_k)^\times$. Since $\vartheta(h)$ is a non-trivial $p$-th root of unity, its preimage $h\in H^0(\mathcal{W}_k)^\times$ must satisfy $h^n=1$ for some multiple $n$ of $p$. By \cref{corollary:H0}, the order of any non-trivial torsion unit in $H^0(\mathcal{W}_k)$ divides $k$, so it follows that $p\mid k$.
    
    Next we consider the case when $L_i$ is diffeomorphic to a prism manifold. Then by definition $|\pi_1(L_i)| = 4mn$ and $\pi_1(L_i)/Z(\pi_1(L_i))\cong D_{2n}$ for some $m \geq 1$ and $n\geq 2$. Consider the surjective homomorphism $\pi_1(L_i)\rightarrow\pi_1(L_i)/Z(\pi_1(L_i))\cong D_{2n}$. Instead of \eqref{eq:cd2} we have a commutatve diagram
    \[
        \begin{tikzcd}
            \mathit{SH}^\ast_{S^1}(W_k;\mathbb{K}) \arrow[d,"\mathbf{B}"'] \arrow[r,"\tilde{\vartheta}"] &H_{3-\ast}^{S^1}(\mathcal{L}BD_{2n};\mathbb{K}) \arrow[d,"\mathbf{B}_\mathcal{L}"] \\
            \mathit{SH}^{\ast-1}(W_k;\mathbb{K}) \arrow[r,"\vartheta"] &H_{4-\ast}(\mathcal{L}BD_{2n};\mathbb{K})
    	\end{tikzcd}
    \]
    where $\mathbb{K}$ is a field of characteristic $p$, with $p$ being a prime factor of $n$. Known computations of the homologies of the dihedral groups show $H_3(BD_{2n};\mathbb{Z})\cong\mathbb{Z}_{2n}$, and hence $H_3(BD_{2n};\mathbb{K})\cong H_3(L_i;\mathbb{K})$. Arguing as above we see that $\vartheta(h)$ is mapped to a central unit in $\mathbb{K}[D_{2n}]$ that is not a scalar multiple of the identity. On the other hand, the ring structure of $\mathit{HH}^0(\mathbb{K}[D_{2n}])$ has been computed explicitly in \cite[Proposition 4.5]{th}, which in particular shows that when $n$ is odd, $\mathit{HH}^0(\mathbb{K}[D_{2n}])$ does not contain a torsion unit. Since the map $\vartheta$ is induced from the dg algebra map $\mathcal{W}_k\rightarrow\mathbb{K}[D_{2n}]$, and every non-trivial unit in $H^0(\mathcal{W}_k)$ is torsion, we see that the image of $\vartheta(h)$ must be torsion, which is a contradiction. Thus $n$ must be even.
    \end{proof}
    
    \subsection{Proof of the main result}\label{section:proof_main_result}
    
    Let $Q_0,Q_1\subset W_k$ be two exact Lagrangian spheres meeting along a circle $Z=Q_0\cap Q_1$ (that is possibly knotted in each sphere). There is a minimal model of the Fukaya $A_\infty$-algebra
    \[
    \mathcal{Q}_k^\mathbb{Z}\coloneqq \bigoplus_{i,j\in \{0,1\}}\mathit{CF}^\ast(Q_i,Q_j)
    \]
    over $\mathbb{Z}$ such that the Massey products are given by
    \[
    \mu^3(e,f,e)=\lambda f^\vee,\quad \mu^3(f,e,f)=\lambda'e^\vee
    \]
    for some $\lambda,\lambda'\in\mathbb{Z}$, where $e\in\mathit{HF}^1(Q_0,Q_1)$, $f\in\mathit{HF}^1(Q_1,Q_0)$ are degree $1$ generators, and $e^\vee\in\mathit{HF}^2(Q_1,Q_0)$, $f^\vee\in\mathit{HF}^2(Q_0,Q_1)$ are the corresponding Poincar\'{e} dual generators. The integers $\lambda$ and $\lambda'$ are independent of the choice of almost complex structures on $W_k$, so they are symplectic invariants. 
    
    Let $K \subset W_k(\kappa_0,\kappa_1)$, where $\kappa_0$ and $\kappa_1$ may be non-trivial, denote the exact Lagrangian submanifold obtained by performing Lagrangian surgery of $Q_0$ and $Q_1$ along their clean intersection. It is unique up to diffeomorphism.
    
    \begin{lemma}[{\cite[Lemma 2.4]{sw}}]\label{lemma:lambda}
    We have $\lambda = \lambda' = \pm k$. If $\lambda=0$, then $H^\ast(K;\mathbb{Z})\cong H^\ast(S^1\times S^2;\mathbb{Z})$. If $|\lambda|>0$, then $K$ is a homology lens space with $H^1(K;\mathbb{Z})\cong \mathbb{Z}_{|\lambda|}$.
    \qed
    \end{lemma}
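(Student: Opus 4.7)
The lemma breaks into two nearly independent claims: the identification of the Massey product coefficients $\lambda, \lambda'$ with $\pm k$, and the determination of $H^\ast(K;\mathbb{Z})$ from $\lambda$. I would handle them separately, and my plan combines an algebraic approach via the computations already carried out in the paper with a topological computation for the surgery.

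For the identification $\lambda = \lambda' = \pm k$, I would invoke the Ekholm--Lekili quasi-isomorphism $\mathcal{Q}_k \cong \Rhom_{\mathcal{W}_k}(\Bbbk,\Bbbk)$ mentioned right after \cref{theorem:contraction}, so that the Massey products on $Q_0 \cup Q_1$ become Koszul-dual invariants of the wrapped $A_\infty$-algebra $\mathcal{W}_k$. By \cref{theorem:ginzburg}, $\mathcal{W}_k$ is quasi-isomorphic to the Ginzburg dg algebra $\mathcal{G}_k$ with potential $w_k = efe(1+(fe+1)+\cdots+(fe+1)^{k-1})f$, whose lowest-order part contributes a coefficient equal to $k$ to the $(ef)^2$ term (equivalently to the $(fe)^2$ term, after cyclic permutation). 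Standard Koszul duality then reads the $\mu^3$ operation on the Ext-algebra of the semisimple ring $\Bbbk$ off this leading cubic piece of the potential, producing precisely $\mu^3(e,f,e) = \pm k\, f^\vee$ and $\mu^3(f,e,f) = \pm k\, e^\vee$. The symmetry $\lambda=\lambda'$ is forced by cyclic symmetry of the potential. Since the triple Massey product is a symplectic invariant, this recovers the same number no matter which representatives of $Q_0,Q_1$ one uses; an alternative, purely geometric route is to reduce by the Weinstein neighborhood theorem to a standard local model near $Z$ depending only on the framing parameter and count $J$-holomorphic triangles there.

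For the cohomology of $K$, I would proceed topologically. Since $K$ is the Lagrangian surgery of $Q_0 \cup Q_1$ along the clean intersection circle $Z$, as a smooth $3$-manifold it is obtained by gluing the two knot complements $Q_i \setminus \nu(\kappa_i(Z)) \cong S^3 \setminus \nu(\kappa_i(Z))$ along their common torus boundary via a map determined by the framing. Each knot complement has $H^0 \cong H^1 \cong \mathbb{Z}$, so the Mayer--Vietoris sequence reduces $H^\ast(K;\mathbb{Z})$ to the cokernel and kernel of an explicit $2\times 2$ integer matrix recording the identifications of the meridian/longitude generators in $H^1(T^2;\mathbb{Z})$. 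This matrix has determinant $\pm k$: when $k=0$ its kernel and cokernel are each $\mathbb{Z}$, yielding $H^\ast(K;\mathbb{Z}) \cong H^\ast(S^1 \times S^2;\mathbb{Z})$; when $|k|>0$ it is injective with cokernel $\mathbb{Z}_{|k|}$, giving $H^1(K;\mathbb{Z}) \cong \mathbb{Z}_{|k|}$ and making $K$ a homology lens space. Crucially, the knot types $\kappa_0, \kappa_1$ enter only through the abstract $H^1$ summands of the knot complements, not through the connecting homomorphism, so the computation depends only on $k = \pm \lambda$.

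The main obstacle is matching signs carefully across the three computations (the Koszul dual $\mu^3$, the framing integer $k$ normalized so that surgery on unknots produces $L(k,1)$ for $k\geq 1$, and the Mayer--Vietoris gluing matrix), so that all three sign conventions agree and one genuinely has $\lambda=\lambda'=\pm k$ with the same sign as the framing parameter fixed in \cref{section:plumbings}. Once the algebraic output of Koszul duality is pinned down as $\pm k$ and the Mayer--Vietoris determinant is shown to be $\pm k$ with compatible sign, the remaining assertions are immediate consequences.
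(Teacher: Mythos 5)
The paper does not actually prove this lemma: it is quoted from Smith--Wemyss \cite[Lemma 2.4]{sw} without proof, so the comparison is against that reference rather than an in-paper argument. Your Mayer--Vietoris computation of $H^\ast(K;\mathbb{Z})$ from the two knot complements glued along the boundary torus is correct, and your observation that the knot types enter only through the $H_1$ of the complements (infinite cyclic, generated by the meridian, with the longitude nullhomologous) is exactly why the answer depends only on the framing parameter.

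The genuine gap is in the first assertion, $\lambda=\lambda'=\pm k$. Your main route --- the Koszul duality $\mathcal{Q}_k\cong\Rhom_{\mathcal{W}_k}(\Bbbk,\Bbbk)$ combined with \cref{theorem:ginzburg} and reading $\mu^3$ off the length-four part $k\cdot efef$ of $w_k$ --- only applies to the plumbing along \emph{unknotted} circles, since \cref{theorem:ginzburg} computes the wrapped Fukaya category of $W_k$, not of $W_k(\kappa_0,\kappa_1)$ for knotted $\kappa_i$ (whose wrapped category depends on the knot complements and is certainly not $\mathcal{G}_k$). But the lemma is invoked in the proof of \cref{theorem:main} precisely for a knotted sub-plumbing $W_n(\kappa_0,\kappa_1)\subset W_k$, in order to conclude $n=k$; the knotted case is the whole point. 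Your fallback sentence (``reduce to a local model near $Z$ and count triangles there'') is where all the work lies: it is not automatic that the holomorphic polygons contributing to $\mu^3(e,f,e)$ and $\mu^3(f,e,f)$ are confined to a Weinstein neighborhood of $Z$, and no argument is offered. The way to close the gap --- and, as far as we can tell, the route taken in \cite{sw} --- is to couple the two halves that you treat as independent: identify $K$ with the mapping cone on $e$ in the Fukaya category via clean Lagrangian surgery, compute $HF^\ast(K,K)$ from the resulting twisted complex (whose first cohomology is $\mathbb{Z}$ or $\mathbb{Z}_{|\lambda|}$ according to $\lambda$), and compare with $HF^\ast(K,K)\cong H^\ast(K;\mathbb{Z})$ and the Mayer--Vietoris answer $\mathbb{Z}_{|k|}$ to conclude $|\lambda|=|k|$ for arbitrary knot types. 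Your Koszul-duality computation survives as a consistency check in the unknotted case, but as written the proposal does not establish the statement in the generality in which the paper uses it.
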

    
    With all the ingredients at hand, we now prove \cref{theorem:main}.
    
    \begin{proof}[Proof of \cref{theorem:main}]
    We assume $k\geq1$. Suppose that there is a Hamiltonian isotopy which changes the knot types of the unknot to $\kappa_0\colon Z'\hookrightarrow Q_0'$ and $\kappa_1\colon Z'\hookrightarrow Q_1'$, respectively, where $Q_i'$ is the Hamiltonian isotopy of $Q_i$ and $Z'=Q_0'\cap Q_1'$. Then there is a Weinstein embedding $W_n(\kappa_0,\kappa_1)\subset W_k$ (cf.\@ \cref{def:double_bubble}), and we assume at least one $\kappa_i$ is non-trivial. Since Hamiltonian isotopies preserve quasi-isomorphism classes of objects in the Fukaya category, $Q_i'$ is quasi-isomorphic to $Q_i$ in the compact Fukaya category $\mathcal{F}(W_k;\mathbb{Z})$. Hence \cref{lemma:lambda} implies $n=k$. 
    
    We first consider the case where one of $\kappa_i$'s is trivial (without loss of generality assume $\kappa_1$ is trivial). Then the Lagrangian surgery $K$ of $Q_0'$ and $Q_1'$ is given by the Dehn surgery on the knot $\kappa_0$ with integer slope $k$. It is known that $K$ has a prime decomposition with at most three summands \cite[Corollary 5.3]{jh}. Moreover, the same result further says that if there actually were three summands, then one of them would have to be an integral homology sphere. However, this is not possible by \cref{proposition:TOI}. Thus $K$ has at most two summands. By \cref{lemma:aspherical}, none of these summands can be aspherical. Also, by our assumption that $k\geq1$, \cref{lemma:lambda} implies $H^1(K;\mathbb{Z})\cong\mathbb{Z}_k$, which means that there can be no $S^1\times S^2$ summand either. It follows that either $K$ itself is a spherical space form, or both of the summands in the prime decomposition of $K$ are spherical space forms. By \cref{proposition:TOI,proposition:lensprism} it then follows that $K$ is one of the following:
    \begin{enumerate}[(i)]
    	\item a lens space such that every prime factor in $|\pi_1(K)|$ also divides $k$,
        \item a prism manifold with $\pi_1(K)/Z(\pi_1(K))\cong D_{2n}$ for some even number $n$,
    	\item a connected sum of two spherical manifolds as in (i) and (ii).
    \end{enumerate}
    
    We first show that $K$ cannot be a prism manifold. If $K$ was diffeomorphic to a prism manifold, then a presentation of its fundamental group would be given by
    \[
    \pi_1(K) = \langle x,y \mid xyx^{-1} = y^{-1}, \; x^{2m} = y^n\rangle,
    \]
    where $n$ is even by \cref{proposition:lensprism}. We note however that this implies that its abelianization $H_1(K;\mathbb{Z})$ is not cyclic, contradicting \cref{lemma:lambda}. This shows that if $K$ is irreducible, then it must be a lens space. One can then apply the deep result of Kronheimer--Mrowka--Ozsv\'{a}th--Szab\'{o} \cite[Theorem 1.1]{kmos} to conclude that $\kappa_0$ must be the unknot.
    
    It remains to show that $K$ cannot be the connected sum of two spherical manifolds as in (i) and (ii). First notice that since $K$ is reducible, it follows from the proof of \cite[Theorem 1]{gl} that the only possibility is that $K$ is the connected sum of two (non-trivial) lens spaces $K_0$ and $K_1$. As in the proof of \cref{proposition:TOI}, we consider the map \eqref{eq:vit} and its $S^1$-equivariant analog
    \[
    \tilde{\upsilon}\colon\mathit{SH}_{S^1}^\ast(W_k;\mathbb{K})\longrightarrow H_{3-\ast}^{S^1}(\mathcal{L}B\pi_1(K);\mathbb{K}).
    \]
    Under $\tilde{\upsilon}$, the class $\tilde{b}\in\mathit{SH}_{S^1}^1(W_k;\mathbb{K})$ in the cyclic quasi-dilation gets mapped to a class $\tilde{\upsilon}(\tilde{b})\in H_2^{S^1}(\mathcal{L}B\pi_1(K);\mathbb{K})$, whose image under the marking map $\mathbf{B}_\mathcal{L}$ gives a central unit $\upsilon(h)$ in the fundamental group algebra $\mathbb{K}[\pi_1(K)]$. As $K = K_0 \# K_1$, we have 
    \begin{equation}\label{eq:fp}
    \pi_1(K)\cong\pi_1(K_0)\ast\pi_1(K_1)\cong\mathbb{Z}_m\ast\mathbb{Z}_n
    \end{equation}
    where $m$ and $n$ are coprime, where $\ast$ denotes the free product. Let $p$ be any prime number dividing $m$ or $n$, and let $\mathbb{K}$ be a field of characteristic $p$. It follows that
    \[
    H_3(B\pi_1(K);\mathbb{K})\cong H_3\left(B\pi_1(K_0);\mathbb{K}\right)\oplus H_3(B\pi_1(K_1);\mathbb{K})\cong H_3(K;\mathbb{K}). 
    \]
    The same argument as in the proof of \cref{proposition:TOI} then implies that $\upsilon(h)\neq1$ for any cyclic quasi-dilation $(\tilde{b},h)$ over $\mathbb{K}$. It means that under the isomorphism $H_3(\mathcal{L}B\pi_1(K);\mathbb{K})\cong Z(\mathbb{K}[\pi_1(K)])$, $\upsilon(h)$ defines a central unit of $\mathbb{K}[\pi_1(K)]$ that is not the identity. However, it follows from \eqref{eq:fp} that the center of the group algebra $\mathbb{K}[\pi_1(K)]$ is trivial, see e.g.\@ \cite[Lemma 6.2]{ail}. Thus $\upsilon(h)=1$ up to rescaling of $\tilde{b}$, and we get a contradiction.
    
    We can assume from now on that both $\kappa_0$ and $\kappa_1$ are non-trivial. Then the surgery $K$ is irreducible and contains an incompressible torus by \cite[Proposition 2.3]{hkmp}. Since $K\subset W_k$, it admits a cyclic quasi-dilation over a field $\mathbb{K}$ of characteristic $0$ by Viterbo functoriality (and its $S^1$-equivariant analog), see \cite[Section 5.1]{yle}. By \cref{proposition:cover}, it is finitely covered by $S^1\times\Sigma_g$, where $g\geq1$, or a repeated connected sum of $S^1\times S^2$. In the former case, $K$ must be aspherical, and we get a contradiction by \cref{lemma:aspherical}. Thus $K$ is finitely covered by $\#_r(S^1\times S^2)$ for some $r\geq0$. Since $H^1(K;\mathbb{Z})=\mathbb{Z}_k$ and $K$ is irreducible, it must be spherical, which contradicts the existence of an incompressible torus.
    \end{proof}
    
    We end this paper by remarking on the differences between our argument in the general case with the argument of Ganatra--Pomerleano in the case $k=1$ \cite[Section 6.4]{gp}. First, the spherical summands of types \textbf{T}, \textbf{O}, and \textbf{I} in the prime decompositions of exact Lagrangians $L\subset W_1$ are ruled out in \cite{gp} using the fact that $W_1$ admits a dilation over characteristic $3$, therefore by \cite[Lemma 5.17]{gp}, the order of the fundamental group of this spherical summand is not divisible by $3$. This argument is not applicable in the general case since $W_k$ does not admit a dilation (or even a cyclic dilation) over $\mathbb{F}_3$ if $3|k$. On the other hand, it is not hard to see that our argument in \cref{proposition:TOI} actually proves that no spherical exact Lagrangians of type \textbf{T}, \textbf{O}, or \textbf{I} can exist in a Liouville manifold admitting a cyclic quasi-dilation over fields of characteristics $2$ and $3$. In \cite{gp}, the nonexistence of exact Lagrangian prism manifolds and exact Lagrangians $L\subset W_1$ that are diffeomorphic to a connected sum of two spherical manifolds is proved by equipping $L$ with different local systems, which gives rise to $|H_1(L;\mathbb{Z})|$ orthogonal objects in the Fukaya category $\mathcal{F}(W_1;\mathbb{F}_3)$. One can then derive a contradiction from Seidel's work \cite{psd}, which in particular implies that exact Lagrangian $\mathbb{K}$-homology spheres which are orthogonal in the Fukaya category of a Liouville manifold with dilations (over $\mathbb{K}$) cannot be squeezed into the same homology class. Unfortunately, this argument does not generalize to Liouville manifolds with cyclic (quasi-)dilations, see \cite[Section 5.3]{yle} for an explanation. Because of this, we did not manage to prove the nonexistence of exact Lagrangian prism manifolds or connected sums of spherical $3$-manifolds in $W_k$ for a general $k$, our argument only applies when $L$ is a Dehn surgery, which turns out to be enough for our purposes.

    \bibliographystyle{alpha}
    \bibliography{ref.bib}

    \Addresses
    
    \end{document}